\newcommand*{\N}[0]{\mathbb{N}}
\newcommand*{\R}[0]{\mathbb{R}}
\newcommand*{\GenDist}[0]{P_{p,R^{(n)}}^{(n)}}
\newcommand*{\X}[0]{X_p^{(n)}}
\newcommand*{\Sph}[2]{\mathbb{S}_{#1}^{#2}}
\newcommand*{\Kug}[2]{\mathbb{B}_{#1}^{#2}}
\newcommand*{\Colon}[0]{\;\mathord{:}\;}
\newcommand*{\Ratio}[2]{\mathscr{R}_{#1}^{#2}}
\newcommand*{\ee}[0]{\mathrm{e}}
\newcommand*{\KiVert}[1]{\xrightarrow[#1]{\text{\upshape d}}}
\newcommand*{\dKol}[0]{d_{\text{\upshape Kol}}}
\newcommand*{\dd}[0]{\mathrm{d}}
\newcommand*{\TV}[0]{\text{\upshape TV}}
\newcommand*{\GlVert}[0]{\mathrel{\stackrel{\text{\upshape d}}{=}}}
\newcommand*{\Gaussp}[1]{\gamma_{#1}}
\renewcommand{\epsilon}{\varepsilon}
\renewcommand{\phi}{\varphi}
\DeclareMathOperator{\prb}{\mathbb{P}}
\DeclareMathOperator{\Erw}{\mathbb{E}}
\DeclareMathOperator{\Var}{\mathbb{V}}
\DeclareMathOperator{\cov}{Cov}
\DeclareMathOperator{\I}{\mathbb{I}}
\DeclareMathOperator{\U}{Unif}
\DeclareMathOperator{\Borel}{\mathcal{B}}
\DeclareMathOperator{\Leb}{\lambda}
\DeclareMathOperator{\Surf}{\sigma}
\DeclareMathOperator{\Keg}{\mu}
\DeclareMathOperator{\Normal}{\mathcal{N}}
\newtheorem{thm}{Theorem}[section]
\newtheorem{cor}[thm]{Corollary}
\newtheorem{df}[thm]{Definition}
\newtheorem{proposition}[thm]{Proposition}
\newtheorem{lem}[thm]{Lemma}
\newtheorem{thmalpha}{Theorem}
\newtheorem{AssumptionA}{Assumption}
\theoremstyle{definition}
\newtheorem{rem}[thm]{Remark}
\begin{document}

\title{\bf The Maclaurin inequality through the probabilistic lens}
\medskip

\author{Lorenz Fr\"uhwirth, Michael Juhos, and Joscha Prochno}
\date{}

%\thanks{~}

%\keywords{}
%\subjclass{}
%% NB There should be only one primary classification, and zero or
%more secondary classifications.

\maketitle

\begin{abstract}
In this paper we take a probabilistic look at Maclaurin's inequality, which is a refinement of the classical AM--GM mean inequality. In a natural randomized setting, we obtain limit theorems and show that a reverse inequality holds with high probability. The form of Maclaurin's inequality naturally relates it to U-statistics. More precisely, given $x_1, \dotsc, x_n, p \in(0,\infty)$ and \(k \in \N\) with \(k \leq n\), let us define the quantity $$S_{k, p}^{(n)} = \Bigl( \tbinom{n}{k}^{-1} \sum_{1 \leq i_1 < \dotsb < i_k \leq n} x_{i_1}^p \dotsm x_{i_k}^p \Bigr)^{1/(k p)}.$$ Then as a consequence of the classical Maclaurin inequalities, we know that \(S_{k_1}^{(n)} \geq S_{k_2}^{(n)}\) for \(k_1 < k_2\). In the present article we consider the ratio
\[
\mathscr{R}_{k_1, k_2, p}^{(n)} := \frac{S_{k_2, p}^{(n)}}{S_{k_1, p}^{(n)}},
\]
evaluated at a random vector \((X_1, \dotsc, X_n)\) sampled either from the normalized surface measure on the \(\ell_p^n\)-sphere or from a distribution generalizing both the uniform distribution on the \(\ell_p^n\)-ball and the cone measure on the \(\ell_p^n\)-sphere; by the Maclaurin inequality, we always have \(\mathscr{R}_{k_1, k_2, p}^{(n)} \leq 1\). We derive central limit theorems for \(\mathscr{R}_{k_1, k_2, p}^{(n)}\) and \(\mathscr{R}_{k_1, n, p}^{(n)}\) as well as Berry--Esseen bounds and a moderate deviations principle for \(\mathscr{R}_{k_1, n, p}^{(n)}\), keeping \(k_1\), \(k_2\) fixed, in order to quantify the set of points where \(\mathscr{R}_{k_1, k_2, p}^{(n)} > c\) for \(c \in (0, 1)\), i.e., where the Maclaurin inequality is reversed up to a factor. The present aricle partly generalizes results concerning the AM--GM inequality obtained by Kabluchko, Prochno, and Vysotsky (2020), Th\"ale (2021), and Kaufmann and Th\"ale (2023+).
\end{abstract}

% % % % % % % % % % % % % % % % % % % % % %
\section{Introduction and main results}
% % % % % % % % % % % % % % % % % % % % % %

\subsection{Introduction}

There are a number of classical inequalities frequently used throughout mathematics. A natural question is then to characterize the equality cases or to determine to what degree a reverse inequality may hold. The latter shall be the main focus here, and we begin by motivating and illustrating the approach in the case of the classical inequality of arithmetic and geometric means (AM\==GM inequality), which has attracted quite some attention in the past decade. Let us recall that the AM\==GM inequality states that for any finite number of non-negative real values, the geometric mean is less than or equal to the arithmetic mean. More precisely, for all $n \in \N$ and $x_1, \dotsc, x_n \geq 0$ it holds that
\begin{equation*}
	\label{EqClassicalArithGeoIneq}
	\Bigl( \prod_{i=1}^n x_i \Bigr)^{1/n} \leq  \frac{1}{n} \sum_{i=1}^n x_i
\end{equation*}
and equality holds if and only if $x_1= \dotsb = x_n$. Setting $y_i := \sqrt{x_i}$ for $i=1, \dotsc,n$, we obtain 
\begin{equation*}
	\Bigl( \prod_{i=1}^n y_i \Bigr)^{1/n} \leq  \Bigl( \frac{1}{n} \sum_{i=1}^n y_i^2 \Bigr)^{1/2}.
\end{equation*}
For a point $y$ in the Euclidean unit sphere $\Sph{2}{n-1} := \{x=(x_i)_{i=1}^n\in\R^n \Colon \sum_{i=1}^n \lvert x_i\rvert^2 = 1\}$, this leads to the estimate
\begin{equation*}
	\Bigl( \prod_{i=1}^{n} \lvert y_i\rvert\Bigr)^{1/n} \leq \frac{1}{\sqrt{n}}.
\end{equation*}
It is natural to ask whether this inequality can be reversed for a ``typical'' point in $\Sph{2}{n-1}$, and Gluskin and Milman~\cite[Proposition~1]{GluMil} showed that, for any $t > 0$,
\begin{equation*}
	\label{EqGluMilResult}
	\Surf^{(n)}_2 \Bigl( \Bigl\{ x \in \Sph{2}{n-1} \Colon \Bigl( \prod_{i=1}^{n} \lvert x_i\rvert \Bigr)^{1/n}  \geq t \cdot \frac{1}{\sqrt{n}} \Bigr\} \Bigr) \geq 1- (1.6 \sqrt{t})^n,
\end{equation*}
where $\Surf^{(n)}_2$ denotes the unique rotationally invariant probability measure on $\Sph{2}{n-1}$.
For large dimensions $n \in \N$ this means that with high probability, we can reverse the AM\==GM inequality up to a constant. The problem was revisited later by Aldaz~\cite[Theorem 2.8]{Aldaz} who showed that for all $\epsilon > 0$ and $k> 0$ there exists an $N \in \N$ such that for every $n \geq N$,
\begin{equation*}
	\label{EqAldazResult}
	\Surf^{(n)}_2\Biggl( \Biggl\{  x \in \Sph{p}{n-1} \Colon \frac{(1- \epsilon) \ee^{ -\frac{1}{2} ( \gamma + \log 2)} }{\sqrt{n}}  <  \Bigl( \prod_{i=1}^{n} \lvert x_i\rvert \Bigr)^{1/n} < \frac{(1 + \epsilon) \ee^{ -\frac{1}{2} ( \gamma + \log 2)} }{\sqrt{n}} \Biggr\} \Biggr) \geq 1- \frac{1}{n^k},
\end{equation*}
where $\gamma=0.5772\ldots$ is Euler's constant. The previous works then motivated Kabluchko, Prochno, and Vysotsky~\cite{ArithGeoIneq} to study the asymptotic behavior of the $p$-generalized AM\==GM inequality, which states that for any $p \in (0, \infty)$, $n \in \N$, and $(x_i)_{i=1}^n \in \R^n$, 
\begin{equation*}
	\label{EqIntrpgenArithGeoIneq}
	\Bigl( \prod_{i=1}^{n} \lvert x_i\rvert \Bigr)^{1/n} \leq \Bigl( \frac{1}{n} \sum_{i=1}^{n} \lvert x_i\rvert^p \Bigr)^{1/p}.
\end{equation*} 
The authors analyzed the quantity
\begin{equation*}
	\label{EqRatioArithGeo}
	\Ratio{}{(n)} := \frac{\bigl( \prod_{i=1}^{n} \lvert x_i\rvert \bigr)^{1/n}  }{\lVert x\rVert_p},
\end{equation*}
where $ \lVert x\rVert_p := \bigl( \frac{1}{n} \sum_{i=1}^{n} \lvert x_i\rvert^p  \bigr)^{1/p} $ for $x=(x_i)_{i=1}^n\in\R^n$. Similar to the case of the classical AM\==GM inequality above it is natural to consider points $x \in \R^n$ that are in some sense uniformly distributed on the $\ell_p^n$ unit sphere $\Sph{p}{n-1}$ or the $\ell_p^n$ unit ball $\Kug{p}{n}$ respectively, where
\begin{equation*}
	\Kug{p}{n} := \bigl\{ x \in \R^n \Colon \lVert x \rVert_p \leq 1 \bigr\} \quad \text{and} \quad \Sph{p}{n-1} := \bigl\{ x \in \R^n \Colon \lVert x \rVert_p = 1 \bigr\}.
\end{equation*} 
In \cite[Theorem 1.1]{ArithGeoIneq} it is shown that, for a constant $m_p\in(0,\infty)$ depending only on $p$,
\begin{equation*}
	\sqrt{n} \bigl( \ee^{- m_p} \Ratio{}{(n)} -1   \bigr), \quad n \in \N,
\end{equation*}
converges to a centered normal distribution with known variance, and in \cite[Theorem 1.3]{ArithGeoIneq} a large deviations principle (LDP) for the sequence $(\Ratio{}{(n)})_{n \in \N }$ is proved (see Section~\ref{SecLDPProb} for the definition of an LDP).

The work \cite{ArithGeoIneq} of Kabluchko, Prochno, and Vysotsky was then complemented by Th\"ale~\cite{Th2021}, who obtained a Berry\--Esseen type bound and a moderate deviations principle for a wider class of distributions on the $\ell_p^n$ balls (first defined in~\cite{Bartheetal}). In a subsequent paper, Kaufmann and Th\"ale~\cite[Theorem 1.1]{KaufThaele} were able to identify the sharp asymptotics of $(\Ratio{}{(n)})_{n \in \N }$.

Quite recently, Fr\"uhwirth and Prochno~\cite{HoelderIneq} approached another fundamental inequality in analysis, namely H\"older's inequality, by probabilistic means. More precisely, they studied the asymptotic behavior of the classical H\"older ratio $\Ratio{p,q}{(n)}$ defined by
\begin{equation*}
\Ratio{p,q}{(n)} := \frac{ \sum_{i=1}^{n} \lvert x_i y_i\rvert}{\lVert x\rVert_p \lVert y\rVert_q},\quad n\in\N,
\end{equation*}
where $x,y \in \R^n$ and $p,q \in (1, \infty) $ with $ \frac{1}{p}+ \frac{1}{q}=1$. The points $x,y \in \R^n$ were drawn at random according to the uniform distribution on the $\ell_p^n$ and $\ell_q^n$ unit ball, respectively. In~\cite{HoelderIneq}, a central limit theorem and large and moderate deviations principles are proved for the sequence $(\Ratio{p,q}{(n)})_{n \in \N}$.

The aim of this article is to study a refined version of the AM\==GM inequality, known as Maclaurin's inequality. It is another classical inequality in analysis and states that if for a point $x = (x_i)_{i = 1}^n \in \R^n$,  \(k \in \{1, 2, \dotsc, n\}\), and $p \in (0, \infty)$ one defines the \emph{\(p\)\-/generalized \(k\)\textsuperscript{th} symmetric mean}
\begin{equation*}
S_{k, p}^{(n)} := S_{k, p}^{(n)}(x) := \left ( \binom{n}{k}^{-1} \sum_{1 \leq i_1 < \dotsb < i_{k} \leq n}  \lvert x_{i_1}\rvert^p \dotsm \lvert x_{i_{k}} \rvert^p \right)^{1/(k p)},
\end{equation*}
then one has that
\begin{equation}
\label{EqEulerMclIneq}
 S_{k,p}^{(n)} \geq S_{k + 1, p}^{(n)},
\end{equation}
where $1 \leq k \leq n-1$. By iterating Equation~\eqref{EqEulerMclIneq} over all $1 \leq k \leq n-1$, one recovers the $p$-generalized AM\==GM inequality, i.e., we have 
\begin{equation*}
\left( \prod_{i=1}^{n} \lvert x_i \rvert \right)^{1/n} = S_{n,p}^{(n)} \leq S_{n-1,p}^{(n)} \leq \dotsb \leq S_{1,p}^{(n)} = \left( \sum_{i=1}^{n} \lvert x_i\rvert^p \right)^{1/p}.
\end{equation*}
This motivates the study of the Maclaurin ratio 
\begin{equation}
\label{EqDefRatioEulerMclaurin}
\Ratio{k_1, k_2, p}{(n)} := \Ratio{k_1, k_2, p}{(n)}(\X) := \frac{S_{k_2,p}^{(n)}}{S_{k_1,p}^{(n)}},
\end{equation}
with $1 \leq k_1 < k_2 \leq n$ and $\X \in \R^n$ a random point; note \(\Ratio{k_1, k_2, p}{(n)} \in [0, 1]\) because of~\eqref{EqEulerMclIneq}. As already explained before, it is natural to consider $\X \sim \U(\Kug{p}{n})$ or $\X \sim \U(\Sph{p}{n-1}) $. The uniform distribution on $\Kug{p}{n}$ is given by the normalized Lebesgue measure, whereas there are two meaningful uniform distributions on $\Sph{p}{n-1}$, namely the surface probability measure and the cone probability measure. In the present article, we are going to treat \(\U(\Kug{p}{n})\) and the cone measure on \(\Sph{p}{n - 1}\) in a unified way by introducing a family of distributions denoted by \(\GenDist\): this is defined to be the distribution of the random vector \(R^{(n)} Y\), where \(R^{(n)}\) and \(Y\) are independent, \(R^{(n)}\) almost surely takes values in \((0, \infty)\), and \(Y\) is distributed according to the cone measure on \(\Sph{p}{n - 1}\); clearly \(R^{(n)}\) is the radial part of the distribution, while \(Y\) is its directional part.

%%%%%%%%%%%%%%%%%%%%%%%%%%%%%%%%%%%
%%%%%%%%%%%%%%%%%%%%%%%%%%%%%%%%%%%
\subsection{Main results\---limit theorems for the Maclaurin ratio}
\label{SubSectionMainResults}
%%%%%%%%%%%%%%%%%%%%%%%%%%%%%%%%%%%
%%%%%%%%%%%%%%%%%%%%%%%%%%%%%%%%%%%

Let us now present our main results. For the sake of brevity, we first introduce the following general assumption on our random quantity.

\begin{AssumptionA}
\label{AssA}
Let $p \in [1, \infty) $ and $\X \in \R^n$ be a random vector so that either $\X \sim \GenDist$ or $\X \sim \Surf_p^{(n)}$, where $\Surf_p^{(n)}$ is the surface probability measure on $\Sph{p}{n-1}$.
\end{AssumptionA}

In the statements of the main results, the following quantities will appear. For a real\-/valued random variable $X$ distributed according to the $p$\=/Gaussian distribution (the definition of which is given in Section~\ref{sec:meas_lp_spaces}), we set
\begin{equation}\label{DefRelQuant}
\begin{split}
m_p &:= \Erw[\log\lvert X\rvert] \in \Bigl[ -1, -\frac{1}{2} \Bigr],\\
s_p^2 &:= \Var\Bigl[ \log\lvert X\rvert - \frac{1}{p} \lvert X\rvert^p \Bigr] \in (0, \infty),\\
\rho_p^2 &:= \frac{1}{4} \Var\bigl[ \bigl( \lvert X \rvert^p - 1 - p \bigr)^2 \bigr] \in (0, \infty).
\end{split}
\end{equation}

%%%%%%%%%%%%%%%%%%%%%%%%%%%%%%%%%%%
%%%%%%%%%%%%%%%%%%%%%%%%%%%%%%%%%%%
\subsubsection{Central limit theorems and Berry\--Esseen bounds for $\Ratio{k_1, k_2, p}{(n)}$}
%%%%%%%%%%%%%%%%%%%%%%%%%%%%%%%%%%%
%%%%%%%%%%%%%%%%%%%%%%%%%%%%%%%%%%%

\begin{thmalpha}
\label{ThmClt1}
Let $k \in \N$ be fixed, let $p \in [1, \infty) $, and for each \(n \geq k\), we let $ \X $ be a random vector in $\R^n$ so that Assumption~\ref{AssA} holds. Then, the Maclaurin ratio $\Ratio{k, n, p}{(n)}$ satisfies 
\begin{equation*}
\sqrt{n} \left( \ee^{- m_p} \Ratio{k, n, p}{(n)} -1 \right) \KiVert{n \to \infty} Z,
\end{equation*}
where $ Z \sim \Normal(0, s_p^2) $.
\end{thmalpha}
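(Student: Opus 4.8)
The plan is to pass to the classical probabilistic representation of the relevant measures through $p$-Gaussian coordinates, take logarithms so that the statement becomes a central limit theorem for an average of i.i.d.\ random variables, and to treat the nonlinearity by a first-order H\'ajek-type expansion. First I would use that $\Ratio{k,n,p}{(n)}(\X)$ is invariant under the scaling $\X\mapsto\lambda\X$, $\lambda>0$, because $S_{k,p}^{(n)}$ and $S_{n,p}^{(n)}$ are both positively $1$-homogeneous: when $\X\sim\GenDist$, this lets me discard the radial factor $R^{(n)}$ and assume that $\X$ is distributed according to the cone measure on $\Sph{p}{n-1}$ (the surface measure is dealt with at the end). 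Using the standard representation of the cone measure, $\X\GlVert(g_1,\dotsc,g_n)/\lVert(g_1,\dotsc,g_n)\rVert_p$ with $g_1,g_2,\dotsc$ i.i.d.\ $p$-Gaussian, and cancelling the common factor $\lVert(g_1,\dotsc,g_n)\rVert_p$, one is left with
\begin{equation*}
\Ratio{k,n,p}{(n)}\GlVert\frac{\bigl(\prod_{i=1}^{n}\lvert g_i\rvert\bigr)^{1/n}}{U_n^{1/(kp)}},\qquad U_n:=\tbinom{n}{k}^{-1}\sum_{1\le i_1<\dotsb<i_k\le n}\lvert g_{i_1}\rvert^p\dotsm\lvert g_{i_k}\rvert^p .
\end{equation*}

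Next I would take logarithms, giving $\log\Ratio{k,n,p}{(n)}\GlVert\frac1n\sum_{i=1}^{n}\log\lvert g_i\rvert-\frac1{kp}\log U_n$, and observe that $U_n$ is a $U$-statistic of fixed degree $k$ with square-integrable kernel $(t_1,\dotsc,t_k)\mapsto\prod_{j=1}^{k}\lvert t_j\rvert^p$; since $\Erw[\lvert g\rvert^p]=1$ for a $p$-Gaussian $g$, its mean is $1$ and its first H\'ajek projection is $t\mapsto\lvert t\rvert^p-1$. The Hoeffding decomposition then yields $\sqrt n(U_n-1)=\frac k{\sqrt n}\sum_{i=1}^{n}(\lvert g_i\rvert^p-1)+o_{\prb}(1)$, and in particular $U_n=1+O_{\prb}(n^{-1/2})$, so that $\log U_n=(U_n-1)+O_{\prb}(n^{-1})$. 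Substituting this and using $\Erw[\log\lvert g\rvert]=m_p$ and $\Erw[\lvert g\rvert^p]=1$, the $\sqrt n$-order deterministic terms cancel and one gets
\begin{equation*}
\sqrt n\bigl(\log\Ratio{k,n,p}{(n)}-m_p\bigr)=\frac1{\sqrt n}\sum_{i=1}^{n}\Bigl(\log\lvert g_i\rvert-\tfrac1p\lvert g_i\rvert^p-\Erw\bigl[\log\lvert g\rvert-\tfrac1p\lvert g\rvert^p\bigr]\Bigr)+o_{\prb}(1).
\end{equation*}
The summands are i.i.d., centered, and square-integrable (the logarithmic singularity of $\log\lvert g\rvert$ at the origin is square-integrable and $\lvert g\rvert^p$ has moments of all orders), so the classical central limit theorem gives convergence to $\Normal(0,s_p^2)$; applying the delta method to $\ee^{-m_p}\Ratio{k,n,p}{(n)}=\exp(\log\Ratio{k,n,p}{(n)}-m_p)$ (a first-order expansion of $\exp$ together with Slutsky's lemma) then produces the claimed statement, first for the cone measure and hence for every $\X\sim\GenDist$.

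It remains to treat $\X\sim\Surf_p^{(n)}$, which I would do by a comparison argument. The surface probability measure is absolutely continuous with respect to the cone measure on $\Sph{p}{n-1}$ with density proportional to $x\mapsto(\sum_{i=1}^{n}\lvert x_i\rvert^{2(p-1)})^{1/2}$; under the Gaussian representation this density is $(\sum_{i=1}^{n}\lvert g_i\rvert^{2(p-1)})^{1/2}\,\lVert(g_1,\dotsc,g_n)\rVert_p^{-(p-1)}$, normalized by its expectation. By the law of large numbers (here the hypothesis $p\ge1$ guarantees $\Erw[\lvert g\rvert^{2(p-1)}]<\infty$) both $\frac1n\sum_{i=1}^{n}\lvert g_i\rvert^{2(p-1)}$ and $\frac1n\sum_{i=1}^{n}\lvert g_i\rvert^p$ concentrate, so this density converges to $1$ in probability and, being a probability density, also in $L^1$ by Scheff\'e's lemma; hence $\TV$ between $\Surf_p^{(n)}$ and the cone measure on $\Sph{p}{n-1}$ tends to $0$, and the weak limit is unaffected. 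The point I expect to be the main obstacle is the quantitative linearization of $U_n$ at the scale $o_{\prb}(n^{-1/2})$: this is exactly what forces the $\log U_n$-term to contribute only through an i.i.d.\ average, and it is the single place where the $U$-statistic structure really has to be exploited, everything downstream being a routine application of the classical CLT and an elementary comparison of the two uniform measures on the sphere.
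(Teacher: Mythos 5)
Your proof is correct, and it follows essentially the same route as the paper's direct (alternate) proof of Theorem~\ref{ThmClt1}: use the scale invariance of the Maclaurin ratio together with the Schechtman--Zinn representation to reduce to a vector of i.i.d.\ $p$-Gaussians, linearize, exploit the asymptotic normality of the $U$-statistic $U_n(\phi^{(k)})$, and transfer the resulting CLT from the cone measure to the surface measure. (The paper's primary proof simply reads Theorem~\ref{ThmClt1} off the Berry--Esseen bound of Theorem~\ref{ThmBE1}; the direct ``alternate proof'' in Section~\ref{SectionProofs} is what your argument parallels.)

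The differences are in packaging. You linearize on the logarithmic scale, $\log \Ratio{k,n,p}{(n)} = \tfrac1n\sum\log\lvert g_i\rvert - \tfrac1{kp}\log U_n$, and undo the $\log$ at the end via the delta method; the paper instead Taylor-expands $(x,y)\mapsto \ee^x(1+y)^{-1/(kp)}$ directly (Lemma~\ref{LemTaylorRatio}) and kills the remainder with Lemma~\ref{LemRestgliedNull}. You make the H\'ajek projection of $U_n(\phi^{(k)})$ explicit and then apply the classical CLT to an i.i.d.\ sum; the paper invokes the bivariate CLT for $U$-statistics (Proposition~\ref{PropUStat}, part~\ref{USatCLT}) on the pair $\bigl(U_n(\phi^{(0)}), U_n(\phi^{(k)})\bigr)$. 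Both of these compute the same limiting variance $s_p^2$; neither has a real advantage for a plain CLT, though the explicit H\'ajek projection gives a self-contained, more elementary account of why the $U$-statistic behaves like an i.i.d.\ average at scale $n^{-1/2}$. For the surface measure, the paper cites the quantitative Naor--Romik bound $\lVert\Keg_p^{(n)}-\Surf_p^{(n)}\rVert_{\TV}\le c_p/\sqrt n$ (Proposition~\ref{PropSurfMeasConeMeasClose}), which is needed anyway for the Berry--Esseen statement; your LLN\,+\,Scheff\'e argument only delivers a qualitative $\lVert\Keg_p^{(n)}-\Surf_p^{(n)}\rVert_{\TV}\to 0$, which is enough here. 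One small point in that last step should be tightened: writing the normalized density as $D_n = B_n/\Erw[B_n]$ with $B_n = \bigl(\tfrac1n\sum\lvert g_i\rvert^{2p-2}\bigr)^{1/2}\bigl(\tfrac1n\sum\lvert g_i\rvert^p\bigr)^{-(p-1)/p}$, the LLN gives $B_n\to\Erw[\lvert g\rvert^{2p-2}]^{1/2}$ in probability, but to conclude $D_n\to1$ in probability (before invoking the subsequence form of Scheff\'e) you still need $\Erw[B_n]\to\Erw[\lvert g\rvert^{2p-2}]^{1/2}$, i.e.\ a uniform-integrability step that the proposal glosses over. This is routine to fill in (or one simply cites Naor--Romik), so it is a minor omission rather than a flaw.
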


We are also able to provide a quantitative version of Theorem~\ref{ThmClt1}, i.e., a Berry\--Esseen type result. For real-valued random variables \(X\) and \(Y\) on a common probability space, their Kolmogorov distance is defined by
\begin{equation}
    \label{EqKolDist}
    \dKol(X,Y) := \sup_{t \in \R} \bigl\lvert \prb[X \leq t] - \prb[Y \leq t] \bigr\rvert.
\end{equation}

\begin{thmalpha}
\label{ThmBE1}
Let $k \in \N$ be fixed, let $p \in [1, \infty) $, and for each \(n \geq k\), let $ \X $ be a random vector in $\R^n$ so that Assumption~\ref{AssA} holds. Then there exists a constant $c_{k, p} \in (0, \infty)$ such that, for all $n \geq k$,
\begin{equation*}
\dKol\left( \sqrt{n} \left( \ee^{- m_p} \Ratio{k, n, p}{(n)} -1 \right), Z \right) \leq \frac{c_{k, p} \sqrt{\log(n)}}{\sqrt{n}},
\end{equation*}
where $Z \sim \Normal(0, s_p^2)$. 
\end{thmalpha}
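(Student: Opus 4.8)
The plan is to reduce the statement to a one-dimensional Berry–Esseen estimate for a sum of i.i.d.\ random variables via the standard probabilistic representation of $\GenDist$ and $\Surf_p^{(n)}$ in terms of $p$-Gaussian coordinates, and then to control all the error terms arising from this reduction in Kolmogorov distance. First I would invoke the representation: if $G_1,\dotsc,G_n$ are i.i.d.\ $p$-Gaussian and we set, for the cone-measure/ball case, $\X = R^{(n)}\,G/\lVert G\rVert_p$ (and similarly with the appropriate surface-measure density reweighting in the other case), then the ratio $\Ratio{k,n,p}{(n)}$ is \emph{scale-invariant} in the $x_i$'s, so the radial part $R^{(n)}$ cancels and we may work directly with $\X = G = (G_1,\dotsc,G_n)$ (up to an absolutely continuous change of measure that I will have to track, coming from the surface measure). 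Writing out $S_{n,p}^{(n)} = \bigl(\prod_{i=1}^n \lvert G_i\rvert\bigr)^{1/n}$ and $S_{k,p}^{(n)} = \bigl(\binom{n}{k}^{-1}\sum \lvert G_{i_1}\rvert^p\cdots\lvert G_{i_k}\rvert^p\bigr)^{1/(kp)}$, I would take logarithms: $\log \Ratio{k,n,p}{(n)} = \frac1n\sum_{i=1}^n \log\lvert G_i\rvert - \frac{1}{kp}\log\!\bigl(\binom{n}{k}^{-1}\sum \lvert G_{i_1}\rvert^p\cdots\lvert G_{i_k}\rvert^p\bigr)$.

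The key step is to linearize the $U$-statistic term. Since $\Erw[\lvert G\rvert^p] = 1$ (this is the normalization of the $p$-Gaussian), the symmetric mean $\binom{n}{k}^{-1}\sum \lvert G_{i_1}\rvert^p\cdots\lvert G_{i_k}\rvert^p$ concentrates around $1$, and a Hoeffding-type decomposition gives, to leading order, $\log$ of it $\approx \frac{k}{n}\sum_{i=1}^n(\lvert G_i\rvert^p - 1)$ plus a remainder of order $O_{\prb}(1/n)$ coming from the degenerate second-order kernel and from the Taylor expansion of $\log(1+u)$. Combining, $\sqrt{n}\bigl(\log\Ratio{k,n,p}{(n)} - m_p\bigr) = \frac{1}{\sqrt n}\sum_{i=1}^n\bigl(\log\lvert G_i\rvert - \frac1p\lvert G_i\rvert^p - (m_p - \frac1p)\bigr) + \text{(error)}$, which is exactly where the variance $s_p^2 = \Var[\log\lvert X\rvert - \frac1p\lvert X\rvert^p]$ enters. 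Then $\ee^{-m_p}\Ratio{k,n,p}{(n)} - 1 = \exp\bigl(\log\Ratio{k,n,p}{(n)} - m_p\bigr) - 1$, and another Taylor expansion of $\ee^x - 1$ around $0$ turns the CLT-scaled log-ratio into the claimed quantity, again at the cost of a quadratic error term of order $O_{\prb}(1/n)$ (or with a $\log n$ factor once one passes to a uniform-in-$t$ bound).

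The remaining work is bookkeeping in Kolmogorov distance: (i) the classical Berry–Esseen theorem for the i.i.d.\ sum $\frac{1}{\sqrt n}\sum_i\xi_i$ with $\xi_i = \log\lvert G_i\rvert - \frac1p\lvert G_i\rvert^p - \Erw[\cdots]$ gives a rate $O(1/\sqrt n)$, provided $\Erw\lvert\xi_i\rvert^3 < \infty$, which holds because the $p$-Gaussian has all moments and $\Erw[\lvert\log\lvert G\rvert\rvert^3]<\infty$; (ii) each error term $E_n = O_{\prb}(n^{-1}\sqrt{\log n})$ (the $\sqrt{\log n}$ coming from a union bound / maximal inequality used to get a high-probability envelope on $\max_i\lvert G_i\rvert$ and on the fluctuations of the $U$-statistic) is transferred to a Kolmogorov-distance error via the standard lemma $\dKol(A+E, Z) \le \dKol(A,Z) + \prb[\lvert E\rvert > \delta] + c\,\delta$ optimized in $\delta$, plus anti-concentration of the normal limit $Z$; (iii) for the surface-measure case under Assumption~\ref{AssA}, the Radon–Nikodym density of $\Surf_p^{(n)}$ against the cone measure is close to $1$ (its log is a bounded perturbation of the same type of sum), contributing another controllable $O(n^{-1/2})$ term. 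The main obstacle I anticipate is item (ii) combined with the $U$-statistic linearization: one must show that the \emph{cumulative} effect of truncating the Hoeffding decomposition, Taylor-expanding $\log(1+u)$ where $u$ is only $O_{\prb}(n^{-1/2})$ (so that on a bad event $u$ can be comparable to $1$), and Taylor-expanding the outer exponential, can all be absorbed into a single $c_{k,p}\sqrt{\log n}/\sqrt n$ bound uniformly in $n$; this requires a careful high-probability event on which $\min_i\lvert G_i\rvert$ is not too small and $\max_i\lvert G_i\rvert$ is not too large (whence the logarithmic factor), with the complementary event having probability $o(n^{-1/2})$ so as not to spoil the rate. The dependence of $c_{k,p}$ on $k$ enters only through the combinatorial factors in the $U$-statistic variance and is harmless for $k$ fixed.
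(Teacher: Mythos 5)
Your overall architecture — reduce to i.i.d.\ $p$\mbox{-}Gaussian coordinates by scale invariance, Taylor-expand the ratio, treat the leading term by a Berry\--Esseen bound, treat the Taylor remainder via a high-probability event and the perturbation lemma for the Kolmogorov distance, and finally compare the cone and surface measures — is the right frame and matches the paper's in spirit. However, the two places where the real work happens are handled differently, and one of them is genuinely underdeveloped.

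First, the leading term. You propose to take logs, then \emph{linearize} the product U\=/statistic $U_n(\phi^{(k)})=\binom{n}{k}^{-1}\sum\lvert G_{i_1}\rvert^p\dotsm\lvert G_{i_k}\rvert^p$ via Hoeffding projection so that everything becomes an i.i.d.\ sum, and apply the classical Berry\--Esseen theorem. This introduces an extra error term, the degenerate Hoeffding remainder, which you then have to control in Kolmogorov distance on top of the Taylor remainder. The paper avoids that entirely: after a bivariate Taylor expansion of $F(x,y)=\ee^x/(1+y)^{1/(kp)}$ it observes that $V_n^{(0)}-\frac{1}{pk}V_n^{(k)}$ is itself a single U\=/statistic with the $k$\textsuperscript{th}-order kernel $\tilde\phi^{(k)}(x_1,\dots,x_k)=\frac1k\sum_i\log\lvert x_i\rvert - \frac{1}{pk}\lvert x_1\rvert^p\dotsm\lvert x_k\rvert^p$, and it cites the Berry\--Esseen bound for U\=/statistics (Proposition~\ref{PropUStat}, part~\ref{UStatBE}) directly, yielding $O(1/\sqrt{n})$ with no further linearization error. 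So your route is workable but duplicates work and produces one more error stream to manage.

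Second — and this is the genuine gap — the concentration mechanism for the remainder. You attribute the needed tail control to ``a union bound / maximal inequality used to get a high-probability envelope on $\max_i\lvert G_i\rvert$ and on the fluctuations of the U\=/statistic,'' and to keeping $\min_i\lvert G_i\rvert$ from being too small. Bounding $\max_i\lvert G_i\rvert$ and $\min_i\lvert G_i\rvert$ does not, by itself, give concentration of the U\=/statistic $V_n^{(k)}$ around its mean: even on an event where every $\lvert G_i\rvert$ is of order one, the deviation of $\binom{n}{k}^{-1}\sum\prod\lvert G_{i_j}\rvert^p$ from $1$ still has to be bounded with exponential-type tails in order to feed Proposition~\ref{PropEstimateKolmogoroff} with an $\epsilon\sim\log(n)/\sqrt{n}$ and keep the complementary probability at $O(1/\sqrt{n})$. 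Chebyshev on the degenerate Hoeffding remainder gives only polynomial tails and would force $\epsilon\gtrsim n^{-1/4}$, spoiling the rate. The paper's solution is to prove a moderate deviations principle for U\=/statistics with product kernel (Proposition~\ref{LemMDPUStat}) — a nontrivial result established via the exponential equivalence of $U_n(\phi^{(k)})$ with $(\frac1n\sum X_i)^k$, going through Lemma~\ref{LemSummen} and Proposition~\ref{LemStrechedExp} — precisely because standard MDP results for U\=/statistics require Cram\'er's condition on the whole kernel, which the product kernel of $p$\=/Gaussians fails to satisfy for $k\ge2$ (see Remark~\ref{rem:mdp_ldp_ustat}). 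Your sketch silently assumes some form of this concentration but provides no mechanism for it; without Proposition~\ref{LemMDPUStat} (or an equivalent exponential tail bound for $V_n^{(k)}$), the Kolmogorov-distance bookkeeping cannot close. Finally, on the surface-measure case, the paper uses the total-variation bound $\lVert\Keg_p^{(n)}-\Surf_p^{(n)}\rVert_{\TV}\le c_p/\sqrt{n}$ (Proposition~\ref{PropSurfMeasConeMeasClose}) to bound the difference of the two Kolmogorov distances directly; your plan of tracking the Radon\--Nikodym density is a more hands-on version of the same idea and is fine.
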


The proof of Theorem~\ref{ThmBE1} uses Taylor expansion in order to write \(\Ratio{k, n, p}{(n)}\) as a sum of a certain U\=/statistic, for which a Berry\--Esseen result is available already, and the remainder term which is treated as a perturbation whose behaviour is controlled with the help of Theorem~\ref{ThmMDPRkn} given below. 

\begin{rem}
Theorem~\ref{ThmBE1} gives a similar asymptotic bound of the distance to a normal distribution as the classical theorem of Berry\--Esseen. The ``$\sqrt{\log(n)}$'' on the right\-/hand side seems to be an artifact of our method of proof and we conjecture that this factor is not necessary.
\end{rem}

An almost immediate consequence of Theorem~\ref{ThmClt1} is the following answer to the question stated in the introduction in the context of the AM\==GM\-/inequality, in how much the Maclaurin inequality can be reversed.

\begin{cor}\label{cor:CLT1}
Under the conditions of Theorem~\ref{ThmClt1} we have
\begin{equation*}
\lim_{n \to \infty} \prb\bigl[ S_{n, p}^{(n)} \geq \ee^{m_p} S_{k, p}^{(n)} \bigr] = \frac{1}{2},
\end{equation*}
that is, with probability approaching \(\frac{1}{2}\), we can reverse Maclaurin's inequality up to the factor \(\ee^{m_p}\).
\end{cor}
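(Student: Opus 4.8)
The plan is to derive Corollary~\ref{cor:CLT1} directly from Theorem~\ref{ThmClt1} by rewriting the event in terms of the normalized Maclaurin ratio. First I would observe that, since $S_{k,p}^{(n)} > 0$ almost surely (the random vector $\X$ has all coordinates nonzero almost surely under Assumption~\ref{AssA}), the inequality $S_{n,p}^{(n)} \geq \ee^{m_p} S_{k,p}^{(n)}$ is equivalent to $\ee^{-m_p} \Ratio{k, n, p}{(n)} \geq 1$, hence to $\sqrt{n}\bigl( \ee^{-m_p} \Ratio{k,n,p}{(n)} - 1 \bigr) \geq 0$. Therefore
\[
\prb\bigl[ S_{n,p}^{(n)} \geq \ee^{m_p} S_{k,p}^{(n)} \bigr] = \prb\Bigl[ \sqrt{n}\bigl( \ee^{-m_p} \Ratio{k,n,p}{(n)} - 1 \bigr) \geq 0 \Bigr].
\]

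Next I would apply Theorem~\ref{ThmClt1}, which gives $\sqrt{n}\bigl( \ee^{-m_p}\Ratio{k,n,p}{(n)} - 1 \bigr) \KiVert{n\to\infty} Z$ with $Z \sim \Normal(0, s_p^2)$ and $s_p^2 \in (0,\infty)$. Since the limiting distribution is a centered nondegenerate Gaussian, its cumulative distribution function is continuous at $0$, so $0$ is a continuity point of the limit law. By the portmanteau theorem (equivalently, the definition of convergence in distribution evaluated at continuity points of the limiting CDF, noting that $\prb[Z \geq 0] = \prb[Z > 0]$ since $\prb[Z = 0] = 0$), we conclude
\[
\lim_{n\to\infty} \prb\Bigl[ \sqrt{n}\bigl( \ee^{-m_p}\Ratio{k,n,p}{(n)} - 1 \bigr) \geq 0 \Bigr] = \prb[Z \geq 0] = \frac{1}{2},
\]
the last equality because $Z$ is symmetric about $0$. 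Combining the two displays yields the claim.

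There is essentially no obstacle here; the only point requiring a line of care is justifying that convergence in distribution transfers to the probability of the closed half-line $[0,\infty)$, which is handled precisely because the Gaussian limit puts no mass at the boundary point $0$ and $s_p^2 > 0$ guarantees nondegeneracy. If one wished, one could alternatively invoke Theorem~\ref{ThmBE1} to get the stronger quantitative statement that the probability equals $\frac{1}{2} + O\bigl(\sqrt{\log n}/\sqrt{n}\bigr)$, but for the corollary as stated the qualitative central limit theorem suffices.
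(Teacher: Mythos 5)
Your proposal is correct and follows essentially the same route as the paper: rewrite the event in terms of $\sqrt{n}\bigl(\ee^{-m_p}\Ratio{k,n,p}{(n)}-1\bigr)$, invoke Theorem~\ref{ThmClt1}, and use continuity of the Gaussian CDF at $0$ together with symmetry to get $\tfrac12$. The only cosmetic difference is that the paper records the limit for every $t\in\R$ before specializing to $t=0$, whereas you go directly to $t=0$ via the portmanteau theorem.
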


Whereas Theorem~\ref{ThmClt1} compares the \(k\)\textsuperscript{th} symmetric mean against the geometric mean (for fixed \(k\)), the next theorem compares the \(k_1\)\textsuperscript{th} against the \(k_2\)\textsuperscript{th} symmetric mean (for fixed \(k_1 \neq k_2\)) and provides a CLT.

\begin{thmalpha}
\label{ThmClt2}
Let $k_1, k_2 \in \N$ be fixed with $k_1 < k_2$, let $p \in [1, \infty) $, and for each \(n \geq k_2\) let $ \X \sim \Keg_p^{(n)}$ be a random vector in $\R^n$. Then, the Maclaurin ratio $ \Ratio{k_1, k_2, p}{(n)}$ satisfies
\begin{equation*}
\sqrt{n} \left( \frac{np}{k_2 - k_1} \left(\Ratio{k_1, k_2, p}{(n)} - 1 \right) + \frac{p}{2} \right) \KiVert{n \to \infty} Z,
\end{equation*}
where $Z \sim \Normal(0, \rho_p^2)$.
\end{thmalpha}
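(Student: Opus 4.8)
The plan is to exploit the probabilistic representation of the cone measure to write $\Ratio{k_1, k_2, p}{(n)}$ as a smooth function of elementary symmetric polynomials of i.i.d.\ random variables, to Taylor expand one order further than in the proof of Theorem~\ref{ThmClt1} so as to expose the cancellation that forces the scaling $n^{-3/2}$, and then to apply the classical central limit theorem together with Slutsky's lemma.

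First I would invoke the Schechtman--Zinn representation: since $\X \sim \Keg_p^{(n)}$ one has $\X \GlVert \lVert G \rVert_p^{-1} (G_1, \dotsc, G_n)$ with $G_1, \dotsc, G_n$ i.i.d.\ $p$-Gaussian (the precise normalisation of the $p$-Gaussian is irrelevant, as it cancels in $\lVert G\rVert_p^{-1}(G_1,\dotsc,G_n)$). Setting $E_i := \lvert G_i \rvert^p$, these are i.i.d., Gamma distributed, with $\Erw[E_i] = 1$, $\Var[E_i] = p$, and all moments finite. Because $\lvert X_i \rvert^p = E_i / \Sigma_n$ with $\Sigma_n := \sum_{j = 1}^n E_j$, the radial factor $\Sigma_n^{-1/p}$ drops out of the ratio, leaving
\begin{equation*}
\Ratio{k_1, k_2, p}{(n)} = \bar e_{k_2}^{\,1/(k_2 p)} \big/ \bar e_{k_1}^{\,1/(k_1 p)}, \qquad \text{where } \bar e_k := \tbinom{n}{k}^{-1} e_k(E_1, \dotsc, E_n)
\end{equation*}
and $e_k$ is the $k$\textsuperscript{th} elementary symmetric polynomial.

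Now let $\xi_i := E_i - 1$, so the $\xi_i$ are i.i.d., centred, with $\Erw[\xi_i^2] = p$. Expanding $\prod_{i = 1}^n (1 + t(1 + \xi_i))$ in powers of $t$ gives the identity $\bar e_k = \sum_{j = 0}^k \frac{(k)_j}{(n)_j} e_j(\xi_1, \dotsc, \xi_n)$, with $(m)_j := m (m - 1) \dotsm (m - j + 1)$ the falling factorial. Writing $a := \frac1n \sum_{i = 1}^n \xi_i = O_{\prb}(n^{-1/2})$ and using Newton's identities together with the bounds $e_j(\xi) = O_{\prb}(n^{j/2})$ and $\sum_i \xi_i^2 = n p + O_{\prb}(\sqrt n)$, one obtains
\begin{equation*}
\bar e_k = (1 + a)^k - \frac{k(k - 1) p}{2 n} - \frac{k(k - 1)(k - 2) a p}{2 n} - \frac{k(k - 1)}{2 n^2} \Bigl( \textstyle\sum_{i = 1}^n \xi_i^2 - n p \Bigr) + O_{\prb}(n^{-2}).
\end{equation*}
Factoring out $(1 + a)^k$, taking logarithms, dividing by $k p$, and forming $\log \Ratio{k_1, k_2, p}{(n)} = \frac{1}{k_2 p} \log \bar e_{k_2} - \frac{1}{k_1 p} \log \bar e_{k_1}$, the $k$-independent contribution $\frac1p \log(1 + a)$ cancels, and after collecting terms one arrives at
\begin{equation*}
\log \Ratio{k_1, k_2, p}{(n)} = \frac{k_2 - k_1}{n} \Bigl( -\frac12 + \frac1n \sum_{i = 1}^n \zeta_i \Bigr) + O_{\prb}(n^{-2}), \qquad \zeta_i := \xi_i - \frac{\xi_i^2 - p}{2 p}.
\end{equation*}
Since $\log \Ratio{k_1, k_2, p}{(n)} = O_{\prb}(n^{-1})$, exponentiating yields $\Ratio{k_1, k_2, p}{(n)} - 1 = \log \Ratio{k_1, k_2, p}{(n)} + O_{\prb}(n^{-2})$, whence $\frac{n p}{k_2 - k_1}(\Ratio{k_1, k_2, p}{(n)} - 1) + \frac p2 = \frac pn \sum_{i = 1}^n \zeta_i + O_{\prb}(n^{-1})$, and therefore $\sqrt n \bigl( \frac{n p}{k_2 - k_1}(\Ratio{k_1, k_2, p}{(n)} - 1) + \frac p2 \bigr) = p \cdot \frac{1}{\sqrt n} \sum_{i = 1}^n \zeta_i + o_{\prb}(1)$. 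As the $\zeta_i$ are i.i.d., centred, with finite variance, the Lindeberg--L\'evy CLT and Slutsky's lemma give convergence in distribution to $\Normal(0, p^2 \Var[\zeta_1])$; a short computation shows $p^2 \Var[\zeta_1] = \frac14 \Var[2 p \xi_1 - \xi_1^2] = \frac14 \Var[(\xi_1 - p)^2] = \rho_p^2$ (recall $\lvert X \rvert^p \GlVert E_1$ for a $p$-Gaussian $X$), which is the claimed limit.

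The main obstacle is the bookkeeping in the two displayed expansions. The dominant $O_{\prb}(n^{-1/2})$ fluctuation — the first-order Hoeffding projection, proportional to the $k$-independent quantity $\frac1p \log(1 + a)$ — cancels between $k_1$ and $k_2$, so the surviving randomness sits at scale $n^{-3/2}$; this forces an expansion one order deeper than for Theorem~\ref{ThmClt1} and, crucially, the retention of the $O_{\prb}(\sqrt n)$ fluctuation of the degree-two term $e_2(\xi)$ (equivalently of $\sum_i \xi_i^2$) — replacing $e_2(\xi)$ by its mean produces the wrong variance. One must also verify that everything discarded — the symmetric polynomials $e_j(\xi)$ with $j \geq 4$, the differences between $\frac{(k)_j}{(n)_j}$ and $\frac{k^j}{n^j}$, the subleading parts of the power sums $\sum_i \xi_i^m$, and the higher Taylor coefficients of $\log(1 + \cdot)$ and of $\exp$ — is genuinely $O_{\prb}(n^{-2})$, hence negligible after multiplication by the normalising factor of order $n^{3/2}$; this is routine once one knows that all moments of $E_i$ are finite.
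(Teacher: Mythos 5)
Your argument is correct and reaches the same limiting random variable as the paper (in the paper's notation, $p N_1 - \tfrac12 N_2$, which is precisely your $p\,n^{-1/2}\sum_i\zeta_i$), but the technical route differs in two interesting ways. The paper's proof is organised around the U\=/statistic machinery: it applies Hoeffding's decomposition to $U_n(\phi^{(k)}) = (S_{k,p}^{(n)})^{kp}$, computes the Hoeffding kernels explicitly as $\phi_h^{(k)}(x_1,\dotsc,x_h)=\prod_{i=1}^h(\lvert x_i\rvert^p-1)$, rewrites $U_n(\phi_h)$ for $h\in\{1,2,3\}$ via power sums, and then Taylor\-/expands $(S_{k,p}^{(n)})^{+1}$ and $(S_{k_1,p}^{(n)})^{-1}$ separately to order $n^{-3/2}$ before multiplying and collecting terms; the joint convergence of the pieces $V_n^{(1)},\dotsc,V_n^{(4,i)}$ is then read off from a two\-/dimensional CLT together with a cited result of Lee for the high\-/order remainder. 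You instead express $\bar e_k = U_n(\phi^{(k)})$ directly as $\sum_{j\le k}\frac{(k)_j}{(n)_j}e_j(\xi)$, take \emph{logarithms}, and observe that $\log\Ratio{k_1,k_2,p}{(n)}$ is a difference in which the $k$\-/independent leading fluctuation $\tfrac1p\log(1+a)$ cancels exactly; this sidesteps both the Hoeffding decomposition and the multiplication of two long Taylor expansions, replacing them with a single expansion and a subtraction, and the final step is the classical Lindeberg--L\'evy CLT for the i.i.d.\ sum $\sum_i\zeta_i$ rather than a multivariate U\=/statistic CLT. Your variance computation $p^2\Var[\zeta_1]=\tfrac14\Var[(\xi_1-p)^2]=\rho_p^2$ matches the paper's $\Var[pN_1-\tfrac12 N_2]$. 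The logarithm makes the crucial cancellation structurally transparent (a difference of two $k$\-/weighted logs of $\bar e_k$) and avoids the bookkeeping of cross\-/terms when two third\-/order Taylor polynomials are multiplied; the trade\-/off is that you need to justify that $e_j(\xi)=O_\prb(n^{j/2})$ for all $j\ge4$ (clean, since $\Var[e_j(\xi)]=\binom{n}{j}p^j$) and that $\log$, $\exp$ can be expanded in probability to the requisite order, which you correctly flag as routine given that all moments of $\lvert Y_1\rvert^p$ are finite. Both approaches would generalise from the cone measure to $\GenDist$ by the same radial cancellation used in the paper's ``general observation.''
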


Theorem~\ref{ThmClt2} basically is a consequence of a variation of the delta\-/method and the CLT for U\=/statistics. (Theorem~\ref{ThmClt1} too can be proved directly in this manner instead of via Theorem~\ref{ThmBE1}.) Concerning reversal of Maclaurin's inequality we can make the following statement.

\begin{cor}\label{cor:CLT2}
Under the premises of Theorem~\ref{ThmClt2}, for any \(c \in (0, 1)\), we know
\begin{equation*}
\lim_{n \to \infty} \prb\bigl[ S_{k_2, p}^{(n)} \geq c S_{k_1, p}^{(n)} \bigr] = 1.
\end{equation*}
This means, with probability approaching \(1\), Maclaurin's inequality can be reversed up to a constant factor \(c\) arbitrarily close to \(1\).
\end{cor}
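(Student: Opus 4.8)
The plan is to read the claim off directly from Theorem~\ref{ThmClt2}: the rescaling there forces $\Ratio{k_1, k_2, p}{(n)} \to 1$ in probability, and the event in question coincides, up to a null set, with $\{\Ratio{k_1, k_2, p}{(n)} \geq c\}$.

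First I would record that, under $\X \sim \Keg_p^{(n)}$, almost surely none of the coordinates of $\X$ vanishes, so that $S_{k_1, p}^{(n)}(\X) > 0$ almost surely; hence the events $\{S_{k_2, p}^{(n)} \geq c\, S_{k_1, p}^{(n)}\}$ and $\{\Ratio{k_1, k_2, p}{(n)} \geq c\}$ differ only by a null event, and it suffices to prove that $\prb[\Ratio{k_1, k_2, p}{(n)} \geq c] \to 1$. Next, set
\[
W_n := \sqrt{n}\Bigl( \frac{np}{k_2 - k_1}\bigl(\Ratio{k_1, k_2, p}{(n)} - 1\bigr) + \frac{p}{2} \Bigr),
\]
which by Theorem~\ref{ThmClt2} converges in distribution to $Z \sim \Normal(0, \rho_p^2)$ and is therefore bounded in probability. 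Solving for the ratio yields
\[
\Ratio{k_1, k_2, p}{(n)} - 1 = \frac{k_2 - k_1}{np}\Bigl( \frac{W_n}{\sqrt{n}} - \frac{p}{2} \Bigr),
\]
and since $W_n/\sqrt{n} \to 0$ in probability (a stochastically bounded sequence divided by $\sqrt{n}$), the right\-/hand side converges to $0$ in probability; that is, $\Ratio{k_1, k_2, p}{(n)} \to 1$ in probability.

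Finally, fixing $c \in (0, 1)$ and using that $\lvert\Ratio{k_1, k_2, p}{(n)} - 1\rvert \le 1 - c$ implies $\Ratio{k_1, k_2, p}{(n)} \ge c$, we get
\[
\prb\bigl[ \Ratio{k_1, k_2, p}{(n)} \geq c \bigr] \;\geq\; \prb\bigl[ \bigl\lvert \Ratio{k_1, k_2, p}{(n)} - 1 \bigr\rvert \leq 1 - c \bigr] \;\longrightarrow\; 1,
\]
which together with the identification of events above gives the statement. There is essentially no obstacle here: the corollary is a soft consequence of Theorem~\ref{ThmClt2}, the only (routine) point being the almost sure positivity of $S_{k_1, p}^{(n)}$ under $\Keg_p^{(n)}$, needed to pass between the two formulations of the event.
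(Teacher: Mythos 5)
Your proof is correct and, at its core, is the same argument as the paper's: both use Theorem~\ref{ThmClt2} to extract that $\Ratio{k_1, k_2, p}{(n)} \to 1$ in probability, and both then conclude with a one-line comparison of events. The difference is purely in packaging. You invert the rescaling and argue via tightness (the sequence $W_n$ converges in distribution, hence is stochastically bounded, hence $W_n/\sqrt{n} \to 0$ in probability), then apply the definition of convergence in probability. The paper instead invokes Polya's theorem — continuity of the limiting CDF gives uniform convergence of the tail probabilities — and then plugs in the moving threshold $t_n = -n$ to obtain $\prb[\Ratio{k_1, k_2, p}{(n)} \ge 1 - \tfrac{k_2 - k_1}{p\sqrt{n}} - \tfrac{k_2 - k_1}{2n}] \to 1$, followed by a squeeze using that the threshold eventually exceeds $c$. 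Your route avoids the appeal to Polya and is, if anything, slightly more elementary; the paper's route makes the dependence of the implicit threshold on $n$ explicit, which can be informative if one wanted a quantitative refinement. Both are equally valid. Your opening remark about almost sure positivity of $S_{k_1,p}^{(n)}$ under the cone measure — so that the ratio formulation and the inequality formulation describe the same event up to a null set — is a detail the paper passes over silently but is worth including.
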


%%%%%%%%%%%%%%%%%%%%%%%%%%%%%%%%%%%
%%%%%%%%%%%%%%%%%%%%%%%%%%%%%%%%%%%
\subsubsection{Moderate deviations principles for $\Ratio{k, n, p}{(n)}$}
%%%%%%%%%%%%%%%%%%%%%%%%%%%%%%%%%%%
%%%%%%%%%%%%%%%%%%%%%%%%%%%%%%%%%%%

\begin{thmalpha}[Moderate deviations]
\label{ThmMDPRkn}
Let $k \in \N$ be fixed, $p \in [1, \infty) $, and for each \(n \geq k\) let $\X$ be a random vector in $\R^n $ such that Assumption~\ref{AssA} holds. Further, let $(b_n)_{n \in \N} $ be a sequence of positive real numbers such that \(\lim_{n \to \infty} \frac{b_n}{\sqrt{n}} = 0\), and either \(\lim_{n \rightarrow \infty } b_n = \infty\) if \(\X \sim \Keg_p^{(n)}\) for all \(n \in \N\), or \(\lim_{n \to \infty} \frac{b_n}{\sqrt{\log(n)}} = \infty\) if \(\X \sim \Surf_p^{(n)}\) for all \(n \in \N\). Then $\Bigl( \frac{\sqrt{n}}{b_n} \bigl( \ee^{-m_p} \Ratio{k,n,p}{(n)} - 1 \bigr) \Bigr)_{n \geq k} $ satisfies a moderate deviations principle at speed $(b_n^2)_{n \in \N}$ with good rate function $\I \colon \R \rightarrow [0, \infty] $ given by $\I(t) := \frac{t^2}{2 s_p^2}$.
\end{thmalpha}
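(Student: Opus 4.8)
The plan is to reduce Theorem~\ref{ThmMDPRkn} to the classical moderate deviations principle for sums of i.i.d.\ random variables, using the standard probabilistic representation of the distributions in Assumption~\ref{AssA}, and then to transfer the MDP by an exponential equivalence argument. Recall the Schechtman--Zinn representation: if $g_1, \dotsc, g_n$ are i.i.d.\ $p$-Gaussian random variables and $\lVert g\rVert_p := \bigl(\sum_{i=1}^n \lvert g_i\rvert^p\bigr)^{1/p}$, then $(g_i/\lVert g\rVert_p)_{i=1}^n \sim \Keg_p^{(n)}$, independently of $\lVert g\rVert_p$, and the distributions $\GenDist$ share this directional part. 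Since $S_{k,p}^{(n)}$ is homogeneous of degree $1$, the ratio $\Ratio{k,n,p}{(n)}$ is scale invariant, so for $\X\sim\GenDist$ we may simply replace $\X$ by $(g_i)_{i=1}^n$ and the radial factor drops out; for $\X\sim\Surf_p^{(n)}$ the same substitution applies after reweighting by the explicitly known Radon--Nikodym density $\frac{\dd\Surf_p^{(n)}}{\dd\Keg_p^{(n)}}$ (cf.\ the references in the introduction), whose effect is dealt with in the final step. In all cases we thus work with
\begin{equation*}
\log \Ratio{k,n,p}{(n)} = \frac{1}{n}\sum_{i=1}^n \log\lvert g_i\rvert - \frac{1}{kp}\log U_n,
\end{equation*}
where $U_n$ is the U-statistic of order $k$ with product kernel $(y_1,\dotsc,y_k)\mapsto\lvert y_1\rvert^p\dotsm\lvert y_k\rvert^p$; by the normalization of the $p$-Gaussian, $\Erw[\lvert g_1\rvert^p]=1$, hence $\Erw[U_n]=1$.

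Next we linearize. Put $\psi(y):=\lvert y\rvert^p-1$, so $\Erw[\psi(g_1)]=0$; expanding the kernel as $\prod_j(1+\psi(y_j))=\sum_{S\subseteq\{1,\dotsc,k\}}\prod_{j\in S}\psi(y_j)$ gives the Hoeffding-type decomposition $U_n-1=\tfrac{k}{n}\sum_{i=1}^n\psi(g_i)+V_n$, where $V_n$ is a sum of completely degenerate U-statistics of orders $2,\dotsc,k$, the order-$j$ one being $O_p(n^{-j/2})$, so that $V_n=O_p(n^{-1})$. Combining this with $\log(1+x)=x+O(x^2)$ and $\ee^{y}-1=y+O(y^2)$ yields
\begin{equation*}
\ee^{-m_p}\Ratio{k,n,p}{(n)}-1 = \frac{1}{n}\sum_{i=1}^n Y_i + \mathrm{Rem}_n, \qquad Y_i := \log\lvert g_i\rvert - m_p - \tfrac{1}{p}\bigl(\lvert g_i\rvert^p-1\bigr),
\end{equation*}
where $\Erw[Y_i]=0$ and $\Var[Y_i]=s_p^2$ by~\eqref{DefRelQuant}, and $\mathrm{Rem}_n$ collects $-\tfrac{1}{kp}V_n$, the quadratic Taylor remainders of $\log(1+\cdot)$ at $U_n-1$ and of $\ee^{\cdot}-1$ at $\log\bigl(\ee^{-m_p}\Ratio{k,n,p}{(n)}\bigr)$, and, in the surface-measure case, the logarithm of $\frac{\dd\Surf_p^{(n)}}{\dd\Keg_p^{(n)}}$.

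The variable $Y_1$ has a finite moment generating function near the origin: for $\lvert t\rvert<1$, $\Erw[\ee^{tY_1}]$ is, up to a constant, a multiple of $\Erw\bigl[\lvert g_1\rvert^{t}\ee^{-(t/p)\lvert g_1\rvert^p}\bigr]<\infty$, the exponential factor taming large values when $t>0$ and $\Erw[\lvert g_1\rvert^{-\lvert t\rvert}]<\infty$ for $\lvert t\rvert<1$ taming small ones. Hence, by the classical i.i.d.\ moderate deviations principle (see, e.g.\ Dembo--Zeitouni, or Eichelsbacher--L\"owe), under the standing hypotheses on $(b_n)$ the sequence $\bigl(\tfrac{1}{b_n\sqrt{n}}\sum_{i=1}^n Y_i\bigr)_n$ satisfies an MDP at speed $(b_n^2)_n$ with good rate function $\I(t)=\tfrac{t^2}{2s_p^2}$. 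It then remains to show that $\tfrac{\sqrt{n}}{b_n}\mathrm{Rem}_n$ is superexponentially negligible at speed $(b_n^2)_n$, i.e.\ $\limsup_{n\to\infty}\tfrac{1}{b_n^2}\log\prb\bigl[\tfrac{\sqrt{n}}{b_n}\lvert\mathrm{Rem}_n\rvert>\delta\bigr]=-\infty$ for all $\delta>0$; granting this, $\tfrac{\sqrt{n}}{b_n}\bigl(\ee^{-m_p}\Ratio{k,n,p}{(n)}-1\bigr)$ and $\tfrac{1}{b_n\sqrt{n}}\sum_{i=1}^n Y_i$ are exponentially equivalent, and the MDP transfers by the exponential-equivalence principle.

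The estimate on $\mathrm{Rem}_n$ is the technical heart and the main obstacle. Because $\delta b_n/\sqrt{n}$ is much larger than the typical size $O(1/n)$ of $\mathrm{Rem}_n$, each term must be controlled through a genuine deviation estimate; that $b_n=o(\sqrt{n})$ ensures the relevant deviation levels stay in the Gaussian regime, where the exponents are of order $b_n\sqrt{n}$, dominating $b_n^2$. For the degenerate U-statistics in $V_n$, concentration bounds for degenerate U-statistics with kernels having exponential moments (Arcones--Gin\'e, Gin\'e--Lata\l a--Zinn; decoupling plus a Hanson--Wright type inequality for the order-$2$ part) give Bernstein-type tails whose exponent at level $\delta b_n/\sqrt{n}$ is of order $b_n\sqrt{n}\gg b_n^2$. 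For the Taylor remainders one restricts to the event that $\lvert U_n-1\rvert$ and $\bigl\lvert\log\bigl(\ee^{-m_p}\Ratio{k,n,p}{(n)}\bigr)\bigr\rvert$ are $\lesssim b_n/\sqrt{n}$ --- the complement being superexponentially small at speed $(b_n^2)$ by the Cram\'er bound for $(Y_i)$ together with the previous step --- and on which the remainders are $O(b_n^2/n)$. This settles the case $\X\sim\Keg_p^{(n)}$, for which $b_n\to\infty$ suffices. When $\X\sim\Surf_p^{(n)}$ one needs in addition that $\tfrac{1}{b_n^2}\log\frac{\dd\Surf_p^{(n)}}{\dd\Keg_p^{(n)}}\to0$ off a set of $\Keg_p^{(n)}$-probability $\ee^{-\omega(b_n^2)}$; since this density is, up to normalization, proportional to $\bigl(\sum_{i=1}^n\lvert g_i\rvert^{2(p-1)}\bigr)^{1/2}\lVert g\rVert_p^{-(p-1)}$, truncating at $\max_i\lvert g_i\rvert\le C(\log n)^{1/p}$ --- which fails with only polynomially small probability $\ee^{-\Theta(\log n)}$ --- makes the factor subexponential in $b_n^2$ precisely when $b_n^2/\log n\to\infty$, which is why the surface case carries the extra hypothesis $b_n/\sqrt{\log n}\to\infty$. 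The delicate point throughout is to carry out all of these estimates uniformly enough to handle the regions where some $\lvert g_i\rvert$ is atypically small (so that $\log\lvert g_i\rvert$ is very negative) or atypically large, where crude exponential bounds break down and the precise tail behaviour of the $p$-Gaussian must be exploited.
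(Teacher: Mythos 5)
Your overall strategy --- reduce to i.i.d.~$p$-Gaussian via the Schechtman--Zinn representation, extract the linear part $\frac{1}{n}\sum_i Y_i$ with $Y_i = \log\lvert g_i\rvert - m_p - \frac{1}{p}(\lvert g_i\rvert^p - 1)$, invoke the classical i.i.d.\ MDP, and transfer by exponential equivalence --- is the same as the paper's, and the linearization itself matches Lemma~\ref{LemTaylorRatio}. However, there are two genuine gaps in the way you propose to control the error terms.

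\textbf{The degenerate part of the U-statistic.} You propose to bound the Hoeffding terms of order $h \geq 2$ by citing concentration inequalities for degenerate U-statistics with kernels having exponential moments (Arcones--Gin\'e, Gin\'e--Lata{\l}a--Zinn, Hanson--Wright). But the degenerate kernel $\phi_h(x_1,\dotsc,x_h) = \prod_{i=1}^h(\lvert x_i\rvert^p - 1)$ does \emph{not} have a finite exponential moment for $h \geq 2$: since the marginals $\lvert g_i\rvert^p$ are only subexponential, the product has stretched-exponential tails (roughly $\exp(-c t^{1/h})$), and $\Erw[\exp(t_0\lvert\phi_h\rvert)] = \infty$ for every $t_0>0$. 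This is precisely the obstruction the paper highlights in Remark~\ref{rem:mdp_ldp_ustat}: Cram\'er's condition for the kernel fails, so the standard MDP/concentration results for U-statistics are unavailable. You would need to make the claimed Bernstein-type tail bound quantitative for these heavy-tailed kernels, and there is no off-the-shelf result; the paper instead avoids Hoeffding's decomposition entirely and compares $U_n(\phi^{(k)})$ to the V-statistic $\bigl(\frac{1}{n}\sum_i X_i\bigr)^k$ via the elementary combinatorial bound of Lemma~\ref{LemSummen}, reducing everything to sums of i.i.d.\ variables for which the stretched-exponential estimate of Proposition~\ref{LemStrechedExp} applies.

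\textbf{The surface-measure case.} Your plan to truncate at $\max_i\lvert g_i\rvert \leq C(\log n)^{1/p}$ is both unnecessary and incorrect in the regime of interest. The truncation fails with probability $\exp(-\Theta(\log n))$, but the hypothesis on $(b_n)$ is $b_n^2/\log n \to \infty$, hence $\log n = o(b_n^2)$ and $\frac{1}{b_n^2}\log\bigl(\exp(-\Theta(\log n))\bigr) \to 0 \neq -\infty$; the complement event is \emph{not} superexponentially negligible at speed $b_n^2$, so the truncation cannot be discarded. Fortunately no truncation is needed: Proposition~\ref{PropSurfKegDichte} gives a \emph{deterministic} bound $n^{-\lvert 1/p - 1/2\rvert} \leq \frac{\dd\Surf_p^{(n)}}{\dd\Keg_p^{(n)}} \leq n^{\lvert 1/p - 1/2\rvert}$ uniformly on $\Sph{p}{n-1}$, so $\bigl\lvert\frac{1}{b_n^2}\log\frac{\dd\Surf_p^{(n)}}{\dd\Keg_p^{(n)}}\bigr\rvert \leq \frac{c\log n}{b_n^2} \to 0$ everywhere, which is what Lemma~\ref{PropConeSurfExpEqui} exploits. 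You should replace the truncation argument with this uniform density bound.

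A smaller remark: in your treatment of the Taylor remainders, restricting to the event $\lvert U_n - 1\rvert \lesssim b_n/\sqrt{n}$ and declaring the complement superexponentially small at speed $b_n^2$ needs care. An MDP-type bound gives $\prb[\sqrt{n}\lvert U_n - 1\rvert > C b_n] \approx \exp(-c C^2 b_n^2)$, which is only $\exp(-\Theta(b_n^2))$ for fixed $C$. You need to work with an arbitrary threshold $T b_n/\sqrt{n}$ and let $T\to\infty$ at the end (as in the paper's Lemma~\ref{PropExpEquivalence}) to get the required $-\infty$.
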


The main point in the proof of Theorem~\ref{ThmMDPRkn} is to establish exponential equivalence of the expressions \(\binom{n}{k}^{-1} \sum_{i_1 < \dotsb < i_k} X_{i_1} \dotsm X_{i_k}\) and \(\bigl( \frac{1}{n} \sum_{i = 1}^k X_i \bigr)_k\), thereby reducing a problem with dependent summands to one with independent summands, where moderate deviations are well\-/understood. 

\begin{rem}
We emphasize the fact that Theorems~\ref{ThmClt1}, \ref{ThmBE1}, and \ref{ThmMDPRkn} are ``oblivious'' to the parameter \(k\), i.e., the limiting objects to not depend on (fixed!)\ \(k\) and take the same forms as for the known case \(k = 1\), i.e.,\ the AM\--GM setting; even more, shift and scaling of the quantity under consideration, \(\ee^{-m_p} \Ratio{k, n, p}{(n)} - 1\), make no use of \(k\). Compare this to Theorem~\ref{ThmClt2}, where the limiting distribution indeed does not depend on \(k_1\) or \(k_2\), but at least the difference \(k_2 - k_1\) enters the proper scaling.
\end{rem}

\begin{rem}
The question of large deviations principles for \(\Ratio{k, n, p}{(n)}\) remains open still, as does that of further limit theorems for \(\Ratio{k_1, k_2, p}{(n)}\). The reason is a lack of suitable tools; there do exist general results concerning LDPs for U\=/statistics, but their strict conditions are not met by our setting. For more details we refer the reader to Remark~\ref{rem:mdp_ldp_ustat}.
\end{rem}

\subsection*{Organization of the paper}

The remainder of this article is structured as follows. In Section~\ref{sec:notation and prelim} we lay out the mathematical background needed throughout this article; this entails basics from probability theory including moderate and large deviations; measures related to finite\-/dimensional \(\ell_p\)\=/spaces; and the theory of U\=/statistics. In Section~\ref{SectionProofsPrelim} follow the proofs of the main results and corollaries as stated above, together with a few lemmas.

% % % % % % % % % % % % % % % % % % % % % % % % % % % % % % % % % % % %
\section{Notation and preliminaries}
\label{sec:notation and prelim}
% % % % % % % % % % % % % % % % % % % % % % % % % % % % % % % % % % % % 

We shall now briefly introduce the notation used throughout the text together with some background material on large deviations and some further results used in the proofs.

% % % % % % % % % % %
\subsection{Notation}
% % % % % % % % % % %

We set \(\R_+ := (0, \infty)\). The standard inner product on \(\R^n\) is written \(\langle \cdot, \cdot \rangle\), and the Euclidean norm \(\lVert \cdot \rVert_2\). The induced Borel \(\sigma\)\=/algebra is denoted by \(\Borel(\R^n)\), and measurability by default refers thereto.

We shall suppose that all random variables are defined on a common probability space whose probability measure is written \(\prb\); the expectation, variance, and covariance w.r.t.\ \(\prb\) are denoted by \(\Erw\), \(\Var\), and \(\cov\), respectively.

`Almost surely' is abbreviated `a.s.' and is to be understood w.r.t.\ \(\prb\); the rare occurrence of `a.e.' means `almost everywhere,' the reference measure always being explicit; `i.i.d.'\ means `independent and identically distributed.'

Convergence in distribution is indicated by \(\KiVert{}\); \(\GlVert\) means equality in distribution. The formula \(X \sim \mu\) means, the random variable \(X\) has probability distribution \(\mu\). The normal (Gaussian) distribution with mean \(m\) and variance (or covariance matrix) \(s^2\) is written \(\Normal(m, s^2)\).

% % % % % % % % % % % % % % % % % % % % % % % % %
\subsection{Basics from probability and large deviations theory}
% % % % % % % % % % % % % % % % % % % % % % % % %
\label{SecLDPProb}

Let $d \in \N $ and $( \xi_n)_{n \in \N}$ be a sequence of $\R^d$\=/valued random variables and let $(s_n)_{n \in \N}$ be a sequence of real numbers tending to infinity. A function \(\I \colon \R^d \to [0, \infty]\) is called a good rate function (GRF) iff \(\I\) is not constant \(\infty\) and the sublevel set \(\I^{-1}([0, a])\) is compact for any \(a \in [0, \infty)\). We say that $( \xi_n)_{ n \in \N}$ satisfies a large deviations principle (LDP) in $\R^d$ at speed $(s_n)_{n \in \N }$ with GRF \(\I\) if and only if
\begin{equation}
	\label{EqIntrLDP}
	-\inf_{ x \in A^{ \circ }} \I( x) \leq \liminf_{ n \rightarrow \infty } \frac{1}{s_n} \log \prb[ \xi_n \in A] \leq \limsup_{ n \rightarrow \infty } \frac{1}{s_n} \log \prb[ \xi_n \in A] \leq - \inf_{ x \in  \overline{A}} \I(x)
\end{equation}
for all $A\in \Borel(\R^d)$, where \(A^\circ\) and \(\overline{A}\) denote the interior and the closure of \(A\), respectively.

There are different ways to show that a certain sequence of random variables satisfies an LDP, one of the most commonly used is the so called contraction principle (see, e.g., Theorem 4.2.1 in \cite{DZ2011}).

\begin{proposition}[Contraction principle]
	\label{LemContractionPrinciple}
	Let $d,m\in\N$, let $f\colon \R^d \rightarrow \R^m$ be a continuous function, and let $( \xi_n)_{ n \in \N}$ be a sequence of \(\R^d\)\=/valued random variables that satisfies an LDP at speed $(s_n)_{n \in \N }$ with GRF $\I \colon \R^d \rightarrow [0, \infty ]$. Then, the sequence $( f( \xi_n))_{n \in \N}$ satisfies an LDP in $\R^m$ at speed $(s_n)_{n \in \N}$ with the GRF $\mathbb{J} \colon \R^m \rightarrow [0, \infty]$ given by
	\begin{equation*}
		\mathbb{J}(y) := \inf_{ x \in f^{-1}( \{ y \})} \I(x).
	\end{equation*}
\end{proposition}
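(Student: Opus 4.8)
\emph{Proof proposal.} The plan is to deduce the LDP for $(f(\xi_n))_{n \in \N}$ directly from the one for $(\xi_n)_{n \in \N}$, using only two elementary facts about the continuous map $f$: that $f^{-1}$ sends open sets to open sets and closed sets to closed sets, and the decomposition $f^{-1}(B) = \bigcup_{y \in B} f^{-1}(\{y\})$, valid for every $B \in \Borel(\R^m)$, which yields at once
\[
\inf_{x \in f^{-1}(B)} \I(x) \;=\; \inf_{y \in B} \, \inf_{x \in f^{-1}(\{y\})} \I(x) \;=\; \inf_{y \in B} \mathbb{J}(y).
\]
Given $B \in \Borel(\R^m)$, I would apply hypothesis~\eqref{EqIntrLDP} to the Borel set $A := f^{-1}(B) \in \Borel(\R^d)$, noting that $\prb[f(\xi_n) \in B] = \prb[\xi_n \in A]$, and then translate the $\I$\-/infima over $A^\circ$ and $\overline{A}$ into $\mathbb{J}$\-/infima over $B^\circ$ and $\overline{B}$.

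For the upper bound, the set $f^{-1}(\overline{B})$ is closed (preimage of a closed set under the continuous $f$) and contains $A$, hence contains $\overline{A}$; therefore $\inf_{x \in \overline{A}} \I(x) \geq \inf_{x \in f^{-1}(\overline{B})} \I(x) = \inf_{y \in \overline{B}} \mathbb{J}(y)$, and the rightmost inequality in~\eqref{EqIntrLDP} gives $\limsup_{n} \frac{1}{s_n} \log \prb[f(\xi_n) \in B] \leq -\inf_{y \in \overline{B}} \mathbb{J}(y)$. Symmetrically, $f^{-1}(B^\circ)$ is open and contained in $A$, hence contained in $A^\circ$, so $\inf_{x \in A^\circ} \I(x) \leq \inf_{x \in f^{-1}(B^\circ)} \I(x) = \inf_{y \in B^\circ} \mathbb{J}(y)$, and the leftmost inequality in~\eqref{EqIntrLDP} gives $\liminf_{n} \frac{1}{s_n} \log \prb[f(\xi_n) \in B] \geq -\inf_{y \in B^\circ} \mathbb{J}(y)$. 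These are exactly the two defining inequalities of an LDP for $(f(\xi_n))_{n \in \N}$ in $\R^m$ at speed $(s_n)_{n \in \N}$ with rate function $\mathbb{J}$.

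It remains to verify that $\mathbb{J}$ is a good rate function. It is not identically $+\infty$: picking $x_0$ with $\I(x_0) < \infty$ (such $x_0$ exists since $\I$ is a GRF) gives $\mathbb{J}(f(x_0)) \leq \I(x_0) < \infty$. For the compactness of sublevel sets I would establish the identity $\mathbb{J}^{-1}([0,a]) = f\bigl(\I^{-1}([0,a])\bigr)$ for each $a \in [0,\infty)$. The inclusion $\supseteq$ is immediate from the definition of $\mathbb{J}$. For $\subseteq$, given $y$ with $\mathbb{J}(y) \leq a$, choose $x_k \in f^{-1}(\{y\})$ with $\I(x_k) \to \mathbb{J}(y)$; for $k$ large, $x_k$ lies in the compact set $\I^{-1}([0,a+1])$, so a subsequence converges, $x_{k_j} \to x^\ast$, whence $f(x^\ast) = y$ by continuity of $f$ and $\I(x^\ast) \leq \liminf_j \I(x_{k_j}) = \mathbb{J}(y) \leq a$ by lower semicontinuity of $\I$ (which holds because the sublevel sets $\I^{-1}([0,a])$ of a GRF are compact, hence closed). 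Thus $\mathbb{J}^{-1}([0,a])$ is the image of the compact set $\I^{-1}([0,a])$ under the continuous map $f$, hence compact.

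The only step demanding some care is this last one, where the compactness of the sublevel sets of $\mathbb{J}$ is extracted from the interplay of the compact sublevel sets of $\I$, the lower semicontinuity of $\I$, and the continuity of $f$; everything else is routine bookkeeping with preimages and infima.
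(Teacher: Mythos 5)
The paper does not prove this proposition; it is stated with a citation to Dembo and Zeitouni~\cite[Theorem~4.2.1]{DZ2011}, so there is no in\-/text proof to compare against. Your argument is correct and is essentially the standard textbook proof: you deduce the two defining inequalities of the LDP for $(f(\xi_n))_n$ from those for $(\xi_n)_n$ by passing through $A = f^{-1}(B)$, using only that $\overline{A} \subseteq f^{-1}(\overline{B})$ and $f^{-1}(B^\circ) \subseteq A^\circ$ by continuity, together with the identity $\inf_{f^{-1}(B)} \I = \inf_B \mathbb{J}$. The one place requiring genuine care --- establishing $\mathbb{J}^{-1}([0,a]) = f(\I^{-1}([0,a]))$, and in particular the inclusion $\subseteq$ when the infimum defining $\mathbb{J}(y)$ is not attained --- is handled correctly: you pass to a minimizing sequence, park it in the compact set $\I^{-1}([0,a+1])$, extract a convergent subsequence, and close the argument with continuity of $f$ and lower semicontinuity of $\I$ (the latter being a consequence of compact, hence closed, sublevel sets as in the paper's definition of a GRF). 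The verification that $\mathbb{J}$ is not identically $+\infty$ is also in order. No gaps.
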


We shall also use Cram\'er's large deviations theorem for $\R^d$\=/valued random variables (see, e.g., \cite[Corollary 6.1.6]{DZ2011}).

\begin{proposition}[Cram\'er's theorem]
	\label{ThmCramer}
	Let $(X_i)_{i \in \N}$ be a sequence of i.i.d.\ $\R^d$\=/valued random variables, write \(\Lambda(t) := \log \Erw[\ee^{\langle t, X_1 \rangle}]\) for \(t \in \R^d\), and suppose that $0 \in \mathcal{D}_{ \Lambda}^{ \circ }$, where
	\begin{equation*}
		\mathcal{D}_{ \Lambda} := \bigl\{ t \in \R^d \Colon \Lambda(t) < \infty \bigr\}.
	\end{equation*}
	Then, the sequence $(\xi_n)_{n \in \N}$ defined by
	\begin{equation*}
		\xi_n := \frac{1}{n} \sum_{i = 1}^n X_i, \quad n \in \N,
	\end{equation*}
	satisfies an LDP in $\R^d$ at speed $n$ with GRF $\Lambda^* \colon \R^d \rightarrow [0, \infty]$, where
	\begin{equation*}
		\Lambda^*(x) := \sup_{ t \in \R^d} \bigl[ \langle x, t \rangle - \Lambda(t) \bigr]
	\end{equation*}
	is the \emph{Fenchel\--Legendre transform} of \(\Lambda\).
\end{proposition}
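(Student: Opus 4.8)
The plan is to establish the two inequalities in~\eqref{EqIntrLDP} separately, following the classical route for Cram\'er's theorem in $\R^d$. Write $\mu$ for the law of $X_1$. Since $\Lambda$ is a logarithmic moment generating function it is convex and lower semicontinuous, hence so is its Fenchel--Legendre transform $\Lambda^*$; the hypothesis $0 \in \mathcal{D}_\Lambda^\circ$ makes $\Lambda$ finite on a ball about the origin, which forces $\Lambda^*$ to grow superlinearly and therefore to have compact sublevel sets, so that $\Lambda^*$ is a good rate function. The same hypothesis yields \emph{exponential tightness}: applying Markov's inequality to $\ee^{\langle t, \sum_{i=1}^n X_i\rangle}$ along the coordinate axes gives, for a suitable $\delta > 0$, a bound of the form $\prb[\xi_n \notin [-L,L]^d] \leq 2d\,\ee^{-n(\delta L - \Lambda_0)}$ with $\Lambda_0 = \max_{j,\pm}\Lambda(\pm\delta e_j)$, whence $\limsup_{n} \frac1n \log \prb[\xi_n \notin K_L] \to -\infty$ as $L \to \infty$ through the compacts $K_L = [-L, L]^d$.

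For the \textbf{upper bound}, the workhorse is the exponential Chebyshev (Chernoff) estimate: for any $t \in \R^d$ and any Borel set $A$,
\begin{equation*}
\frac1n \log \prb[\xi_n \in A] \;\leq\; \Lambda(t) - \inf_{x \in A} \langle t, x\rangle,
\end{equation*}
which follows from $\Erw[\ee^{\langle t, \sum_i X_i\rangle}] = \ee^{n\Lambda(t)}$ together with the fact that $\langle t, \sum_i X_i\rangle \geq n \inf_{x\in A}\langle t,x\rangle$ on $\{\xi_n \in A\}$. One first treats a compact set $F$: for each $x \in F$ pick $t_x$ with $\langle t_x, x\rangle - \Lambda(t_x) \geq \min\{\Lambda^*(x), 1/\varepsilon\} - \varepsilon$ and a ball $B_x \ni x$ with $\inf_{y\in B_x}\langle t_x, y\rangle \geq \langle t_x, x\rangle - \varepsilon$, extract a finite subcover $B_{x_1}, \dotsc, B_{x_m}$, and combine the per\-/ball Chernoff bounds via subadditivity of $\prb$ and the estimate $\log(\sum_j a_j) \leq \log m + \max_j \log a_j$; letting $\varepsilon \downarrow 0$ gives $\limsup_n \frac1n \log \prb[\xi_n \in F] \leq -\inf_{x\in F}\Lambda^*(x)$. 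For a general closed $F$ one splits $\prb[\xi_n\in F] \leq \prb[\xi_n \in F\cap K_L] + \prb[\xi_n \notin K_L]$ and combines the compact case with exponential tightness, letting $L \to \infty$.

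For the \textbf{lower bound} it suffices to show that for every open $G$ and every $x \in G$ with $\Lambda^*(x) < \infty$ one has $\liminf_n \frac1n \log \prb[\xi_n \in G] \geq -\Lambda^*(x)$; the supremum over $x \in G$ then gives the claim. Fix such an $x$ and first assume that $x$ is an \emph{exposed point} of $\Lambda^*$ with exposing hyperplane $\eta \in \mathcal{D}_\Lambda^\circ$, so that $\Lambda^*(x) = \langle \eta, x\rangle - \Lambda(\eta)$. Introduce the tilted probability measure $\tilde\mu(\dd y) := \ee^{\langle \eta, y\rangle - \Lambda(\eta)}\,\mu(\dd y)$, whose mean equals $\nabla\Lambda(\eta) = x$ (this is where $\eta$ being interior to $\mathcal{D}_\Lambda$ and the exposing property are used). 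If $\tilde\xi_n$ denotes the empirical mean of i.i.d.\ $\tilde\mu$\=/variables, then for $\delta > 0$ with $B(x,\delta)\subseteq G$ undoing the change of measure yields
\begin{equation*}
\prb[\xi_n \in B(x,\delta)] \;\geq\; \ee^{-n\langle\eta,x\rangle + n\Lambda(\eta) - n\lVert\eta\rVert\delta}\; \tilde{\prb}\bigl[\tilde\xi_n \in B(x,\delta)\bigr],
\end{equation*}
and since $\tilde{\prb}[\tilde\xi_n \in B(x,\delta)] \to 1$ by the weak law of large numbers, we obtain $\liminf_n \frac1n\log\prb[\xi_n\in G] \geq -\Lambda^*(x) - \lVert\eta\rVert\delta$; letting $\delta \downarrow 0$ (legitimate because the left side does not depend on $\delta$) removes the error term. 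The remaining, genuinely delicate point is to pass from exposed points to arbitrary points of $G$: by a convex\-/analysis lemma of Rockafellar (see \cite{DZ2011}) the infimum of $\Lambda^*$ over the open set $G$ is not changed if one restricts to the set $\mathcal{F}$ of exposed points whose exposing hyperplane lies in $\mathcal{D}_\Lambda^\circ$, i.e.\ $\inf_{x\in G}\Lambda^*(x) = \inf_{x\in G\cap\mathcal{F}}\Lambda^*(x)$, which closes the argument. \textbf{This convex-analytic reduction}---and, hand in hand with it, the care needed when $\mathcal{D}_\Lambda \neq \R^d$ so that some relevant minimizing point fails to be exposed by an interior hyperplane---is the main obstacle; the rest is a direct application of Markov's inequality and the law of large numbers.
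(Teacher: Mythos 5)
The paper does not prove Proposition~\ref{ThmCramer}: it is stated as a standard preliminary result and cited directly from Dembo--Zeitouni (Corollary~6.1.6 in \cite{DZ2011}), so there is no paper\-/internal proof against which to compare. That said, your reconstruction is worth examining on its merits, and it contains a genuine gap precisely at the point you yourself flag as ``the main obstacle.''

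Your upper bound (Chernoff, finite subcover for compacts, exponential tightness to pass to closed sets) is the classical argument and is sound under the stated hypothesis \(0 \in \mathcal{D}_\Lambda^\circ\). The problem is the lower bound. The Rockafellar\-/type reduction you invoke --- that \(\inf_{x \in G} \Lambda^*(x)\) is unchanged when \(G\) is replaced by \(G\) intersected with the set of exposed points having exposing hyperplanes in \(\mathcal{D}_\Lambda^\circ\) --- is Lemma~2.3.12 in \cite{DZ2011}, and its hypotheses are \emph{stronger} than \(0 \in \mathcal{D}_\Lambda^\circ\): it requires \(\Lambda\) to be \emph{essentially smooth}, in particular \emph{steep} (\(\lVert \nabla\Lambda(\eta_n) \rVert \to \infty\) whenever \(\eta_n\) tends to a boundary point of \(\mathcal{D}_\Lambda\)). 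Steepness is automatic when \(\mathcal{D}_\Lambda = \R^d\) (the setting of \cite[Theorem~2.2.30]{DZ2011}, which is the proof you are reproducing), but it can fail when \(\mathcal{D}_\Lambda\) is a proper subset even though \(0\) lies in its interior; a one\-/dimensional counterexample is a density proportional to \((1 + \lvert x \rvert)^{-\alpha} \ee^{-\lvert x \rvert}\) with \(\alpha\) large, for which \(\Lambda\) is finite on \([-1, 1]\) with bounded derivative at the endpoints. In such cases the supply of exposed points with interior exposing hyperplanes is too thin, and the tilting argument alone cannot deliver the lower bound at all relevant points.

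The reference the paper actually cites, \cite[Corollary~6.1.6]{DZ2011}, sidesteps this by an entirely different route: the abstract Cram\'er theorem for locally convex spaces (\cite[Theorem~6.1.3]{DZ2011}), whose lower bound is proved via sub\-/additivity of \(n \mapsto -\log \prb[\xi_n \in C]\) for convex open \(C\) together with a Hahn\--Banach argument, giving a weak LDP with rate \(\Lambda^*\) under no integrability assumption at all; the hypothesis \(0 \in \mathcal{D}_\Lambda^\circ\) is then used only to supply exponential tightness, upgrading the weak LDP to a full one. So either restrict your lower\-/bound argument to the case \(\mathcal{D}_\Lambda = \R^d\) (matching \cite[Theorem~2.2.30]{DZ2011} but weaker than the stated proposition), or replace the exposed\-/point reduction by the sub\-/additivity route in order to cover the stated hypothesis \(0 \in \mathcal{D}_\Lambda^\circ\).
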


Two sequences of $ \R^d$\=/valued random variables $(\xi_n)_{n \in \N}$ and $(\eta_n)_{n \in \N}$ are said to be exponentially equivalent at speed $(s_n)_{n \in \N}$ if and only if we have, for all $\epsilon > 0$,
\begin{equation}
	\label{EqExpEquivalence}
	\lim_{n \rightarrow \infty} \frac{1}{s_n} \log \prb\bigl[ \bigl\lVert \xi_n - \eta_n \bigr\rVert_2 > \epsilon \bigr] = -\infty.
\end{equation} 

The following result can be found, e.g., in \cite[Theorem 4.2.13]{DZ2011}; it states that if a sequence of random vectors satisfies an LDP and is exponentially equivalent to another sequence of random vectors, then both satisfy the same LDP.

\begin{proposition}
	\label{PropExpEquLDP}
	Let $(\xi_n)_{n \in \N}$ and $(\eta_n)_{n \in \N}$ be sequences of $\R^d$\=/valued random variables. Assume that $(\xi_n)_{n \in \N}$ satisfies an LDP at speed $(s_n)_{n \in \N}$ with GRF\ $\I \colon \R^d \rightarrow [0, \infty]$ and that $(\xi_n)_{n \in \N}$ and $(\eta_n)_{n \in \N}$ are exponentially equivalent at speed $(s_n)_{n \in \N}$. Then, $(\eta_n)_{n \in \N}$ satisfies an LDP at speed $(s_n)_{n \in \N}$ with the same GRF\ $\I \colon \R^d \rightarrow [0, \infty]$.
\end{proposition}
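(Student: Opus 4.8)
The plan is to reduce everything to the standard equivalent formulation of the LDP and transfer the two halves separately, using only that the discrepancy event $\{\lVert\xi_n-\eta_n\rVert_2>\epsilon\}$ is superexponentially unlikely at speed $(s_n)$. So I would first recall that $(\eta_n)_{n\in\N}$ satisfies the LDP with GRF $\I$ if and only if $\limsup_{n\to\infty}\frac{1}{s_n}\log\prb[\eta_n\in F]\le-\inf_{x\in F}\I(x)$ for every closed $F$ and $\liminf_{n\to\infty}\frac{1}{s_n}\log\prb[\eta_n\in G]\ge-\inf_{x\in G}\I(x)$ for every open $G$; these two bounds are what I would verify, the LDP for $(\xi_n)_{n\in\N}$ giving me the corresponding statements for $\xi_n$.

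For the closed\=/set upper bound, given closed $F\subseteq\R^d$ and $\delta>0$ I would introduce the closed enlargement $F^\delta:=\{x\in\R^d\Colon\operatorname{dist}(x,F)\le\delta\}$ and observe the inclusion $\{\eta_n\in F\}\subseteq\{\xi_n\in F^\delta\}\cup\{\lVert\xi_n-\eta_n\rVert_2>\delta\}$. A union bound together with the elementary ``principle of the largest term'' — i.e.\ $\limsup_{n}\frac{1}{s_n}\log(a_n+b_n)=\max\bigl(\limsup_n\frac{1}{s_n}\log a_n,\ \limsup_n\frac{1}{s_n}\log b_n\bigr)$ for nonnegative $a_n,b_n$ — then yields $\limsup_{n\to\infty}\frac{1}{s_n}\log\prb[\eta_n\in F]\le\max\{-\inf_{x\in F^\delta}\I(x),\ -\infty\}=-\inf_{x\in F^\delta}\I(x)$, where I used the LDP upper bound for $(\xi_n)_{n\in\N}$ on the closed set $F^\delta$ and exponential equivalence (Equation~\eqref{EqExpEquivalence}) on the second event. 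Finally I would let $\delta\downarrow0$ and check that $\inf_{x\in F^\delta}\I(x)\uparrow\inf_{x\in F}\I(x)$: this is precisely where the hypothesis that $\I$ is a \emph{good} rate function is used, for if $a<\inf_{x\in F}\I(x)$ then the sublevel set $\I^{-1}([0,a])$ is compact and disjoint from the closed set $F$, hence at positive distance $\delta_0$ from $F$, so $\inf_{x\in F^\delta}\I(x)\ge a$ for all $\delta<\delta_0$.

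For the open\=/set lower bound, fix an open $G\subseteq\R^d$ and a point $x\in G$ with $\I(x)<\infty$ (if there is no such point then $-\inf_{y\in G}\I(y)=-\infty$ and there is nothing to prove). I would choose $\delta>0$ with the open ball $B(x,2\delta)\subseteq G$, so that $\{\xi_n\in B(x,\delta)\}\subseteq\{\eta_n\in G\}\cup\{\lVert\xi_n-\eta_n\rVert_2>\delta\}$ and hence $\prb[\eta_n\in G]\ge\prb[\xi_n\in B(x,\delta)]-\prb[\lVert\xi_n-\eta_n\rVert_2>\delta]$. Since $\liminf_{n\to\infty}\frac{1}{s_n}\log\prb[\xi_n\in B(x,\delta)]\ge-\inf_{y\in B(x,\delta)}\I(y)\ge-\I(x)>-\infty$ by the LDP lower bound for $(\xi_n)_{n\in\N}$, while $\prb[\lVert\xi_n-\eta_n\rVert_2>\delta]$ decays superexponentially, the subtracted term is negligible: for all large $n$ (here one uses $s_n\to\infty$) it is at most, say, half of $\prb[\xi_n\in B(x,\delta)]$, whence $\liminf_{n\to\infty}\frac{1}{s_n}\log\prb[\eta_n\in G]\ge-\I(x)$. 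Taking the supremum over all admissible $x\in G$ gives $\liminf_{n\to\infty}\frac{1}{s_n}\log\prb[\eta_n\in G]\ge-\inf_{y\in G}\I(y)$, and combining this with the closed\=/set bound completes the proof.

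The argument is essentially routine bookkeeping. The only step that genuinely requires $\I$ to be a good rate function rather than an arbitrary nonnegative function is the interchange $\lim_{\delta\downarrow0}\inf_{x\in F^\delta}\I(x)=\inf_{x\in F}\I(x)$ in the closed\=/set bound — without lower semicontinuity (equivalently, compactness of the sublevel sets) the enlarged infima need not return to $\inf_{x\in F}\I(x)$ in the limit — so that is the point I would treat most carefully; the manipulations with the principle of the largest term and with the additive error in the open\=/set bound are standard.
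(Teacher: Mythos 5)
The paper does not prove this proposition; it merely cites \cite[Theorem 4.2.13]{DZ2011}, so there is no in-paper argument to compare against. Your proof is correct and is essentially the standard textbook derivation of that cited result: for the upper bound, you enclose $\{\eta_n\in F\}$ in $\{\xi_n\in F^\delta\}\cup\{\lVert\xi_n-\eta_n\rVert_2>\delta\}$, apply the principle of the largest term, and use compactness of the sublevel sets of $\I$ (and closedness of $F$) to justify $\inf_{F^\delta}\I\uparrow\inf_F\I$ as $\delta\downarrow 0$; for the lower bound, you localize to a ball $B(x,\delta)$ with $B(x,2\delta)\subseteq G$ and absorb the superexponentially small discrepancy probability, using $s_n\to\infty$ (which the paper's definition of LDP does assume) to discard the multiplicative constant. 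You correctly pinpoint the one place where goodness, rather than mere lower semicontinuity, is indispensable, and the edge cases ($\inf_G\I=\infty$, empty $F$) are handled; I see no gaps.
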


We say that a sequence $(\xi_n)_{n \in \N}$ of $\R^d$\=/valued random variables satisfies a moderate deviations principle (MDP) if and only if $\bigl( \frac{\xi_n}{\sqrt{n} b_n} \bigr)_{n \in \N} $ satisfies an LDP at speed $(b_n^2)_{n \in \N}$ with some GRF $\I \colon \R^d \rightarrow [0, \infty]$ for some positive sequence $(b_n)_{n\in\N}$ satisfying $\lim_{n \rightarrow \infty} b_n = \infty$ and $\lim_{n \rightarrow \infty} \frac{b_n}{\sqrt{n}}=0$.
The scaling $\sqrt{n} b_n$ typically grows faster than that in a central limit theorem but slower than that in a law of large numbers (and hence in an ``ordinary'' LDP). This property of an MDP is nicely illustrated by the following Cram\'er\-/type theorem (see, e.g., \cite[Theorem 3.7.1]{DZ2011}).

\begin{proposition}
	\label{ThmCramerMDP}
	Let $d \in \N$ and let $(X_i)_{i \in \N }$ be a sequence of i.i.d.\ $\R^d$\=/valued random variables such that \(0 \in \mathcal{D}_\Lambda^\circ\), as in Proposition~\ref{ThmCramer}, and suppose $\Erw[X_1]=0$ and that $\Sigma$, the covariance matrix of $X_1$, is invertible. Let $(b_n)_{n \in \N}$ be a sequence of real numbers with
	\begin{equation*}
		\lim_{n \rightarrow \infty} b_n = \infty \quad \text{ and } \quad \lim_{n \rightarrow \infty} \frac{b_n}{ \sqrt{n}} = 0.
	\end{equation*}
	Then, the sequence $(\xi_n)_{n \in \N}$ with $\xi_n := \frac{1}{b_n \sqrt{n}} \sum_{i=1}^n X_i$ satisfies an LDP in $\R^d$ at speed $( b_n^2)_{n \in \N}$ with GRF $\I \colon \R^d \rightarrow [0, \infty]$ given by
	\begin{equation*}
		\I(x) := \frac{1}{2} \langle x, \Sigma^{-1 } x \rangle, \quad x \in \R^d.
	\end{equation*}
\end{proposition}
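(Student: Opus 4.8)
The plan is to obtain this as a direct application of the Gärtner\--Ellis theorem (see, e.g., \cite[Theorem~2.3.6]{DZ2011}) at speed $(b_n^2)_{n \in \N}$, the only real work being to compute the limiting normalized cumulant generating function of the rescaled sums $\xi_n = \frac{1}{b_n \sqrt{n}} \sum_{i=1}^n X_i$ and to identify its Fenchel\--Legendre transform.

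First I would record the local regularity of $\Lambda$. Since $0 \in \mathcal{D}_\Lambda^\circ$ by hypothesis, the cumulant generating function $\Lambda(t) = \log \Erw[\ee^{\langle t, X_1 \rangle}]$ is finite and $C^\infty$ (indeed real\-/analytic) on a neighbourhood of the origin, and one may differentiate under the expectation to get $\Lambda(0) = 0$, $\nabla \Lambda(0) = \Erw[X_1] = 0$, and $\nabla^2 \Lambda(0) = \Sigma$. A second\-/order Taylor expansion then gives
\begin{equation*}
\Lambda(t) = \tfrac{1}{2} \langle t, \Sigma t \rangle + o(\lVert t \rVert^2) \qquad (t \to 0).
\end{equation*}
Next, for fixed $\lambda \in \R^d$, I would compute
\begin{equation*}
\Lambda_\infty(\lambda) := \lim_{n \to \infty} \frac{1}{b_n^2} \log \Erw\bigl[ \exp\bigl( b_n^2 \langle \lambda, \xi_n \rangle \bigr) \bigr] = \lim_{n \to \infty} \frac{n}{b_n^2} \Lambda\Bigl( \tfrac{b_n}{\sqrt{n}} \lambda \Bigr).
\end{equation*}
Because $b_n / \sqrt{n} \to 0$, the argument $\frac{b_n}{\sqrt{n}} \lambda$ tends to $0$, so the Taylor expansion applies and yields $\frac{n}{b_n^2} \Lambda\bigl( \frac{b_n}{\sqrt{n}} \lambda \bigr) = \frac{1}{2} \langle \lambda, \Sigma \lambda \rangle + \frac{n}{b_n^2} \cdot o\bigl( \frac{b_n^2}{n} \bigr) \to \frac{1}{2} \langle \lambda, \Sigma \lambda \rangle$. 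Hence $\Lambda_\infty(\lambda) = \frac{1}{2} \langle \lambda, \Sigma \lambda \rangle$ exists and is finite for every $\lambda \in \R^d$; in particular $\mathcal{D}_{\Lambda_\infty} = \R^d$, so $0$ lies in its interior, and $\Lambda_\infty$, being a finite quadratic form, is convex, lower semicontinuous, and essentially smooth (the steepness condition being vacuous since $\mathcal{D}_{\Lambda_\infty} = \R^d$). The Gärtner\--Ellis theorem therefore yields that $(\xi_n)_{n \in \N}$ satisfies an LDP at speed $(b_n^2)_{n \in \N}$ with good rate function $\Lambda_\infty^*$, the Fenchel\--Legendre transform of $\Lambda_\infty$.

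Finally I would identify the rate function explicitly: since $\Sigma$ is invertible and positive semidefinite, hence positive definite, the map $\lambda \mapsto \langle \lambda, x \rangle - \frac{1}{2} \langle \lambda, \Sigma \lambda \rangle$ is strictly concave with unique maximiser $\lambda = \Sigma^{-1} x$, so $\Lambda_\infty^*(x) = \frac{1}{2} \langle x, \Sigma^{-1} x \rangle$, which coincides with the claimed rate function $\I$ and is a good rate function because $\Sigma^{-1}$ is positive definite (compact sublevel sets). The only genuinely delicate point is the Taylor expansion step, i.e.\ justifying the smoothness of $\Lambda$ at the origin and the values $\nabla \Lambda(0) = 0$, $\nabla^2 \Lambda(0) = \Sigma$ from the hypothesis $0 \in \mathcal{D}_\Lambda^\circ$; once this is in place, the remaining verifications of the Gärtner\--Ellis hypotheses are automatic for a finite quadratic form, and the identification of $\Lambda_\infty^*$ is a routine convex\-/duality computation.
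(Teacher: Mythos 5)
Your proof is correct and is precisely the standard Gärtner--Ellis argument for the Cramér-type moderate deviations principle; the paper itself states this proposition without proof, citing Dembo--Zeitouni Theorem~3.7.1, whose proof proceeds along the same lines. All the delicate points you flag (smoothness of $\Lambda$ near the origin, $\nabla\Lambda(0)=0$, $\nabla^2\Lambda(0)=\Sigma$, essential smoothness of the finite quadratic limit, and the Legendre transform computation using positive definiteness of $\Sigma$) are handled correctly.
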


The following statement, going back to Nagaev~\cite{Nagaev1969:1} and generalized by Gantert, Ramanan, and Rembart~\cite{GantertRamananRembart2014}, gives an asymptotic bound for sums of i.i.d.\ random variables with \emph{stretched exponential} tail probabilities; its formulation uses the following notion: a function $\ell \colon \R \rightarrow \R_+$ is called slowly varying (at infinity) if we have, for all $a \in \R_+$,
\begin{equation*}
\lim_{x \rightarrow \infty} \frac{ \ell(a x)}{ \ell (x) } = 1.
\end{equation*}

\begin{proposition}
\label{LemStrechedExp}
Let $(X_i)_{i \in \N}$ be a sequence of i.i.d.\ positive random variables and assume that there exist slowly varying functions $c, b \colon \R \rightarrow \R_+ $ and constants $r \in (0,1)$ and \(t^* \in \R_+\) such that we have, for all $t \in [t^*, \infty)$,
\begin{equation*}
\prb\left[ X_1 \geq t \right] \leq c(t) \exp \left( - b(t) t^r \right).
\end{equation*}
Then, for $m := \Erw[X_1] $ and any $x \in (m, \infty)$, we get
\begin{equation*}
\limsup_{n \rightarrow \infty} \frac{1}{b(n) n^r} \log \left( \prb\left[ \frac{1}{n} \sum_{i=1}^n X_i \geq x \right] \right) \leq - (x -m)^r.
\end{equation*}
The analogous statement holds true for the lower tail \(\prb[X_1 \leq t]\) and \(\prb\bigl[ \frac{1}{n} \sum_{i = 1}^n X_i \leq x \bigr]\) with \(x < m\).
\end{proposition}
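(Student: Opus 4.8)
The plan is to follow the single\-/big\-/jump heuristic underlying Nagaev's theorem and its generalisation by Gantert, Ramanan, and Rembart: a sum of independent, subexponentially tailed summands exceeds a level essentially only because one summand does. I would treat the upper tail; the lower\-/tail statement is then entirely analogous (it is the upper\-/tail statement for the sequence \((-X_i)_{i \in \N}\)).

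Write \(y := x - m > 0\), fix \(\epsilon \in (0, y)\), and set \(S_n := \sum_{i=1}^n X_i\); the goal is \(\limsup_n \frac{1}{b(n) n^r} \log \prb[S_n \geq n x] \leq -(y - \epsilon)^r\), after which letting \(\epsilon \downarrow 0\) finishes the proof. First I would pick a truncation level \(h_n \to \infty\) growing slowly enough that \(h_n = o\bigl(n^{1-r}/b(n)\bigr)\) (e.g.\ \(h_n = n^{(1-r)/2}\)), which---using Potter's bounds for the slowly varying functions together with \(r < 1\)---guarantees both \(n\, \prb[X_1 > h_n] \to 0\) and \(n/h_n \gg n^r b(n)\). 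Splitting \(X_i = X_i \I[X_i \leq h_n] + X_i \I[X_i > h_n]\) and correspondingly \(S_n = \bar S_n + \hat S_n\), and using that truncation only removes mass (so \(\Erw[\bar S_n] \leq nm\)), one gets \(\{S_n \geq n x\} \subseteq \{\bar S_n - \Erw[\bar S_n] \geq \tfrac{1}{2}\epsilon n\} \cup \{\hat S_n \geq n(y - \epsilon)\}\). The first event is handled by Bernstein's inequality for the bounded variables \(X_i \I[X_i \leq h_n]\) (range \(h_n\), deviation \(\tfrac{1}{2}\epsilon n\), bounded variance since \(\Erw[X_1^2] < \infty\)), giving probability at most \(\exp(-c\,\epsilon n / h_n)\), which is negligible at speed \(b(n) n^r\) precisely because \(n/h_n \gg n^r b(n)\).

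It remains to bound \(\prb[\hat S_n \geq n(y - \epsilon)]\). Conditioning on \(N_n := \#\{i : X_i > h_n\} \sim \mathrm{Bin}(n, \prb[X_1 > h_n])\), one writes \(\prb[\hat S_n \geq w] = \sum_{k \geq 1} \prb[N_n = k]\, \prb[\sum_{j=1}^k Z_j \geq w]\) with \(w := n(y - \epsilon)\) and \(Z_1, Z_2, \dotsc\) i.i.d.\ copies of \(X_1\) conditioned on \(X_1 > h_n\). The \(k = 1\) term is at most \(n\, \prb[X_1 \geq n(y - \epsilon)]\), and by the uniform convergence theorem for slowly varying functions its logarithm divided by \(b(n) n^r\) tends to \(-(y - \epsilon)^r\)---the required estimate, produced exactly by the single big jump. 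For \(k \geq 2\) one must show the contribution is of no larger exponential order: since \(\Erw[Z_1] \asymp h_n\), for all \(k\) in the relevant range (\(k \ll n/h_n\)) the \(k-1\) non\-/extremal summands contribute only \(o(n)\), so \(\prb[\sum_{j=1}^k Z_j \geq w]\) is dominated by the one\-/big\-/jump event and is \(\lesssim k\, \prb[X_1 \geq w - (k-1)h_n]/\prb[X_1 > h_n]\); as \(\log \prb[X_1 > h_n] = o(n^r b(n))\) and \(w - (k-1)h_n \sim n(y - \epsilon)\), the \(k\)\textsuperscript{th} summand again has rate \(\leq -(y - \epsilon)^r\), while the binomial weights make the tail \(k > K_n\) of the sum super\-/exponentially small for a suitable \(K_n \to \infty\). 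Summing over \(k\) and letting \(\epsilon \downarrow 0\) yields the claim; it is the subadditivity of \(t \mapsto t^r\) for \(r \in (0,1)\) that makes all these configurations cost at least \(\approx w^r\) in the exponent.

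The step I expect to be the main obstacle is precisely the \(k \geq 2\) estimate, i.e.\ establishing \(\prb[\sum_{j=1}^k Z_j \geq w] \lesssim k\, \prb[X_1 \geq w - (k-1)h_n]/\prb[X_1 > h_n]\) with enough uniformity in \(k\): a naive union bound only gives \(k\, \prb[Z_1 \geq w/k]\), which loses a factor \(2^{-r} < 1\) in the exponent and is therefore insufficient, so one is forced into the finer, essentially inductive multi\-/scale bookkeeping (truncate the non\-/extremal summands, estimate their bulk by Bernstein, iterate) that forms the technical core of the Nagaev/Gantert--Ramanan--Rembart argument and leans on the subexponentiality of stretched\-/exponential laws. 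A pervasive but more routine difficulty is that the slowly varying prefactors \(c\) and \(b\) have to be controlled uniformly over the various ranges of their arguments, for which Potter's bounds and the uniform\-/convergence theorem are invoked repeatedly; throughout one keeps in mind that \(n^r b(n) \to \infty\) and \(\log n = o(n^r b(n))\), so that any polynomial\-/in\-/\(n\) prefactor is harmless at this speed.
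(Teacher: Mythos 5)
Note first that the paper does not prove Proposition~\ref{LemStrechedExp} at all: it quotes it as a known result, citing Nagaev's 1969 article and its generalisation by Gantert, Ramanan, and Rembart. So there is no ``paper's own proof'' to compare yours against, only a reference. You should therefore be judged on whether your sketch would in fact close the gap, not on fidelity to an argument the authors never give.

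Your outline does follow the single\-/big\-/jump blueprint that underlies the cited works, and the scaffolding is the right one: truncate at a level \(h_n\) with \(n/h_n \gg b(n) n^r\), control the bulk \(\bar S_n\) by Bernstein (legitimate, since the stretched\-/exponential tail gives \(\Erw[X_1^2] < \infty\)), and decompose the excess \(\hat S_n\) by conditioning on the number \(N_n\) of large summands. The scaling checks you mention (\(n\,\prb[X_1 > h_n] \to 0\), \(n^{(1+r)/2} \gg n^r b(n)\), \(\log n = o(b(n) n^r)\)) all go through for \(r \in (0,1)\) and \(b\) slowly varying.

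The genuine gap is the displayed estimate in your \(k \geq 2\) case, namely
\begin{equation*}
\prb\Biggl[\,\sum_{j=1}^k Z_j \geq w\,\Biggr] \;\leq\; C\, k\, \frac{\prb[X_1 \geq w - (k-1)h_n]}{\prb[X_1 > h_n]}.
\end{equation*}
This does not follow from the observation that the non\-/extremal summands each contribute at least \(h_n\). The event \(\{\sum_{j=1}^k Z_j \geq w\}\) splits into \(\{\max_j Z_j \geq w - (k-1)h_n\}\), which your bound does capture by a union bound, and the residual event in which every \(Z_j < w - (k-1)h_n\) but the sum nonetheless exceeds \(w\); the latter is entirely unaddressed by your inequality and is not a priori of smaller exponential order. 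You correctly flag in the closing sentence of that paragraph that a ``finer, essentially inductive multi\-/scale bookkeeping'' is needed, but as written the decisive inequality is asserted rather than derived, and precisely this residual event is where the induction (truncate the moderate summands at a second level, Bernstein the bulk again, iterate) has to be carried out. For the paper's purposes this is moot since the result is imported from the literature; for a self\-/contained argument you would need to reproduce the corresponding combinatorial lemma from Gantert--Ramanan--Rembart or carry out the multi\-/scale induction yourself. One further minor point: the ``analogous lower\-/tail'' assertion is not literally the upper\-/tail statement applied to \((-X_i)\), since the hypothesis there concerns \(\prb[X_1 \leq t]\) for a positive random variable rather than a left tail at \(-\infty\), so a sentence making the translation precise would be in order.
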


The following result is taken from \cite[Lemma 4.1]{APTGaussianFluct}; it allows to estimate the Kolmogorov distance of a perturbed random variable to a Gaussian random variable.

\begin{proposition}
\label{PropEstimateKolmogoroff}
Let $Y_1$, $Y_2$ be random variables, let $Z \sim \Normal(0, \sigma^2)$ with $\sigma^2 \in \R_+$, and let $\epsilon \in \R_+$. Then,
\begin{equation*}
	\sup_{t \in \R} \bigl\lvert \prb[Y_1 + Y_2 \geq t] - \prb[Z \geq t] \bigr\rvert \leq \sup_{t \in \R} \bigl\lvert \prb[Y_1 \geq t] - \prb[Z \geq t] \bigr\rvert + \prb[\lvert Y_2 \rvert > \epsilon] + \frac{\epsilon}{\sqrt{2 \pi \sigma^2}}.
\end{equation*}
\end{proposition}

% % % % % % % % % % % % % % % % % % % % % % % % %
\subsection{Measures on finite\-/dimensional \(\ell_p\) spaces}
\label{sec:meas_lp_spaces}
% % % % % % % % % % % % % % % % % % % % % % % % %

For \(p \in [1, \infty)\) (the case \(p \in (0, 1) \cup \{\infty\}\) is not relevant for the present article) and \(n \in \N\) the space \(\ell_p^n\) is \(\R^n\) endowed with the norm \(\lVert \cdot \rVert_p\) defined by
\begin{equation*}
\lVert (x_1, \dotsc, x_n) \rVert_p := \Biggl( \sum_{i = 1}^n \lvert x_i \rvert^p \Biggr)^{1/p};
\end{equation*}
the corresponding unit ball and unit sphere are denoted by
\begin{equation*}
\Kug{p}{n} := \{x \in \R^n \Colon \lVert x \rVert_p \leq 1\} \qquad \text{and} \qquad \Sph{p}{n - 1} := \{x \in \R^n \Colon \lVert x \rVert_p = 1\}.
\end{equation*}
The standard measure on \(\R^n\) is the \(n\)\=/dimensional Lebesgue measure \(\Leb^{(n)}\), defined on \(\Borel(\R^n)\); it induces the uniform distribution on \(\Kug{p}{n}\).

We equip $\Sph{p}{n-1}$ with the trace \(\sigma\)\=/algebra induced by \(\Borel(\R^n)\), denoted \(\Borel(\Sph{p}{n - 1})\). There are two meaningful ``uniform'' distributions on $\Sph{p}{n-1}$, namely the cone probability measure $\Keg^{(n)}_p$ and the surface probability measure $\Surf^{(n)}_p$. In the following, we will briefly discuss their theoretical foundation as well as the relation between these two distributions. The cone probability measure $\Keg^{(n)}_p$ of $A \in \Borel(\Sph{p}{n-1})$ is defined as
\begin{equation*}
	\Keg^{(n)}_p(A):= \frac{\Leb^{(n)}(\{ r a \Colon r \in [0,1], a \in A\})}{\Leb^{(n)}(\Kug{p}{n})}.
\end{equation*}  
From a result of Schechtman and Zinn~\cite{SchechtZinn}, and Rachev and R\"uschendorf~\cite{RachevRuesch}, we know that $\X \sim \Keg^{(n)}_p$ if and only if
\begin{equation}
	\label{EqProbRepSchechtmannZinn}
	\X \GlVert \frac{Y^{(n)}}{\lVert Y^{(n)}\rVert_p},
\end{equation} 
where $Y^{(n)} = (Y_i)_{i = 1}^n$ is a vector of i.i.d.\ random variables whose common distribution is $\Gaussp{p}$, the \(p\)\=/generalized Gaussian distribution, or \(p\)\=/Gaussian for short; it is defined by its Lebesgue-density,
\begin{equation}
	\label{EqDensitypGenGaussian}
	\frac{\dd \Gaussp{p}}{\dd x}(x) = \frac{1}{2 p^{1/p} \Gamma( 1 + \frac{1}{p})} \ee^{ - \lvert x\rvert^p/p}, \quad x \in \R.
\end{equation} 

The surface probability measure $\Surf^{(n)}_p$ is defined to be the $(n-1)$-dimensional Hausdorff probability measure or, equivalently, the $(n-1)$-dimensional normalized Riemannian volume measure on $\Sph{p}{n-1}$. We have the following relation between $\Keg^{(n)}_p$ and $\Surf^{(n)}_p$ (see \cite[Lemma 2]{NaorRomikProjSurfMeasure} and \cite[Lemma 2.2]{ArithGeoIneq}).

\begin{proposition}\label{PropSurfKegDichte}
Let $n \in \N$ and $p \in [1, \infty)$. Then, for all $x = (x_1, \dotsc, x_n) \in \Sph{p}{n-1}$, 
\begin{equation*}
	\frac{\dd\!\Surf^{(n)}_p}{\dd\!\Keg^{(n)}_p}(x) = C_{n,p} \Biggl( \sum_{i = 1}^n \lvert x_i \rvert^{2 p - 2} \Biggr)^{1/2},
\end{equation*}
where
\begin{equation*}
	C_{n,p}:= \Biggl( \int_{\Sph{p}{n-1}} \Biggl( \sum_{i = 1}^n \lvert \xi_i \rvert^{2 p - 2} \Biggr)^{1/2} \, \dd\!\Keg^{(n)}_p(\xi) \Biggr)^{-1}.
\end{equation*}
It then follows, for all \(x \in \Sph{p}{n - 1}\),
\begin{equation*}
	n^{-\lvert 1/p - 1/2 \rvert} \leq \frac{\dd\!\Surf_p^{(n)}}{\dd\!\Keg_p^{(n)}}(x) \leq n^{\lvert 1/p - 1/2 \rvert}.
\end{equation*}
\end{proposition}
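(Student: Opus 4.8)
The plan is to establish the two assertions separately: the explicit Radon--Nikodym density by a direct change-of-variables computation in a local graph chart for $\Sph{p}{n-1}$, and then the two-sided bound by a power-mean (norm-nesting) inequality on the sphere.

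For the density formula, I would fix the patch $U := \{x \in \Sph{p}{n-1} \Colon x_n > 0\}$ and parametrize it by $u = (x_1, \dotsc, x_{n-1})$ ranging over the open $\ell_p^{n-1}$ unit ball, with $x_n = f(u) := (1 - \lVert u \rVert_p^p)^{1/p}$. First I would compute $\partial_i f(u) = -\operatorname{sgn}(x_i)\,\lvert x_i \rvert^{p-1} x_n^{1-p}$, so that $1 + \lVert \nabla f(u) \rVert_2^2 = x_n^{2-2p} \sum_{i=1}^n \lvert x_i \rvert^{2p-2}$ and hence the $(n-1)$-dimensional Hausdorff (Riemannian volume) element on $U$ is $x_n^{1-p}\bigl( \sum_{i=1}^n \lvert x_i \rvert^{2p-2} \bigr)^{1/2}\,\dd u$. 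Next I would compute the Lebesgue content of the solid cone over a subset of $U$ via the map $(u, r) \mapsto r\,(u, f(u))$, whose Jacobian determinant equals $r^{n-1}\bigl( f(u) - \langle u, \nabla f(u) \rangle \bigr) = r^{n-1} x_n^{1-p}$, using $\langle u, \nabla f(u) \rangle = -x_n^{1-p}\sum_{i < n} \lvert x_i \rvert^p$ together with $\sum_{i=1}^n \lvert x_i \rvert^p = 1$; integrating $r$ over $(0,1)$ contributes a factor $\frac1n$. Forming the ratio of these two (unnormalized) measures, the common factor $x_n^{1-p}$ cancels, leaving a density proportional to $\bigl( \sum_{i=1}^n \lvert x_i \rvert^{2p-2} \bigr)^{1/2}$; the proportionality constant is then pinned down by the normalization $\int \frac{\dd\!\Surf_p^{(n)}}{\dd\!\Keg_p^{(n)}}\,\dd\!\Keg_p^{(n)} = 1$ to be exactly the $C_{n,p}$ in the statement. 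Finally, by the coordinate-permutation and reflection symmetries shared by $\Surf_p^{(n)}$ and $\Keg_p^{(n)}$, the same identity holds on each of the analogous patches, and these cover $\Sph{p}{n-1}$ up to a $\Keg_p^{(n)}$-null set, giving the formula $\Keg_p^{(n)}$-a.e.

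For the bound, I would write $g(x) := \bigl( \sum_{i=1}^n \lvert x_i \rvert^{2p-2} \bigr)^{1/2}$ and $v := (\lvert x_1 \rvert, \dotsc, \lvert x_n \rvert)$, so that $\lVert v \rVert_p = 1$ on $\Sph{p}{n-1}$ and, for $p > 1$, $g(x) = \lVert v \rVert_{2p-2}^{\,p-1}$. Then the standard nesting $\lVert v \rVert_{q'} \leq \lVert v \rVert_q \leq n^{1/q - 1/q'} \lVert v \rVert_{q'}$ for $0 < q < q'$, applied with $\{q, q'\} = \{p,\, 2p - 2\}$ (the order depending on whether $p < 2$ or $p > 2$) and then raised to the power $p-1$, shows after simplifying the exponent $(p-1)\bigl(\tfrac{1}{2p-2} - \tfrac1p\bigr) = \tfrac1p - \tfrac12$ that $g(x) \in \bigl[ \min(1, n^{1/p - 1/2}),\, \max(1, n^{1/p - 1/2}) \bigr]$ for \emph{every} $x \in \Sph{p}{n-1}$; the cases $p = 1$ (where $g = \sqrt{n}$ holds $\Keg_p^{(n)}$-a.e.) and $p = 2$ (where $g \equiv 1$) are immediate. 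Writing $A := \min(1, n^{1/p - 1/2})$ and $B := \max(1, n^{1/p - 1/2})$, so that $B/A = n^{\lvert 1/p - 1/2 \rvert}$, I would then observe that $C_{n,p}^{-1} = \int_{\Sph{p}{n-1}} g\, \dd\!\Keg_p^{(n)}$, being an average of values of $g$, also lies in $[A, B]$; hence $C_{n,p}\, g(x) = \frac{g(x)}{\int g\, \dd\!\Keg_p^{(n)}} \in [A/B, B/A] = \bigl[ n^{-\lvert 1/p - 1/2 \rvert},\, n^{\lvert 1/p - 1/2 \rvert} \bigr]$, which is the assertion.

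The graph-chart computation is essentially mechanical, so the only point requiring care is the last step of the bound: estimating $g(x)$ and $\int g\, \dd\!\Keg_p^{(n)}$ separately by $\max g$ and $\min g$ would only yield the exponent $2\lvert 1/p - 1/2 \rvert$, and the sharpening to $\lvert 1/p - 1/2 \rvert$ rests on the observation that for each fixed $p$ all the values of $g$ lie on one side of the reference value $1$, so that the interval $[A, B]$ they occupy already has ratio exactly $n^{\lvert 1/p - 1/2 \rvert}$ rather than its square.
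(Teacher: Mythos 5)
The paper does not actually prove Proposition~\ref{PropSurfKegDichte}; it is imported by citation from Naor--Romik (\cite[Lemma~2]{NaorRomikProjSurfMeasure}) and Kabluchko--Prochno--Vysotsky (\cite[Lemma~2.2]{ArithGeoIneq}). So there is no in-paper proof to compare against; what you have written is a self-contained derivation, and it is correct. The graph-chart computation is the standard route: the Riemannian surface element on the patch \(\{x_n>0\}\) is \(\sqrt{1+\lVert\nabla f\rVert_2^2}\,\dd u = x_n^{1-p}\bigl(\sum_i\lvert x_i\rvert^{2p-2}\bigr)^{1/2}\dd u\), the cone-measure Jacobian is \(r^{n-1}\bigl(f-\langle u,\nabla f\rangle\bigr)=r^{n-1}x_n^{1-p}\) (using \(\sum_i\lvert x_i\rvert^p=1\)), the factor \(x_n^{1-p}\) cancels in the ratio, and normalization fixes \(C_{n,p}\). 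This matches, in coordinates, what Naor--Romik express more compactly as \(\dd\Surf_p^{(n)}/\dd\Keg_p^{(n)}\propto\lVert\nabla\lVert\cdot\rVert_p\rVert_2\) via the coarea formula. Your bound argument is also sound: writing \(g=\lVert v\rVert_{2p-2}^{p-1}\) with \(\lVert v\rVert_p=1\), the power-mean nesting (valid also for quasi-norms, i.e.\ for \(2p-2<1\)) places \(g\) in the interval with endpoints \(1\) and \(n^{1/p-1/2}\), hence \(C_{n,p}^{-1}=\int g\,\dd\Keg_p^{(n)}\) lies in the same interval, and the quotient stays within \([n^{-\lvert1/p-1/2\rvert},\,n^{\lvert1/p-1/2\rvert}]\). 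Your closing remark is the right one to highlight: the key to getting exponent \(\lvert1/p-1/2\rvert\) rather than its double is that \(g\) lies on one definite side of \(1\) (depending on whether \(p\lessgtr 2\)), so the interval containing both \(g(x)\) and its average has ratio of endpoints exactly \(n^{\lvert1/p-1/2\rvert}\).
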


If $p \in \{1, 2\}$, there easily follows $\Surf^{(n)}_p=\Keg^{(n)}_p$. We remark that also in case $p = \infty $ we know $\Surf^{(n)}_{\infty}=\Keg^{(n)}_{\infty}$ (see \cite{NaorRomikProjSurfMeasure}). In contrast, for all $p \in (1, \infty)$ with $p \neq 2$, we have $\Surf^{(n)}_p \neq \Keg^{(n)}_p$. Nevertheless, for large $n \in \N$, one can prove that $\Surf^{(n)}_p$ and $\Keg^{(n)}_p$ are close in the total variation distance (see \cite[Theorem 2]{NaorRomikProjSurfMeasure}).

\begin{proposition}
\label{PropSurfMeasConeMeasClose}
For all $p \in [1, \infty)$ and all \(n \in \N\), we have
\begin{equation*}
	\lVert \Keg^{(n)}_p - \Surf^{(n)}_p \rVert_{\TV} := \sup_{ A \in \Borel(\Sph{p}{n-1}) } \lvert \Keg^{(n)}_p(A) - \Surf^{(n)}_p(A) \rvert \leq \frac{c_p}{\sqrt{n}},
\end{equation*} 
where $c_p \in \R_+$ depends only on $p$.
\end{proposition}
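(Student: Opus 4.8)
\medskip
\noindent\emph{Proof proposal.} The plan is to express the total variation distance through the density from Proposition~\ref{PropSurfKegDichte}, to rewrite the resulting integral as an expectation over i.i.d.\ $p$\=/Gaussian random variables via the representation~\eqref{EqProbRepSchechtmannZinn}, and then to estimate the coefficient of variation of the statistic that appears by a delta\=/method argument. Since $\Surf_p^{(n)} \ll \Keg_p^{(n)}$ with density $f := \dd\!\Surf_p^{(n)}/\dd\!\Keg_p^{(n)}$ integrating to $1$ against $\Keg_p^{(n)}$, the total variation distance equals $\frac12 \int \lvert f - 1 \rvert \, \dd\!\Keg_p^{(n)}$. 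Applying~\eqref{EqProbRepSchechtmannZinn} with $Y = (Y_1, \dotsc, Y_n)$ i.i.d.\ $p$\=/Gaussian and then the formula from Proposition~\ref{PropSurfKegDichte} (which also gives $C_{n,p} = 1/\Erw[W_n]$, again because $f$ integrates to $1$) yields
\begin{equation*}
\lVert \Keg_p^{(n)} - \Surf_p^{(n)} \rVert_{\TV} = \frac12 \, \Erw\biggl[ \biggl\lvert \frac{W_n}{\Erw[W_n]} - 1 \biggr\rvert \biggr] \leq \frac12 \cdot \frac{(\Var[W_n])^{1/2}}{\Erw[W_n]}, \qquad W_n := \frac{\bigl( \sum_{i=1}^n \lvert Y_i \rvert^{2p-2} \bigr)^{1/2}}{\lVert Y \rVert_p^{\,p-1}},
\end{equation*}
the inequality being Cauchy\--Schwarz. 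Hence it suffices to show $\Var[W_n] \leq (c_p^2/n)(\Erw[W_n])^2$. (For $p \in \{1, 2\}$ the density $f$ is constant and there is nothing to do, so one may assume $p \in (1, \infty) \setminus \{2\}$, although the argument below does not use this.)

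Write $A_n := \frac1n \sum_{i=1}^n \lvert Y_i \rvert^{2p-2}$ and $B_n := \frac1n \lVert Y \rVert_p^p$, so that $W_n = n^{1/p - 1/2} \, A_n^{1/2} B_n^{-(p-1)/p}$. Because the $p$\=/Gaussian distribution has moments of all orders, $a_p := \Erw[\lvert Y_1 \rvert^{2p-2}] \in (0, \infty)$, while $\Erw[\lvert Y_1 \rvert^p] = 1$ by the normalization of~\eqref{EqDensitypGenGaussian}; in particular $A_n \to a_p$ and $B_n \to 1$ a.s.\ and in every $L^q$. The map $(u, v) \mapsto u^{1/2} v^{-(p-1)/p}$ is smooth on a neighbourhood of $(a_p, 1)$ inside $\R_+^2$, so writing $W_n = n^{1/p - 1/2} a_p^{1/2} \, \psi(A_n, B_n)$ with $\psi(a_p, 1) = 1$, we have $\Var[W_n]/(\Erw[W_n])^2 = \Var[\psi(A_n, B_n)]/(\Erw[\psi(A_n, B_n)])^2$, and everything reduces to the bound
\begin{equation*}
\Erw\bigl[ (\psi(A_n, B_n) - 1)^2 \bigr] = O(1/n),
\end{equation*}
which moreover forces $\Erw[\psi(A_n, B_n)] \geq \frac12$ for all large $n$, so that the denominator is harmless (the finitely many small $n$ being absorbed into $c_p$ through $\lVert \cdot \rVert_{\TV} \leq 1$).

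To establish the last display I would split according to whether $(A_n, B_n)$ lies in a fixed compact neighbourhood $G \subset \R_+^2$ of $(a_p, 1)$ on which $\psi$ has bounded first derivatives. On $G$ a first\=/order Taylor estimate gives $\lvert \psi(A_n, B_n) - 1 \rvert \leq L_p (\lvert A_n - a_p \rvert + \lvert B_n - 1 \rvert)$, and since $A_n$ and $B_n$ are empirical averages of i.i.d.\ summands with finite variance, $\Erw[(A_n - a_p)^2] + \Erw[(B_n - 1)^2] = O(1/n)$, which controls the contribution of $G$. Off $G$ one uses the crude deterministic bound $W_n \leq \sqrt{n}$ (bound the numerator of $W_n$ above by $\sqrt{n}\,\max_i \lvert Y_i \rvert^{p-1}$ and the denominator below by $\max_i \lvert Y_i \rvert^{p-1}$, using $p \geq 1$), hence $\psi(A_n, B_n) \leq C_p \, n$ and $(\psi(A_n, B_n) - 1)^2 \leq C_p' \, n^2$ there, together with $\prb[(A_n, B_n) \notin G] = O(n^{-k})$ for any fixed $k$. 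The latter follows from Markov's inequality applied to a sufficiently high even central moment of $A_n$ and of $B_n$: these are finite because the $p$\=/Gaussian has moments of all orders, and the $2k$\=/th central moment of an average of $n$ i.i.d.\ summands is of order $n^{-k}$ by the Marcinkiewicz\--Zygmund (or Rosenthal) inequalities. Taking $k = 3$ shows the contribution off $G$ is $O(n^{-1})$ as well, and adding the two parts completes the argument.

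The main obstacle is not any single step but making the delta\=/method heuristic ``$\Var[W_n] \approx \Var[\text{linearization}] = O(1/n)$'' rigorous \emph{uniformly in $n$}: outside the typical region the rescaled statistic $\psi(A_n, B_n)$ genuinely grows with $n$, and one must balance a polynomial\=/in\=/$n$ bound on it against the $O(n^{-k})$ (for arbitrary $k$) decay of the probability of leaving $G$, which ultimately rests on the finiteness of all moments of the $p$\=/Gaussian distribution. Everything else is the standard Schechtman\--Zinn reduction combined with routine moment estimates.
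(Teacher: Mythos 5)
The paper does not prove this proposition itself; it is cited as \cite[Theorem~2]{NaorRomikProjSurfMeasure}. Your proposal is a self-contained proof and, as far as I can check, it is correct. The chain of reductions---writing the total variation as $\frac12 \Erw\bigl[ \lvert W_n/\Erw[W_n] - 1 \rvert \bigr]$ via the density from Proposition~\ref{PropSurfKegDichte} and the Schechtman--Zinn representation, bounding the $L^1$-distance by the normalized standard deviation, and then proving $\Var[W_n]/(\Erw[W_n])^2 = O(1/n)$ by a delta-method argument split over a compact good set $G$ and its complement---is sound. The ingredients are standard but correctly assembled: on $G$ the Lipschitz bound for $\psi$ plus $\Var[A_n], \Var[B_n] = O(1/n)$ suffices; off $G$ your deterministic bound $W_n \le \sqrt{n}$ (correct for $p \ge 1$, since $\lVert Y \rVert_p^{p-1} \ge \max_i \lvert Y_i \rvert^{p-1}$) yields a polynomial upper bound on $\psi$, which is beaten by the $O(n^{-k})$ Marcinkiewicz--Zygmund tail estimate for any $k$; and the bound $\lvert \Erw[\psi] - 1 \rvert \le \bigl( \Erw[(\psi-1)^2] \bigr)^{1/2} \to 0$ takes care of the denominator. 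This is very much in the spirit of the Naor--Romik proof, which also reduces to estimating the concentration of $\bigl( \sum \lvert Y_i \rvert^{2p-2} \bigr)^{1/2} / \lVert Y \rVert_p^{p-1}$ around its mean; the difference is that Naor and Romik push the computation further to obtain sharper, essentially optimal constants, whereas your argument is happy to hide all dependence inside $c_p$ --- which is precisely what the proposition as stated requires. One cosmetic remark: the $L^1 \le L^2$ step is usually attributed to Jensen rather than Cauchy--Schwarz, though both names are defensible.
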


Barthe, Gu\'edon, Mendelson, and Naor~\cite{Bartheetal} have introduced the following unifying generalization of both \(\U(\Kug{p}{n})\) and \(\Keg_p^{(n)}\): take a sequence \((Y_i)_{i \in \N}\) of i.i.d.\ \(p\)\=/Gaussian random variables and a random variable \(W\) taking values in \([0, \infty)\), independent of \((Y_i)_{i \in \N}\); let \(n \in \N\). Then the random variable
\begin{equation}\label{EqGenDistBGMN}
X := \frac{(Y_i)_{i = 1}^n}{\bigl( \lVert (Y_i)_{i = 1}^n \rVert_p^p + W \bigr)^{1/p}}
\end{equation}
takes values in \(\Kug{p}{n}\). If \(W = 0\) almost surely, then \(X \sim \Keg_p^{(n)}\) as is immediately seen by comparison with Equation~\eqref{EqProbRepSchechtmannZinn}. If \(W\) follows an exponential distribution with rate \(\frac{1}{p}\), then \(X \sim \U(\Kug{p}{n})\). A complete characterization of all distributions on \(\Kug{p}{n}\), which have the probabilistic representation~\eqref{EqGenDistBGMN}, is hitherto unknown. Since it had been devised, authors have been using this generalized \(p\)\=/radial distribution in several works, for instance Kabluchko, Prochno, and Th\"ale~\cite{KPT2019_II} or Kaufmann, Sambale, and Th\"ale~\cite{KauSamThae2022}.

The ansatz \eqref{EqGenDistBGMN} has inspired further generalization by Kaufmann and Th\"ale~\cite{KaufThaele2022} who have introduced a homogeneous factor to the joint density of \((Y_i)_{i = 1}^n\) in order to also take into account eigenvalues or singular values of random matrices in Schatten classes. A different route has been taken by Heiny, Johnston and Prochno~\cite{HeiJoPro2022}, who have considered random vectors of the form \(X = R \Theta\), where \(R\) and \(\Theta\) are independent, \(R \in [0, \infty)\) almost surely, and \(\Theta \sim \Surf_2^{(n)}\); note that the distribution of \(X\) is invariant under orthogonal transformations.

In the present article we take up the latter thread for further generalization: Let \((Y_i)_{i \in \N}\) be i.i.d.\ \(p\)\=/Gaussian as before, let \(n \in \N\), and let \(R^{(n)}\) be an a.s.\ nonnegative real\-/valued random variable, independent of \((Y_i)_{i \in \N}\). Then we denote by \(\GenDist\) the distribution of the random vector
\begin{equation}\label{EqGenDistFJP}
\X := R^{(n)} \, \frac{(Y_i)_{i = 1}^n}{\lVert (Y_i)_{i = 1}^n \rVert_p},
\end{equation}
or, since \(\Theta_p^{(n)} := \frac{(Y_i)_{i = 1}^n}{\lVert (Y_i)_{i = 1}^n \rVert_p} \sim \Keg_p^{(n)}\) by Schechtman and Zinn, equivalently
\begin{equation*}
\X = R^{(n)} \Theta_p^{(n)}.
\end{equation*}
Obviously \(\lVert \X \rVert_p = R^{(n)}\) a.s.; if \(R^{(n)} = 1\) a.s., then \(\X \sim \Keg_p^{(n)}\), and if \(R^{(n)} \leq 1\) a.s., then \(\GenDist\) is a probability distribution on \(\Kug{p}{n}\). The following proposition collects some basic facts about \(\GenDist\); the proof is given in Section~\ref{ProofGenDistFJP}.

\begin{proposition}\label{PropGenDistFJP}
Let \(n \in \N\) and \(p \in [1, \infty)\).
\begin{compactenum}
\item Let \(\mu\) be a probability measure on \(\R^n\), then \(\mu = \GenDist\) for some a.s.\ nonnegative random variable \(R^{(n)}\) if and only if there exists a probability measure \(\rho\) on \([0, \infty)\) such that for any nonnegative measurable map \(f \colon \R^n \to \R\) the following polar integration formula is valid,
\begin{equation*}
\int_{\R^n} f \, \dd\mu = \int_{[0, \infty)} \int_{\Sph{p}{n - 1}} f(r \theta) \, \dd\!\Keg_p^{(n)}(\theta) \, \dd\rho(r),
\end{equation*}
and then \(\rho\) is the distribution of \(R^{(n)}\).\\
In particular, \(\mu = \U(\Kug{p}{n})\) if and only if \((R^{(n)})^n \sim \U([0, 1])\).
\item Let \(\mu\) be a probability measure on \(\R^n\) which is absolutely continuous w.r.t.\ \(\Leb^{(n)}\), then \(\mu = \GenDist\) for some a.s.\ nonnegative random variable \(R^{(n)}\) if and only if \(\frac{\dd\mu}{\dd\!\Leb^{(n)}} = g \circ \lVert \cdot \rVert_p\) \(\Leb^{(n)}\)\=/a.e.\ for some nonegative measurable function \(g \colon [0, \infty) \to \R\).
\item Let \(X \sim \GenDist\) for some a.s.\ nonnegative random variable \(R^{(n)}\), then \(\lVert X \rVert_p \GlVert R^{(n)}\); and if \(\prb[X = 0] = 0\), then \(\lVert X \rVert_p\) and \(\frac{X}{\lVert X \rVert_p}\) are independent, and \(\frac{X}{\lVert X \rVert_p} \sim \Keg_p^{(n)}\).
\item Let \(Y = (Y_i)_{i = 1}^n\) be a sequence of i.i.d.\ random variables with \(Y_1 \sim \Gaussp{p}\) and let \(W\) be an a.s.\ nonnegative random variable; define \(R^{(n)} := \frac{\lVert Y \rVert_p}{(\lVert Y \rVert_p^p + W)^{1/p}}\), then \(R^{(n)} \geq 0\) a.s., and \(R^{(n)}\) and \(\frac{Y}{\lVert Y \rVert_p}\) are independent. In particular \(R^{(n)} \frac{Y}{\lVert Y \rVert_p} = \frac{Y}{(\lVert Y \rVert_p^p + W)^{1/p}} \sim \GenDist\).
\end{compactenum}
\end{proposition}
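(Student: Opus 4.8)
The plan is to prove the four items of Proposition~\ref{PropGenDistFJP} largely by direct computation, leaning on the Schechtman--Zinn representation~\eqref{EqProbRepSchechtmannZinn} and on standard facts about products of independent random variables. I would begin with item~(iii), since items~(i) and~(ii) will build on it. For item~(iii), recall that \(\X = R^{(n)} \Theta_p^{(n)}\) with \(R^{(n)}\) and \(\Theta_p^{(n)} \sim \Keg_p^{(n)}\) independent; since \(\lVert \Theta_p^{(n)} \rVert_p = 1\) a.s., homogeneity of the norm gives \(\lVert \X \rVert_p = R^{(n)} \lVert \Theta_p^{(n)} \rVert_p = R^{(n)}\) a.s., which already yields \(\lVert \X \rVert_p \GlVert R^{(n)}\). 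For the independence and directional claim, note that on the event \(\{\X \neq 0\}\) (which has full probability by hypothesis) we have \(\frac{\X}{\lVert \X \rVert_p} = \frac{R^{(n)} \Theta_p^{(n)}}{R^{(n)}} = \Theta_p^{(n)}\) wherever \(R^{(n)} > 0\); more carefully, \(\prb[\X = 0] = 0\) forces \(\prb[R^{(n)} = 0] = 0\), so a.s.\ \(\frac{\X}{\lVert \X \rVert_p} = \Theta_p^{(n)}\) and \(\lVert \X \rVert_p = R^{(n)}\), and these two random variables are independent because \(R^{(n)}\) and \(\Theta_p^{(n)}\) are by construction. Thus \(\frac{\X}{\lVert \X \rVert_p} \sim \Keg_p^{(n)}\).

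Next I would do item~(i). For the ``only if'' direction, suppose \(\mu = \GenDist\); take \(\rho\) to be the distribution of \(R^{(n)}\). For nonnegative measurable \(f\), write \(\Erw[f(\X)] = \Erw[f(R^{(n)} \Theta_p^{(n)})]\) and condition on \(R^{(n)}\): by independence and Fubini--Tonelli (justified since \(f \geq 0\)), this equals \(\int_{[0,\infty)} \Erw[f(r \Theta_p^{(n)})] \, \dd\rho(r) = \int_{[0,\infty)} \int_{\Sph{p}{n-1}} f(r\theta) \, \dd\!\Keg_p^{(n)}(\theta) \, \dd\rho(r)\), which is the claimed polar formula. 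Conversely, if such a \(\rho\) exists, let \(R^{(n)} \sim \rho\) be independent of a \(\Keg_p^{(n)}\)-distributed \(\Theta\); the same computation shows the law of \(R^{(n)}\Theta\) integrates \(f\) the same way as \(\mu\), hence equals \(\mu\), i.e.\ \(\mu = \GenDist\). For the ``in particular'' claim, apply the polar formula with \(f\) the indicator of a Borel set; one computes that \(\mu = \U(\Kug{p}{n})\), whose density is the constant \(\Leb^{(n)}(\Kug{p}{n})^{-1}\), forces the radial part \(\rho\) to have the same radial profile as the one coming from \(\Leb^{(n)}\) restricted to \(\Kug{p}{n}\); using the scaling \(\Leb^{(n)}(\{ra : r \in [0,t], a \in A\}) = t^n \Keg_p^{(n)}(A) \Leb^{(n)}(\Kug{p}{n})\) (which is exactly the definition of \(\Keg_p^{(n)}\) combined with \(n\)-homogeneity of Lebesgue measure under scaling) one reads off that \(\prb[R^{(n)} \leq t] = t^n\) for \(t \in [0,1]\), i.e.\ \((R^{(n)})^n \sim \U([0,1])\), and conversely.

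For item~(ii): if \(\mu = \GenDist\) is absolutely continuous w.r.t.\ \(\Leb^{(n)}\), then the polar formula from item~(i) combined with the polar decomposition of \(\Leb^{(n)}\) itself in terms of \(\Keg_p^{(n)}\) (i.e.\ \(\int_{\R^n} h \, \dd\Leb^{(n)} = n \Leb^{(n)}(\Kug{p}{n}) \int_0^\infty \int_{\Sph{p}{n-1}} h(r\theta) r^{n-1} \, \dd\!\Keg_p^{(n)}(\theta) \, \dd r\)) shows that the Radon--Nikodym derivative can only depend on \(r = \lVert \cdot \rVert_p\); explicitly, if \(\dd\rho(r) = \psi(r)\,\dd r\) (after checking \(\rho\) must be a.c.), then \(\frac{\dd\mu}{\dd\Leb^{(n)}}(x) = \frac{\psi(\lVert x \rVert_p)}{n \Leb^{(n)}(\Kug{p}{n}) \lVert x \rVert_p^{n-1}} =: g(\lVert x \rVert_p)\). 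Conversely, given \(\frac{\dd\mu}{\dd\Leb^{(n)}} = g \circ \lVert \cdot \rVert_p\), define \(\rho\) via \(\dd\rho(r) := n \Leb^{(n)}(\Kug{p}{n}) g(r) r^{n-1} \, \dd r\) (a probability measure because \(\mu\) is) and verify the polar formula of item~(i) holds, so \(\mu = \GenDist\) with \(R^{(n)} \sim \rho\). Finally item~(iv) is the most computational: with \(Y\) i.i.d.\ \(p\)-Gaussian, Schechtman--Zinn gives \(\frac{Y}{\lVert Y \rVert_p} \sim \Keg_p^{(n)}\) and, crucially, tells us \(\lVert Y \rVert_p\) and \(\frac{Y}{\lVert Y \rVert_p}\) are \emph{independent}. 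Since \(W\) is independent of \(Y\), it is independent of the pair \((\lVert Y \rVert_p, \frac{Y}{\lVert Y\rVert_p})\); therefore \(R^{(n)} = \frac{\lVert Y \rVert_p}{(\lVert Y \rVert_p^p + W)^{1/p}}\), being a function of \(\lVert Y \rVert_p\) and \(W\) alone, is independent of \(\frac{Y}{\lVert Y \rVert_p}\). The algebraic identity \(R^{(n)} \frac{Y}{\lVert Y \rVert_p} = \frac{Y}{(\lVert Y \rVert_p^p + W)^{1/p}}\) is immediate, and the right-hand side is of the form~\eqref{EqGenDistFJP}, hence \(\sim \GenDist\) with this \(R^{(n)}\).

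The only genuinely delicate point is verifying, in item~(i), that the Schechtman--Zinn representation really does yield the polar integration formula with the cone measure as the directional marginal, and in particular justifying the conditioning/Fubini step and the zero-set caveats; the remaining obstacle, in items~(i) and~(ii), is bookkeeping the Jacobian factor \(r^{n-1}\) and the normalizing constant \(n\Leb^{(n)}(\Kug{p}{n})\) correctly when translating between the radial law \(\rho\) and the Lebesgue density \(g\). None of this is deep, but it must be done carefully so that the two ``in particular'' statements (the \(\U([0,1])\) characterization and the BGMN case) come out with the right exponents.
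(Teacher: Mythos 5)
Your proof is correct and, for items 1 and 2, follows essentially the paper's own argument: the polar integration formula is established by the same Fubini/conditioning argument, the \(\U([0,1])\) characterization falls out of the same Jacobian bookkeeping, and the Radon--Nikodym characterization in item 2 is the same translation between the radial law and the density. Where you genuinely diverge is in items 3 and 4. For item 3 the paper re-derives independence by an explicit integral computation using the polar formula from item 1, whereas you simply identify \(\lVert \X \rVert_p\) with \(R^{(n)}\) and \(\frac{\X}{\lVert \X \rVert_p}\) with \(\Theta_p^{(n)}\) (after passing to a version of \(\X\) that is literally \(R^{(n)} \Theta_p^{(n)}\)) and inherit independence from the defining construction; this is slicker and works fine because all claims are distributional. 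For item 4 the difference is more substantial: the paper carries out the full \(\Leb^{(n)}\)\-/integral, applies the polar decomposition, and factors the resulting double integral by hand, while you short-circuit this by invoking the Schechtman--Zinn \emph{independence} of \(\lVert Y \rVert_p\) and \(\frac{Y}{\lVert Y \rVert_p}\) (not just the distributional representation~\eqref{EqProbRepSchechtmannZinn}), combined with the independence of \(W\) from \(Y\), to conclude that \(R^{(n)} = \frac{\lVert Y \rVert_p}{(\lVert Y \rVert_p^p + W)^{1/p}}\), a function of \((\lVert Y \rVert_p, W)\), is independent of \(\frac{Y}{\lVert Y \rVert_p}\). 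This is cleaner and more conceptual, but note that the paper only states the distributional half of Schechtman--Zinn in its preliminaries, so your argument silently imports a (classical, but unstated) fact that the paper's explicit computation re-proves from scratch; if you want to keep the proof self-contained relative to the paper, you would either have to cite that independence explicitly or fall back on the integral computation.
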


\begin{rem}
\begin{asparaenum}
\item Proposition~\ref{PropGenDistFJP}, 2., tells us that, basically, \(\GenDist\) is constant on \(p\)\=/spheres and thus indeed captures the idea of a measure which is symmetric w.r.t.\ \(\lVert \cdot \rVert_p\).
\item Proposition~\ref{PropGenDistFJP}, 4., shows that \(\GenDist\) is a true generalization of the \(p\)\=/radial measures introduced by Barthe et~al. Note that the latter mainly arise from projecting the cone measure onto the first few coordinates.
\end{asparaenum}
\end{rem}

Finally we remark that if \(\X \sim \GenDist\) and \(\prb[R^{(n)} = 0] > 0\), then also \(\prb[S_{k, p}^{(n)}(\X) = 0] > 0\), and therefore the ratio \(\Ratio{k_1, k_2, p}{(n)}(\X)\) is not defined on a set of positive measure. For that reason we restrict ourselves to the case \(\prb[R^{(n)} = 0] = 0\) in the sequel.

% % % % % % % % % % % % % % % % % % % % % % % % %
\subsection{Basics from the theory of U\=/statistics}
% % % % % % % % % % % % % % % % % % % % % % % % %

Since the Maclaurin ratio $ \Ratio{k_1,k_2,p}{(n)}$ can be expressed via suitable U\=/statistics, we need a few results from the theory of U\=/statistics. For \(k, n \in \N\) with \(k \leq n\) define the set \(J_k^{(n)} := \bigl\{ (i_1, \dotsc, i_k) \in \{1, \dotsc, n\}^k \Colon i_1 < \dotsb < i_k \bigr\}\).

\begin{df}
\label{DefUStat}
Let \(k, n \in \N\) with \(k \leq n\), and let \((X_i)_{i \in \N}\) be a sequence of real\-/valued random variables.
\begin{compactenum}
    \item A kernel of order \(k\) is a symmetric measurable function \(\phi^{(k)} \colon \R^k \to \R\).
    \item The U\=/statistic of order \(k\) with the kernel \(\phi^{(k)}\) of order \(k\) is the random variable
    \begin{equation*}
    U_n( \phi^{(k)} ) := U_n( \phi^{(k)} )(X_1, \dotsc, X_n) := \binom{n}{k}^{-1} \sum_{i \in J_k^{(n)}} \phi^{(k)}(X_{i_1}, \dotsc, X_{i_k}).
    \end{equation*}
\end{compactenum}
\end{df}

The following proposition collects a few well-known results (see, e.g., \cite{leeUStatistics,Friedrich1989}).
\begin{proposition}
\label{PropUStat}
Let $(X_i)_{i \in \N}$ be a sequence of i.i.d.\ real-valued random variables, let \(k \in \N\), let $\phi^{(k)} \colon \R^k \rightarrow \R$ be a kernel of order $k$ and, for $n \geq k$, let $U_n( \phi^{(k)})$ be the associated U\=/statistic.
\begin{enumerate}
    \item 
    \label{UStatHdecomp}
    (Hoeffding's decomposition.) If\; $\Erw\left[ \lvert \phi^{(k)} (X_1, \dotsc, X_k )\rvert \right] < \infty $, then \(\Erw[U_n(\phi^{(k)})] = \Erw[\phi^{(k)}(X_1, \dotsc, X_k)] =: \theta\), and we can write
    \begin{equation}
        \label{EqHdecomp}
        U_n( \phi^{(k)}) = \theta + \sum_{h=1}^k \binom{k}{h} U_n( \phi_h),
    \end{equation}
    where the kernels $ \phi_h \colon \R^h \rightarrow \R$ are defined recursively by
    \begin{align*}
    \phi_1(x_1 ) &:= \Erw\bigl[ \phi^{(k)}(x_1, X_2, \dotsc, X_{k}) \bigr] - \theta, \quad \text{and}\\
    \phi_h( x_1, \dotsc, x_h) &:= \Erw\bigl[ \phi^{(k)}(x_1, \dotsc, x_h, X_{h+1}, \dotsc, X_{k} ) \bigr] - \theta - \sum_{c=1}^{h - 1} \sum_{i \in J_c^{(h)}} \phi_c(x_{i_1}, \dotsc, x_{i_c}) \quad \text{for \(h \in \{2, \dotsc, k\}\).}
    \end{align*}
    They satisfy $\Erw[U_n( \phi_h) ]= 0$ for all $h \in \{1, \dotsc, k\}$, and in the case \(\Erw[\phi^{(k)}(X_1, \dotsc, X_k)^2] < \infty\) the U\=/statistics $U_n(\phi_1), \dotsc, U_n(\phi_h)$ are pairwise uncorrelated, and their variances are given by
    \begin{equation*}
        \Var\left[U_n( \phi_h) \right]= \binom{n}{h}^{-1} \Erw\left[ \phi_h(X_1, \ldots, X_h)^2 \right].
    \end{equation*}

    \item 
    \label{USatCLT}
    (Central limit theorem.) Let $m \in \N$ and, for $k_1, \dotsc, k_m \in \N$, let $ \phi^{(1)}, \dotsc, \phi^{(m)}$ be kernels of orders \(k_1, \dotsc, k_m\) resp.\ such that $ \Erw[ \phi^{(i)} (X_1, \dotsc, X_{k_i})^2 ] < \infty$ for all $i \in \{1, \dotsc, m\}$. Let $\theta_{i} := \Erw[ U_n( \phi^{(i)}) ]$, then the following multivariate CLT holds,
    \begin{equation*}
     \left( \sqrt{n} \left( U_n( \phi^{(i)})- \theta_i \right)_{i = 1}^m  \right)_{n \geq \max\{k_1, \dotsc, k_m\}} \KiVert{} \Normal(0, \Sigma).
    \end{equation*}
    The covariance matrix $\Sigma = (\sigma_{i,j})_{i,j = 1}^m \in \R^{m \times m}$ is given by $\sigma_{i,j} = k_i k_j \Erw[ \phi_1^{(i)}(X_1) \phi_1^{(j)}(X_1) ]$, where $\phi_1^{(l)}$ is defined relative to \(\phi^{(l)}\) as in~\ref{UStatHdecomp}, for $l \in \{i, j\}$.
    
    \item
    \label{UStatBE}
    (Berry\--Esseen.) Assume $ \Erw\bigl[ \lvert \phi^{(k)}(X_1, \dotsc, X_k)\rvert^3 \bigr] < \infty$. Then, there exists a constant $C \in \R_+$ such that, for all \(n \in \N\),
    \begin{equation*}
        \dKol\bigl( \sqrt{n} \bigl( U_n( \phi^{(k)}) - \theta \bigr), Z \bigr) \leq \frac{C}{\sqrt{n}},
    \end{equation*}
    where $\theta = \Erw[ \phi^{(k)} ( X_1, \dotsc, X_k) ] $ and $Z \sim \Normal(0, \sigma^2)$ with variance $\sigma^2 := k^2 \Var[\phi_1( X_1)]$, and \(\phi_1\) is defined as in~\ref{UStatHdecomp}.
\end{enumerate}
\end{proposition}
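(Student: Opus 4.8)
The proposition gathers classical facts, so I would indicate the arguments and refer to \cite{leeUStatistics} (and to \cite{Friedrich1989} for the last part) for full details.

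\emph{Hoeffding's decomposition.} For $0 \le h \le k$ set $\psi_h(x_1,\dots,x_h) := \Erw[\phi^{(k)}(x_1,\dots,x_h,X_{h+1},\dots,X_k)]$, so that $\psi_0 = \theta$ and $\psi_k = \phi^{(k)}$, and the recursion defining $\phi_h$ reads $\phi_h = \psi_h - \theta - \sum_{c=1}^{h-1}\sum_{i\in J_c^{(h)}}\phi_c$. First I would show by induction on $h$ that each $\phi_h$ is \emph{canonical}, i.e.\ $\Erw[\phi_h(x_1,\dots,x_{h-1},X_h)] = 0$ for all $x_1,\dots,x_{h-1}$; the induction step splits the $h$\=/subsets occurring in the recursion into those that do and those that do not contain the last index, and uses the tower property together with the inductive hypothesis. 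Canonicity gives in particular $\Erw[\phi_h(X_1,\dots,X_h)] = 0$, hence $\Erw[U_n(\phi_h)] = 0$. Taking $h = k$ in the recursion produces the pointwise ANOVA identity $\phi^{(k)} = \theta + \sum_{c=1}^{k}\sum_{i\in J_c^{(k)}}\phi_c$; applying $U_n$ to it and simplifying the combinatorial coefficient via $\binom{n}{c}\binom{n-c}{k-c}\big/\binom{n}{k} = \binom{k}{c}$ yields \eqref{EqHdecomp}. Finally, for $g\le h$ and index tuples $i \in J_g^{(n)}$, $i' \in J_h^{(n)}$, the expectation $\Erw[\phi_g(X_{i_1},\dots,X_{i_g})\phi_h(X_{i'_1},\dots,X_{i'_h})]$ vanishes, by independence and canonicity, unless $\{i_1,\dots,i_g\} = \{i'_1,\dots,i'_h\}$ (which forces $g = h$); summing over index tuples gives pairwise uncorrelatedness of the $U_n(\phi_h)$, and for $g = h$ only the ``diagonal'' terms survive, leaving $\Var[U_n(\phi_h)] = \binom{n}{h}^{-1}\Erw[\phi_h(X_1,\dots,X_h)^2]$.

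\emph{Central limit theorem.} By \eqref{EqHdecomp}, $\sqrt{n}\,(U_n(\phi^{(l)}) - \theta_l) = \tfrac{k_l}{\sqrt n}\sum_{j=1}^n \phi_1^{(l)}(X_j) + \sqrt n\,R_n^{(l)}$ with $R_n^{(l)} := \sum_{h=2}^{k_l}\binom{k_l}{h}U_n(\phi_h^{(l)})$. Since $\Var[U_n(\phi_h^{(l)})] = \binom{n}{h}^{-1}\Erw[(\phi_h^{(l)})^2] = O(n^{-h})$ for $h\ge 2$ and $R_n^{(l)}$ is centered, $\Erw[(\sqrt n R_n^{(l)})^2] = O(n^{-1}) \to 0$, so the nonlinear remainders are $L^2$\=/ (hence probability\=/) negligible. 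The leading terms assemble into $\tfrac1{\sqrt n}\sum_{j=1}^n\big(k_l\phi_1^{(l)}(X_j)\big)_{l=1}^m$, an i.i.d.\ sum of centered vectors with finite second moments, to which the multivariate Lindeberg–Lévy CLT applies; the limiting covariance is $\sigma_{ij} = k_ik_j\cov(\phi_1^{(i)}(X_1),\phi_1^{(j)}(X_1)) = k_ik_j\Erw[\phi_1^{(i)}(X_1)\phi_1^{(j)}(X_1)]$, the last equality because each $\phi_1^{(l)}$ is centered. Slutsky's lemma then gives the claimed convergence; the variance in part~3 is the case $m = 1$.

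\emph{Berry–Esseen.} Again write $\sqrt n\,(U_n(\phi^{(k)}) - \theta) = \tfrac{k}{\sqrt n}\sum_{j=1}^n\phi_1(X_j) + \sqrt n\,R_n$. As $\phi_1(x) = \Erw[\phi^{(k)}(x,X_2,\dots,X_k)] - \theta$, Jensen's inequality shows $\Erw[|\phi_1(X_1)|^3] < \infty$ once $\Erw[|\phi^{(k)}(X_1,\dots,X_k)|^3]<\infty$, so the classical Berry–Esseen theorem bounds the Kolmogorov distance between the linear part and $\Normal(0,\sigma^2)$ by $C_1/\sqrt n$. It remains to absorb $\sqrt n\,R_n$. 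Combining Proposition~\ref{PropEstimateKolmogoroff} with Chebyshev's inequality and $\Erw[(\sqrt n R_n)^2] = O(n^{-1})$ and optimizing the truncation threshold only yields a rate $O(n^{-1/3})$; \textbf{the main obstacle} is to recover the sharp order $O(n^{-1/2})$, which is not attainable from any such crude tail bound on $R_n$ in isolation. This requires a finer analysis exploiting the degeneracy of the $U_n(\phi_h)$, $h\ge 2$, within a more delicate smoothing argument, at which point I would invoke the bound of Friedrich~\cite{Friedrich1989} (see also \cite{leeUStatistics}).
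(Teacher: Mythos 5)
The paper itself gives no proof of this proposition; it merely labels it a collection of ``well\-/known results'' and cites \cite{leeUStatistics,Friedrich1989}. Your proposal takes essentially the same route (delegating the hard part to the literature) but adds a correct sketch of the mechanism, which is a useful supplement rather than a divergence.

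Your argument for Hoeffding's decomposition (canonicity of $\phi_h$ by induction, the pointwise ANOVA identity at $h=k$, the combinatorial identity $\binom{n}{c}\binom{n-c}{k-c}/\binom{n}{k}=\binom{k}{c}$, and orthogonality via independence plus canonicity) is correct and standard, and your treatment of the CLT via the H\'ajek projection and $L^2$\-/negligibility of the higher\-/order terms is likewise fine. Most importantly, your diagnosis of the Berry--Esseen part is accurate and worth emphasizing: Proposition~\ref{PropEstimateKolmogoroff} together with Chebyshev on $\sqrt{n}\,R_n$ (with $\Erw[(\sqrt{n}R_n)^2]=O(n^{-1})$) only delivers a rate $O(n^{-1/3})$ after optimizing the truncation, so the sharp $O(n^{-1/2})$ genuinely requires the refined smoothing/conditioning analysis of Callaert--Janssen type as carried out by Friedrich and presented in Lee; your proposal correctly flags this and defers to those references rather than pretending the crude bound suffices.
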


The following propositions establish an MDP and an LDP, respectively, for U\=/statistics with product kernels under more general assumptions than in the classical theory. The proofs are given in Section~\ref{SectionProofsUStat}.

\begin{proposition}
\label{LemMDPUStat}
Let $(X_i)_{i \in \N}$ be an sequence of i.i.d.\ positive random variables such that \(\Erw[X_1] > 0\) and there exists $t_0 \in \R_+$ with $\Erw\bigl[ \exp( t_0 X_1) \bigr] < \infty$. Let $k \in \N$ and define the kernel $\phi^{(k)} \colon \R_+^k \rightarrow \R $ by $(y_1, \dotsc, y_k) \mapsto \prod_{i=1}^k y_i $. Let $(b_n)_{n \in \N} $ be a sequence of positive real numbers such that $ \lim_{n \rightarrow \infty} b_n = \infty $ and $ \lim_{n \rightarrow \infty } \frac{b_n}{\sqrt{n}} = 0$. Then the sequences $\Bigl( \frac{\sqrt{n}}{b_n} \bigl( U_n( \phi^{(k)}) - m^k \bigr) \Bigr)_{n \geq k}$ and $\Bigl( \frac{\sqrt{n}}{b_n} k m^{k-1} \frac{1}{n} \sum_{i =1}^n (X_i - m) \Bigr)_{n \geq k}$ are exponentially equivalent at speed $b_n^2$.

In particular, $\bigl( \frac{\sqrt{n}}{b_n} \bigl( U_{n}(\phi^{(k)}) - m^k \bigr) \bigr)_{n \geq k}$ satisfies an MDP in $\R$ at speed $b_n^2$ with GRF\ $\I \colon \R \rightarrow [0, \infty)$, $x \mapsto \frac{x^2}{2 \sigma^2}$, where $m = \Erw[X_1] $ and $ \sigma^2 = m^{2(k-1)} k^2 \Var[X_1]$.
\end{proposition}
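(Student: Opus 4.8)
The plan is to reduce the U-statistic $U_n(\phi^{(k)})$ with product kernel to a linear statistic in the $X_i$'s, for which the Cramér-type MDP of Proposition~\ref{ThmCramerMDP} applies directly, and then to invoke exponential equivalence (Proposition~\ref{PropExpEquLDP}) to transfer the MDP. First I would fix notation: set $m := \Erw[X_1] > 0$ and write $\bar{X}_n := \frac{1}{n} \sum_{i=1}^n X_i$. The key algebraic observation is that $U_n(\phi^{(k)}) = \binom{n}{k}^{-1} \sum_{i \in J_k^{(n)}} X_{i_1} \dotsm X_{i_k}$ is, up to the normalizing constant, the $k$\textsuperscript{th} elementary symmetric polynomial in $X_1, \dotsc, X_n$, whereas $\bar{X}_n^k = n^{-k} \bigl( \sum_{i=1}^n X_i \bigr)^k = n^{-k} \sum_{(i_1, \dotsc, i_k) \in \{1, \dotsc, n\}^k} X_{i_1} \dotsm X_{i_k}$ ranges over \emph{all} $k$-tuples, including those with repeated indices. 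The difference between these two quantities consists only of the ``diagonal'' terms, i.e.\ monomials in which at least two indices coincide; there are $O(n^{k-1})$ such tuples out of $n^k$, so heuristically the difference is of order $1/n$, which is far smaller than the MDP scale $b_n/\sqrt{n}$.

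The main work is to make this heuristic precise at the exponential scale $b_n^2$. Concretely, I would show that for every $\epsilon > 0$,
\begin{equation*}
\lim_{n \to \infty} \frac{1}{b_n^2} \log \prb\Bigl[ \tfrac{\sqrt{n}}{b_n} \bigl\lvert U_n(\phi^{(k)}) - \bar{X}_n^k \bigr\rvert > \epsilon \Bigr] = -\infty.
\end{equation*}
To this end, expand $\bar{X}_n^k$ by the multinomial theorem and separate the ``distinct-index'' part, which equals $\frac{n!}{(n-k)! \, n^k} U_n(\phi^{(k)}) = \bigl( 1 + O(1/n) \bigr) U_n(\phi^{(k)})$, from the remaining diagonal part, which is a finite sum (with $n$-independent combinatorial coefficients) of terms of the form $n^{-k}$ times a sum over fewer than $k$ free indices of products of powers of the $X_i$. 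Each such term is a bounded constant times $n^{-j}$ for some $j \geq 1$ times an average of i.i.d.\ positive random variables $X_{i_1}^{a_1} \dotsm X_{i_\ell}^{a_\ell}$ with $\ell < k$. Since $X_1$ has some finite exponential moment, so do bounded products of its powers over distinct indices (by independence and Hölder), hence each such average satisfies an ordinary LDP at speed $n$ by Cramér's theorem (Proposition~\ref{ThmCramer}), and in particular is exponentially tight. Multiplying by the deterministic factor $\sqrt{n}/b_n \cdot n^{-j}$, which tends to $0$, and using $b_n^2 \to \infty$ together with $b_n/\sqrt{n} \to 0$, one sees that the contribution of every diagonal term to the rescaled difference goes to $0$ superexponentially fast at speed $b_n^2$; a union bound over the finitely many diagonal terms finishes the estimate. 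The same argument handles the $O(1/n)$ discrepancy between $U_n(\phi^{(k)})$ and its distinct-index realization inside $\bar{X}_n^k$, since $\sqrt{n}/b_n \cdot (1/n) \cdot U_n(\phi^{(k)})$ is again a vanishing deterministic factor times a quantity governed by an LDP at speed $n$ (the law of large numbers gives $U_n(\phi^{(k)}) \to m^k$, and exponential tightness follows as above).

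It remains to handle the linear statistic itself. We have $\bar{X}_n^k - m^k = \bigl( \bar{X}_n - m \bigr) \sum_{j=0}^{k-1} \bar{X}_n^{j} m^{k-1-j}$; since $\bar{X}_n \to m$ with exponential speed $n$ (again Cramér), the factor $\sum_{j=0}^{k-1} \bar{X}_n^j m^{k-1-j}$ is exponentially close to its limit $k m^{k-1}$, so by another exponential-equivalence argument $\frac{\sqrt{n}}{b_n}(\bar{X}_n^k - m^k)$ is exponentially equivalent at speed $b_n^2$ to $\frac{\sqrt{n}}{b_n} k m^{k-1}(\bar{X}_n - m) = \frac{1}{b_n \sqrt{n}} \sum_{i=1}^n k m^{k-1}(X_i - m)$. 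Chaining the two equivalences (difference $U_n(\phi^{(k)}) \leftrightarrow \bar X_n^k$ and $\bar X_n^k \leftrightarrow$ linear term) via the triangle inequality yields the claimed exponential equivalence of $\bigl( \frac{\sqrt{n}}{b_n}(U_n(\phi^{(k)}) - m^k) \bigr)_{n \geq k}$ and $\bigl( \frac{\sqrt{n}}{b_n} k m^{k-1} \frac{1}{n} \sum_{i=1}^n (X_i - m) \bigr)_{n \geq k}$. Finally, the i.i.d.\ random variables $k m^{k-1}(X_i - m)$ are centered, have finite exponential moments, and have variance $\sigma^2 = k^2 m^{2(k-1)} \Var[X_1] \in (0, \infty)$ (nondegenerate since $X_1$ is genuinely random, being positive with $m > 0$ but not a.s.\ constant—if it were constant the claim is trivial), so Proposition~\ref{ThmCramerMDP} gives that $\frac{1}{b_n\sqrt{n}}\sum_{i=1}^n k m^{k-1}(X_i - m)$ satisfies an LDP at speed $b_n^2$ with good rate function $x \mapsto \frac{x^2}{2\sigma^2}$; Proposition~\ref{PropExpEquLDP} then transfers this LDP to $\frac{\sqrt n}{b_n}(U_n(\phi^{(k)}) - m^k)$, which is precisely the asserted MDP.

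The main obstacle is the second paragraph: carefully organizing the multinomial expansion of $\bar X_n^k$ into diagonal blocks and verifying, block by block, that the rescaled diagonal contributions vanish superexponentially at speed $b_n^2$—this requires combining the exponential-moment hypothesis (to get LDPs/exponential tightness for the relevant sub-averages) with the precise rates $b_n \to \infty$ and $b_n/\sqrt n \to 0$ so that the deterministic prefactors $\sqrt n\, n^{-j}/b_n$ are seen to decay fast enough. The linear-statistic step and the final application of the Cramér-type MDP are standard once the reduction is in place.
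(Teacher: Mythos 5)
Your overall plan --- replace $U_n(\phi^{(k)})$ by $\bar X_n^k$ through a diagonal-term estimate, then replace $\bar X_n^k - m^k$ by the linear statistic $k m^{k-1}(\bar X_n - m)$, then apply the Cram\'er MDP and transfer via exponential equivalence --- is precisely the paper's plan. But there is a genuine gap in the second paragraph, in the single sentence doing most of the work: you claim that since $X_1$ has a finite exponential moment, so do ``bounded products of its powers over distinct indices (by independence and H\"older),'' and you then invoke Cram\'er's theorem (Proposition~\ref{ThmCramer}) and exponential tightness at speed $n$ for averages of such products. This is false already for the simplest diagonal block. The multinomial expansion of $\bar X_n^k$ inevitably produces terms containing a square, e.g.\ averages of $X_i^2$, and $\Erw[e^{t_0 X_1}] < \infty$ does \emph{not} imply $\Erw[e^{t X_1^2}] < \infty$ for any $t>0$: if $X_1$ is exponentially distributed, $\Erw[e^{t X_1^2}] = \infty$ for every $t>0$. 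H\"older and independence do not help because the square sits on a \emph{single} variable. Consequently the average $\frac1n \sum_i X_i^2$ has only stretched-exponential tails of order $\exp(-c\sqrt n)$, not $\exp(-cn)$, so the classical Cram\'er LDP (and therefore exponential tightness at speed $n$) is unavailable, and the decay of the deterministic prefactor $\sqrt n / (b_n n^j)$ alone is not enough to conclude superexponential decay at speed $b_n^2$ without a quantitative estimate on those tails.

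This is exactly the technical obstacle the paper isolates and resolves differently. Instead of expanding the diagonal block by block, it bounds the full diagonal contribution in one stroke via the combinatorial Lemma~\ref{LemSummen}, which gives
\begin{equation*}
0 \le n^{-k}\sum_{i\in B_k^{(n)}} X_{i_1}\cdots X_{i_k} \le \frac1n \binom{k}{2}\,\frac1n\sum_{i=1}^n X_i^2 \, \bar X_n^{\,k-2},
\end{equation*}
and then controls the event $\bigl\{\frac{1}{\sqrt n\, b_n}\frac1n\sum X_i^2\, \bar X_n^{k-2} > \epsilon\bigr\}$ by splitting on whether $\bar X_n^{k-2}$ is large (standard Cram\'er upper bound, since $X_1$ itself does have an exponential moment) and then applying the Nagaev-type result for stretched-exponential tails, Proposition~\ref{LemStrechedExp}, to $\frac1n\sum X_i^2$ --- with a further case distinction depending on whether $\sqrt n/b_n^2$ stays bounded away from $0$ or tends to $0$. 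If you want to make your argument rigorous you must replace the appeal to Cram\'er/exponential tightness for the squared (and higher-power) sub-averages by such a stretched-exponential tail estimate; without that, the reduction to $\bar X_n^k$ is not justified. The third paragraph (factorizing $\bar X_n^k - m^k$ and invoking Proposition~\ref{ThmCramerMDP} and Proposition~\ref{PropExpEquLDP}) is fine in spirit and corresponds to the paper's Taylor-expansion step with Lemma~\ref{PropExpEquivalence}.
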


\begin{rem}\label{rem:mdp_ldp_ustat}
\begin{asparaenum}
\item%
Although stated quite generally, we shall need Proposition~\ref{LemMDPUStat} mainly for the case \(X_i = \lvert Y_i \rvert^p\) with \(Y_i \sim \Gaussp{p}\) i.i.d. To the authors' best knowledge, the current general results concerning MDPs and LDPs for U\=/statistics (e.g.~Eichelsbacher and Schmock~\cite{EichSchmo2003} for MDP, Eichelsbacher and L\"owe~\cite{EiLoe1995} for LDP) require the much stricter Cram\'er's condition \(\Erw[\exp(t_0 \lvert \phi(X_1, \dotsc, X_k) \rvert)] < \infty\) for a kernel \(\phi\) of order \(k\), with some \(t_0 > 0\). But as one may verify, for \(Y_i\) as given above there always holds true \(\Erw[\exp(t_0 \lvert Y_1 \rvert^p \dotsm \lvert Y_k \rvert^p)] = \infty\) for any \(t_0 > 0\) and all \(k \geq 2\), yet \(\Erw[\exp(t_0 \lvert Y_1 \rvert^p)] = (1 - p t_0)^{-1/p} < \infty\) for all \(t_0 < \frac{1}{p}\). In that respect the aforementioned propositions establish MDPs and LDPs for certain U\=/statistics based on distributions with somewhat heavier tails than usual.

\item%
To be accurate, there exists kind of an MDP for U\=/statistics with product kernels by Borovskikh and Weber~\cite[Theorem~1]{BorWeb2001}, one might call it ``sharp moderate deviations,'' which supposes the weaker condition \(\Erw[\exp(t_0 \lvert X_1 \rvert)] < \infty\) for some \(t_0 > 0\). (Somewhat misleadingly the title refers to ``large deviations.'') Unfortunately it does not serve our purposes since it only leads to
\begin{equation*}
\lim_{n \to \infty} \frac{1}{b_n^2} \log \prb\left[ \frac{\sqrt{n}}{b_n} \bigl( U_n(\phi^{(k)}) - m^k \bigr) > a \right] = -\frac{a^2}{2 \sigma^2},
\end{equation*}
where notation is as in Proposition~\ref{LemMDPUStat}, and \(a \in \R_+\) is arbitrary. No statement is made about \(a < 0\).

\item%
In the case \(k = 2\) D\"oring and Eichelsbacher~\cite[Theorem~3.1]{DoerEich2013} premised the moment condition \(\Erw[\lvert \phi(X_1, X_2) \rvert^l] \leq C^l l!^\gamma\), for all \(l \geq 3\) and with constants \(\gamma \geq 1\), \(C > 0\) independent of \(l\), in order to prove an MDP for the associated U\=/statistic (albeit with suboptimal speed). It would be very interesting to know whether there exists a generalization of this result to arbitrary \(k\), since it meets precisely our needs (and indeed the condition is satisfied in our situation), and said moment condition seems to be well\-/suited for the investigation of product kernels.
\end{asparaenum}
\end{rem}

% % % % % % % % % % % % % % % % % % % % % % % % %
\section{Proofs}
\label{SectionProofsPrelim}
% % % % % % % % % % % % % % % % % % % % % % % % %

% % % % % % % % % % % % % % % % % % % % % % % % %
\subsection{Proof of Proposition~\ref{PropGenDistFJP}}
\label{ProofGenDistFJP}
% % % % % % % % % % % % % % % % % % % % % % % % %

\begin{asparaenum}
\item
\(\Rightarrow\): Suppose \(\mu = \GenDist\) for some a.s.\ nonnegative random variable \(R^{(n)}\), then \(\mu\) is the distribution of \(R^{(n)} \Theta\), where \(\Theta \sim \Keg_p^{(n)}\) is independent of \(R^{(n)}\). Write \(\rho\) for the distribution of \(R^{(n)}\), then because of the independence, we have for any nonnegative measurable map \(f \colon \R^n \to \R\),
\begin{equation*}
\int_{\R^n} f \, \dd\mu = \Erw\bigl[ f(R^{(n)} \Theta) \bigr] = \int_{[0, \infty)} \int_{\Sph{p}{n - 1}} f(r \theta) \, \dd\!\Keg_p^{(n)}(\theta) \, \dd\rho(r).
\end{equation*}

\(\Leftarrow\): The polar integration formula implies that \(\mu\) is the distribution of \(R^{(n)} \Theta\), where \(R^{(n)}\) and \(\Theta\) are random variables whose joint distribution is the product measure \(\rho \otimes \Keg_p^{(n)}\). This means that \(R^{(n)} \sim \rho\), \(\Theta \sim \Keg_p^{(n)}\) and the two are independent, and therefore \(\mu = \GenDist\).

\textit{Addendum:} For any nonnegative measurable map \(f \colon \R^n \to \R\) we have the classical polar integration formula (e.g.\ see~\cite{Bartheetal})
\begin{equation*}
\frac{1}{\Leb^{(n)}(\Kug{p}{n})} \int_{\Kug{p}{n}} f \, \dd\!\Leb^{(n)} = n \int_0^1 \int_{\Sph{p}{n - 1}} f(r \theta) r^{n - 1} \, \dd\!\Keg_p^{(n)}(\theta) \, \dd r,
\end{equation*}
so define \(\rho\) by its density \(\frac{\dd\rho}{\dd\!\Leb^{(1)}}(r) = n r^{n - 1}\) if \(r \in [0, 1]\), and zero else, then \(R^{(n)} \sim \rho\) is equivalent to \((R^{(n)})^n \sim \U([0, 1])\).

\item
\(\Rightarrow\): Write \(G := \frac{\dd\mu}{\dd\!\Leb^{(n)}}\), then with a suitable probability measure \(\rho\) on \([0, \infty)\) and using 1.\ we have, for any nonnegative measurable map \(f \colon \R^n \to \R\),
\begin{align*}
\int_{[0, \infty)} \int_{\Sph{p}{n - 1}} f(r \theta) \, \dd\!\Keg_p^{(n)} \, \dd\rho(r) &= \int_{\R^n} f \, \dd\mu = \int_{\R^n} f \, G \, \dd\!\Leb^{(n)}\\
&= n \Leb^{(n)}(\Kug{p}{n}) \int_{[0, \infty)} \int_{\Sph{p}{n - 1}} f(r \theta) G(r \theta) r^{n - 1} \, \dd\!\Keg_p^{(n)}(\theta) \, \dd r.
\end{align*}
Choose \(f := F \circ \lVert \cdot \rVert_p\) with an arbitrary nonnegative measurable map \(F \colon [0, \infty) \to \R\), then \(f(r \theta) = F(r)\) for any \(r \in [0, \infty)\) and \(\theta \in \Sph{p}{n - 1}\), and therewith
\begin{equation*}
\int_{[0, \infty)} F(r) \, \dd\rho(r) = n \Leb^{(n)}(\Kug{p}{n}) \int_{[0, \infty)} F(r) r^{n - 1} \int_{\Sph{p}{n - 1}} G(r \theta) \, \dd\!\Keg_p^{(n)}(\theta) \, \dd r.
\end{equation*}
Define \(g(r) := \int_{\Sph{p}{n - 1}} G(r \theta) \, \dd\!\Keg_p^{(n)}(\theta)\), then the previous display tells us \(\frac{\dd\rho}{\dd\!\Leb^{(1)}}(r) = n \Leb^{(n)}(\Kug{p}{n}) g(r) r^{n - 1}\), and going back to arbitrary \(f\) we have
\begin{align*}
\int_{\R^n} f \, G \, \dd\!\Leb^{(n)} &= \int_{[0, \infty)} \int_{\Sph{p}{n - 1}} f(r \theta) \, \dd\!\Keg_p^{(n)} \, n \Leb^{(n)}(\Kug{p}{n}) g(r) r^{n - 1} \, \dd r\\
&= n \Leb^{(n)}(\Kug{p}{n}) \int_{[0, \infty)} f(r \theta) g(\lVert r \theta \rVert_p) r^{n - 1} \, \dd\!\Keg_p^{(n)}(\theta) \, \dd r\\
&= \int_{\R^n} f(x) g(\lVert x \rVert_p) \, \dd x,
\end{align*}
and from this we obtain \(G = g \circ \lVert \cdot \rVert_p\) \(\Leb^{(n)}\)\=/a.e., as claimed.

\(\Leftarrow\): Let \(f \colon \R^n \to \R\) be nonnegative and measurable. Then,
\begin{align*}
\int_{\R^n} f \, \dd\mu &= \int_{\R^n} f(x) g(\lVert x \rVert_p) \, \dd x\\
&= n \Leb^{(n)}(\Kug{p}{n}) \int_{[0, \infty)} \int_{\Sph{p}{n - 1}} f(r \theta) g(\lVert r \theta \rVert_p) r^{n - 1} \, \dd\!\Keg_p^{(n)}(\theta) \, \dd r\\
&= \int_{[0, \infty)} \int_{\Sph{p}{n - 1}} f(r \theta) \, \dd\!\Keg_p^{(n)}(\theta) \, n \Leb^{(n)}(\Kug{p}{n}) g(r) r^{n - 1} \, \dd r,
\end{align*}
hence the claim follows with \(\frac{\dd\rho}{\dd\!\Leb^{(1)}}(r) := n \Leb^{(n)}(\Kug{p}{n}) g(r) r^{n - 1}\).

\item
By definition \(X \GlVert R^{(n)} \Theta\) with independent random variables \(R^{(n)}\) and \(\Theta \sim \Keg_p^{(n)}\); in particular \(\Theta \in \Sph{p}{n - 1}\) a.s., and hence \(\lVert X \rVert_p \GlVert R^{(n)}\).

Now suppose \(\prb[X = 0] = 0\), then \(\frac{X}{\lVert X \rVert_p}\) is a.s.\ defined. Let \(f \colon [0, \infty) \to \R\) and \(g \colon \Sph{p}{n - 1} \to \R\) be nonnegative and measurable, then, using 1. and denoting the distribution of \(R^{(n)}\) by \(\rho\),
\begin{align*}
\Erw\Biggl[ f(\lVert X \rVert_p) g\Biggl( \frac{X}{\lVert X \rVert_p} \Biggr) \Biggr] &= \int_{[0, \infty)} \int_{\Sph{p}{n - 1}} f(\lVert r \theta \rVert_p) g\Biggl( \frac{r \theta}{\lVert r \theta \rVert_p} \Biggr) \, \dd\!\Keg_p^{(n)}(\theta) \, \dd\rho(r)\\
&= \int_{[0, \infty)} \int_{\Sph{p}{n - 1}} f(r) g(\theta) \, \dd\!\Keg_p^{(n)}(\theta) \, \dd\rho(r)\\
&= \int_{[0, \infty)} f \, \dd\rho \int_{\Sph{p}{n - 1}} g \, \dd\!\Keg_p^{(n)}.
\end{align*}
From that follows the claimed joint distribution of \(\bigl( \lVert X \rVert_p, \frac{X}{\lVert X \rVert_p} \bigr)\).

\item
\(R^{(n)} \geq 0\) a.s.\ is obvious. Write \(\upsilon\) for the distribution of \(W\), and let \(f \colon [0, \infty) \to \R\) and \(g \colon \Sph{p}{n - 1} \to \R\) be nonnegative and measurable, then
\begin{align*}
\Erw\Biggl[ f(R^{(n)}) g\Biggl( \frac{Y}{\lVert Y \rVert_p} \Biggr) \Biggr] &= \int_{[0, \infty)} \int_{\R^n} f\Biggl( \frac{\lVert y \rVert_p}{(\lVert y \rVert_p^p + w)^{1/p}} \Biggr) g\Biggl( \frac{y}{\lVert y \rVert_p} \Biggr) \frac{\ee^{-\lVert y \rVert_p^p/p}}{(2 p^{1/p} \Gamma(\frac{1}{p} + 1))^n} \, \dd y \, \dd\upsilon(w)\\
&= \int_{[0, \infty)} \int_{[0, \infty)} \int_{\Sph{p}{n - 1}} f\Biggl( \frac{\lVert r \theta \rVert_p}{(\lVert r \theta \rVert_p^p + w)^{1/p}} \Biggr) g\Biggl( \frac{r \theta}{\lVert r \theta \rVert_p} \Biggr)\\
&\mspace{150mu} \cdot \frac{n \Leb^{(n)}(\Kug{p}{n}) r^{n - 1} \ee^{-\lVert r \theta \rVert_p^p/p}}{(2 p^{1/p} \Gamma(\frac{1}{p} + 1))^n} \, \dd\!\Keg_p^{(n)}(\theta) \, \dd r \, \dd\upsilon(w)\\
&= \int_{[0, \infty)} \int_{[0, \infty)} f\Biggl( \frac{r}{(r^p + w)^{1/p}} \Biggr) \frac{n \Leb^{(n)}(\Kug{p}{n}) r^{n - 1} \ee^{-r^p/p}}{(2 p^{1/p} \Gamma(\frac{1}{p} + 1))^n} \, \dd r \, \dd\upsilon(w) \int_{\Sph{p}{n - 1}} g \, \dd\!\Keg_p^{(n)},
\end{align*}
and this proves the claimed independence. \qed
\end{asparaenum}

The last integral may be exploited to determine the distribution of \(R^{(n)}\): separate the possible atom \(w = 0\), perform the substitution \(s := \frac{r}{(r^p + w)^{1/p}}\) for any fixed \(w \in \R_+\), then after simplifying terms, including \(\Leb^{(n)}(\Kug{p}{n}) = \frac{(2 \Gamma(1 + \frac{1}{p}))^n}{\Gamma(1 + \frac{n}{p})}\), we reach
\begin{equation*}
\Erw[f(R^{(n)})] = f(1) \upsilon\{0\} + \int_0^1 f(s) \frac{1}{p^{n/p - 1} \Gamma(\frac{n}{p})} \int_{\R_+} w^{n/p} \exp\Biggl( -\frac{s^p w}{p (1 - s^p)} \Biggr) \, \dd\upsilon(w) \, \frac{s^{n - 1}}{(1 - s^p)^{n/p + 1}} \, \dd s,
\end{equation*}
so \(\rho\) is supported in \([0, 1]\) and absolutely continuous w.r.t.\ \(\Leb^{(1)}\) plus a point mass of weight \(\upsilon\{0\}\) at 1; of course this is compatible essentially with the expressions given in~\cite{Bartheetal} and the literature building thereupon.

% % % % % % % % % % % % % % % % % % % % % % % % %
\subsection{Proof of Proposition~\ref{LemMDPUStat}}
\label{SectionProofsUStat}
% % % % % % % % % % % % % % % % % % % % % % % % %

The following lemma is used in the proofs of Proposition~\ref{LemMDPUStat} and Theorem~\ref{ThmMDPRkn}.

\begin{lem}
\label{PropExpEquivalence}
Let $ d \in \N$, let $R \colon \R^d \rightarrow \R$ be a continuous function, and let $M, \delta \in \R_+$ such that $ \lvert R(x)\rvert \leq M \lVert x\rVert_2^2$ for all $x \in \R^d$ with $ \lVert x\rVert_2^2 \leq \delta$; let $(b_n)_{n \in \N}$ be a sequence of positive real number such that $ \lim_{n \to \infty} b_n = \infty$ and $ \lim_{n \to \infty} \frac{b_n}{\sqrt{n}} = 0$, and for each $i \in \{1, \dotsc, d\}$ let $ (Z_n^{(i)})_{n \in \N}$ be a sequence of random variables such that $ \left( \frac{\sqrt{n}}{b_n} Z_n^{(i)} \right)_{n \in \N}$ satisfies an MDP at speed $b_n^2$ with GRF $x \mapsto \frac{x^2}{2 \sigma_i^2}$, where $ \sigma_i^2 \in \R_+$. Then $\left( \frac{\sqrt{n}}{b_n} \, R(Z_n^{(1)}, \dotsc, Z_n^{(d)}) \right)_{n \in \N}$ is exponentially equivalent to $0$ at speed $b_n^2$. 
\end{lem}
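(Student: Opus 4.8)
The plan is to show that the event $\bigl\{ \tfrac{\sqrt n}{b_n} \lvert R(Z_n^{(1)}, \dotsc, Z_n^{(d)}) \rvert > \epsilon \bigr\}$ has superexponentially small probability at speed $b_n^2$, for every fixed $\epsilon > 0$. The natural strategy is to split this event according to whether the random vector $V_n := (Z_n^{(1)}, \dotsc, Z_n^{(d)})$ has small Euclidean norm or not. On the ``small'' region $\lVert V_n \rVert_2^2 \le \delta$ we may use the quadratic bound $\lvert R(V_n) \rvert \le M \lVert V_n \rVert_2^2$; on the ``large'' region we discard $R$ entirely and just bound the probability that $\lVert V_n \rVert_2^2 > \delta$.

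\medskip

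\noindent\textbf{Step 1 (decomposition).} Fix $\epsilon > 0$. I would write
\begin{align*}
\prb\Bigl[ \tfrac{\sqrt n}{b_n} \lvert R(V_n) \rvert > \epsilon \Bigr]
&\le \prb\Bigl[ \tfrac{\sqrt n}{b_n} \lvert R(V_n) \rvert > \epsilon,\ \lVert V_n \rVert_2^2 \le \delta \Bigr]
+ \prb\bigl[ \lVert V_n \rVert_2^2 > \delta \bigr]\\
&\le \prb\Bigl[ \tfrac{\sqrt n}{b_n} M \lVert V_n \rVert_2^2 > \epsilon \Bigr]
+ \prb\bigl[ \lVert V_n \rVert_2^2 > \delta \bigr].
\end{align*}
Both probabilities concern $\lVert V_n \rVert_2^2 = \sum_{i=1}^d (Z_n^{(i)})^2$, and via the union bound over coordinates it suffices to control $\prb\bigl[ (Z_n^{(i)})^2 > a_n \bigr] = \prb\bigl[ \lvert Z_n^{(i)} \rvert > \sqrt{a_n} \bigr]$ for each $i$, where in the first term $a_n = \tfrac{b_n \epsilon}{d M \sqrt n}$ (a sequence tending to $0$, but the point is that $\sqrt n \, a_n / b_n = \epsilon/(dM)$ is a constant, so $\tfrac{\sqrt n}{b_n}\lvert Z_n^{(i)}\rvert > \sqrt{\tfrac{\sqrt n}{b_n}}\cdot\sqrt{\epsilon/(dM)}\cdot\sqrt{b_n/\sqrt n}$\,\dots more cleanly: $\lvert Z_n^{(i)}\rvert > \sqrt{a_n}$ rewrites as $\tfrac{\sqrt n}{b_n}\lvert Z_n^{(i)}\rvert > \tfrac{\sqrt n}{b_n}\sqrt{a_n}$, and $\tfrac{\sqrt n}{b_n}\sqrt{a_n} = \sqrt{\tfrac{n}{b_n^2} a_n} = \sqrt{\tfrac{\sqrt n \,\epsilon}{d M b_n}} \to \infty$), and in the second term $a_n = \delta/d$ is a fixed constant, so $\tfrac{\sqrt n}{b_n}\sqrt{a_n} \to \infty$ as well.

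\medskip

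\noindent\textbf{Step 2 (MDP upper bound).} For each fixed $i$, the sequence $\bigl( \tfrac{\sqrt n}{b_n} Z_n^{(i)} \bigr)_{n}$ satisfies an LDP (an MDP) at speed $b_n^2$ with GRF $x \mapsto x^2/(2\sigma_i^2)$. Applying the LDP upper bound to the closed set $A_L := \{ x : \lvert x \rvert \ge L \}$ gives
\begin{equation*}
\limsup_{n \to \infty} \frac{1}{b_n^2} \log \prb\Bigl[ \tfrac{\sqrt n}{b_n} \lvert Z_n^{(i)} \rvert \ge L \Bigr] \le - \frac{L^2}{2 \sigma_i^2}.
\end{equation*}
Since in both terms of Step 1 the relevant threshold $L = L_n := \tfrac{\sqrt n}{b_n}\sqrt{a_n}$ tends to $+\infty$, I would for any fixed $L_0$ eventually have $L_n \ge L_0$, hence the probabilities are eventually bounded by $\exp\bigl(-b_n^2 (L_0^2/(2\sigma_i^2) - o(1))\bigr)$; letting $L_0 \to \infty$ after taking $n \to \infty$ shows
\begin{equation*}
\lim_{n \to \infty} \frac{1}{b_n^2} \log \prb\Bigl[ \tfrac{\sqrt n}{b_n} \lvert Z_n^{(i)} \rvert \ge L_n \Bigr] = -\infty.
\end{equation*}
Combining this over $i = 1, \dotsc, d$ with the union bound, and then combining the two terms of the decomposition (the logarithm of a sum of finitely many terms is, up to $O(1)$, the max of the logarithms), yields $\lim_{n\to\infty} \tfrac{1}{b_n^2}\log \prb\bigl[ \tfrac{\sqrt n}{b_n}\lvert R(V_n)\rvert > \epsilon \bigr] = -\infty$, which is exactly exponential equivalence of $\bigl(\tfrac{\sqrt n}{b_n} R(V_n)\bigr)_n$ to $0$ at speed $b_n^2$ as defined in~\eqref{EqExpEquivalence}.

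\medskip

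\noindent\textbf{Main obstacle.} The one genuinely delicate point is handling the thresholds $L_n$ that grow to infinity rather than being fixed: the LDP only directly gives $\limsup \le -\inf_{|x|\ge L} x^2/(2\sigma_i^2) = -L^2/(2\sigma_i^2)$ for each \emph{fixed} $L$, so one must pass from fixed $L$ to the growing $L_n$ by a two-stage limit (fix $L_0$, use monotonicity of the events in $L$ so that $\{|Z| \ge L_n\} \subseteq \{|Z| \ge L_0\}$ once $L_n \ge L_0$, take $\limsup_n$, then let $L_0 \to \infty$). This is standard but should be written carefully. A secondary, minor point is making sure the continuity of $R$ together with the quadratic bound near the origin is correctly used only on the event $\lVert V_n\rVert_2^2 \le \delta$; everything off that event is absorbed into the second probability, where $R$ plays no role at all.
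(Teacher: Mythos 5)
Your proposal is correct and follows essentially the same route as the paper: decompose on $\{\lVert V_n\rVert_2^2\le\delta\}$ vs.\ its complement, apply the quadratic bound on the small region, use a coordinatewise union bound, rewrite in terms of $\tfrac{\sqrt n}{b_n}Z_n^{(i)}$, and handle the growing threshold by comparing with a fixed level $T$ (your $L_0^2$) before letting $T\to\infty$. The only cosmetic difference is that the paper collapses the two probabilities into a single one (with a factor $2$) early on, by noting $\delta>\tfrac{\epsilon b_n}{M\sqrt n}$ for large $n$, whereas you carry both terms to the end.
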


\begin{proof}
We need to show that, for any $\epsilon \in \R_+$,
\begin{equation*}
    \lim_{n \rightarrow \infty} \frac{1}{b_n^2} \log \left( \prb\left[ \frac{\sqrt{n}}{b_n} \left\lvert  R \left (Z_n^{(1)}, \dotsc, Z_n^{(d)} \right) \right\rvert > \epsilon \right] \right) = - \infty. 
\end{equation*}
Let \(\epsilon \in \R_+\). We take a closer look at the probability in the previous expression, where we get, for all $n \in \N$ sufficiently large,
\begin{align*}
\prb\left[ \frac{\sqrt{n}}{b_n} \left\lvert R \left (Z_n^{(1)}, \dotsc, Z_n^{(d)} \right) \right\rvert > \epsilon \right]  & =
\prb\left[ \frac{\sqrt{n}}{b_n} \left\lvert R \left (Z_n^{(1)}, \dotsc, Z_n^{(d)} \right) \right\rvert > \epsilon, \left\lVert \left (Z_n^{(1)}, \dotsc, Z_n^{(d)} \right) \right\rVert_2^2 \leq \delta \right] \\
& \quad + 
\prb\left[ \frac{\sqrt{n}}{b_n} \left\lvert R \left (Z_n^{(1)}, \dotsc, Z_n^{(d)} \right) \right\rvert > \epsilon, \left\lVert \left (Z_n^{(1)}, \dotsc, Z_n^{(d)} \right) \right\rVert_2^2 > \delta \right] \\
& \leq \prb\left[ \left\lVert \left (Z_n^{(1)}, \dotsc, Z_n^{(d)} \right) \right\rVert_2^2  > \frac{\epsilon b_n}{M \sqrt{n}}  \right]
+
\prb\left[ \left\lVert \left (Z_n^{(1)}, \dotsc, Z_n^{(d)} \right) \right\rVert_2^2 > \delta \right] \\
& \leq 2 \prb\left[ \left\lVert \left (Z_n^{(1)}, \dotsc, Z_n^{(d)} \right) \right\rVert_2^2  > \frac{\epsilon b_n}{M \sqrt{n}}  \right].
\end{align*}
Here we have used the local properties of $R$. The estimate in the last inequality holds since $\delta > \frac{\epsilon b_n}{M \sqrt{n}} $ if $n$ is sufficiently large. We further get
\begin{align*}
    \prb\left[ \left\lVert \left (Z_n^{(1)}, \dotsc, Z_n^{(d)} \right) \right\rVert_2^2  > \frac{\epsilon b_n}{ M \sqrt{n}}  \right]
    & \leq 
    \sum_{i=1}^d \prb\left[  \left( Z_n^{(i)} \right )^2  > \frac{\epsilon b_n}{ d M \sqrt{n}} \right] \\
    & \leq d \max_{i\in \{1, \dotsc, d\}} \prb\left[  \left( Z_n^{(i)} \right )^2  > \frac{\epsilon b_n}{ d M \sqrt{n}}  \right] \\
    & = d \max_{i\in \{1, \dotsc, d\}} \prb\left[  \left( \frac{\sqrt{n}}{b_n} Z_n^{(i)} \right )^2  > \frac{\epsilon \sqrt{n} }{ d M b_n}  \right].
\end{align*}
Let $T \in /R_+$, then, for sufficiently large $n \in \N$, we have $\frac{\epsilon \sqrt{n} }{ d M b_n} > T $, and thus, for each \(i \in \{1, \dotsc, d\}\),
\begin{equation*}
    \prb\left[ \left( \frac{\sqrt{n}}{b_n} Z_n^{(i)} \right )^2 > \frac{\epsilon \sqrt{n} }{ d M b_n}  \right] \leq 
    \prb\left[ \left( \frac{\sqrt{n}}{b_n} Z_n^{(i)} \right )^2 > T \right].
\end{equation*}
Since, for each $i \in \{1, \dotsc, d\}$, $ \left( \frac{\sqrt{n}}{b_n} Z_n^{(i)}\right)_{n \in \N}$ satisfies an MDP in $\R$ at speed $b_n^2$ with GRF $ x \mapsto \frac{x^2}{2 \sigma_i^2}$, we get
\begin{equation*}
\limsup_{n \to \infty} \frac{1}{b_n^2} \log\Biggl( \prb\Biggl[ \Biggl( \frac{\sqrt{n}}{b_n} \, Z_n^{(i)} \Biggr)^2 > T \Biggr] \Biggr) \leq -\inf_{x^2 \geq T} \frac{x^2}{2 \sigma_i^2} = -\frac{T}{2 \sigma_i^2}.
\end{equation*}
This leads to
\begin{align*}
 \limsup_{n \rightarrow \infty} \frac{1}{b_n^2} \log \left( \prb\left[ \frac{\sqrt{n}}{b_n} \left\lvert R \left (Z_n^{(1)}, \dotsc, Z_n^{(d)} \right) \right\rvert > \epsilon \right] \right) 
 & \leq 
 \max_{i\in \{1, \dotsc, d\}} \limsup_{n \rightarrow \infty} \frac{1}{b_n^2} \log \prb\left[  \left( \frac{\sqrt{n}}{b_n} Z_n^{(i)} \right )^2  > T  \right] \\
 & \leq \max_{i\in \{1, \dotsc, d\}} \Biggl( -\frac{T}{2 \sigma_i^2} \Biggr)\\
 & = -\frac{T}{2} \min_{i\in \{1, \dotsc, d\}}  \sigma_i^{-2}.
\end{align*}
By taking the limit $T \rightarrow \infty$, we establish the claimed exponential equivalence at speed $b_n^2$. 
\end{proof}

The following lemma is of a combinatorial nature; its only use is in the proof of Proposition~\ref{LemMDPUStat} given below.

\begin{lem}\label{LemSummen}
Let \(k, n \in \N\) with \(2 \leq k \leq n\), denote by \(B_k^{(n)}\) the set of \(k\)\=/tuples of integers in \([1, n]\) such that at least two components are equal, and let \(x_1, \dotsc, x_n \in [0, \infty)\). Then,
\begin{equation*}
\sum_{i \in B_k^{(n)}} x_{i_1} \dotsm x_{i_k} \leq \binom{k}{2} \sum_{i = 1}^n x_i^2 \Biggl( \sum_{i = 1}^n x_i \Biggr)^{k - 2}.
\end{equation*}
\end{lem}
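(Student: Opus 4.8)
The plan is to bound the left\-/hand side by a union bound over the pairs of coordinates that can coincide, and then evaluate each resulting sum by a trivial factorization. For \(1 \le a < b \le k\) introduce the set \(D_{a,b} := \{ i \in \{1, \dotsc, n\}^k \Colon i_a = i_b \}\) of \(k\)\=/tuples whose \(a\)\textsuperscript{th} and \(b\)\textsuperscript{th} entries agree. A tuple belongs to \(B_k^{(n)}\) exactly when at least two of its entries coincide, so \(B_k^{(n)} = \bigcup_{1 \le a < b \le k} D_{a,b}\); since every \(x_i\) is nonnegative, each term \(x_{i_1} \dotsm x_{i_k}\) is nonnegative, and hence
\[
\sum_{i \in B_k^{(n)}} x_{i_1} \dotsm x_{i_k} \le \sum_{1 \le a < b \le k} \sum_{i \in D_{a,b}} x_{i_1} \dotsm x_{i_k}.
\]

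Next I would compute the inner sum for a fixed pair \((a, b)\). Writing \(j\) for the common value \(i_a = i_b\), the product \(x_{i_1} \dotsm x_{i_k}\) carries the factor \(x_j^2\) together with one free factor \(x_{i_l}\) for each of the \(k - 2\) remaining indices \(l \notin \{a, b\}\), each ranging independently over \(\{1, \dotsc, n\}\). Therefore the sum factorizes as
\[
\sum_{i \in D_{a,b}} x_{i_1} \dotsm x_{i_k} = \Biggl( \sum_{j=1}^n x_j^2 \Biggr) \Biggl( \sum_{l=1}^n x_l \Biggr)^{k - 2},
\]
which is also valid when \(k = 2\), the empty product being \(1\).

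Finally, the right\-/hand side above is independent of the chosen pair, and there are exactly \(\binom{k}{2}\) pairs \((a, b)\) with \(1 \le a < b \le k\), so summing over all of them gives the asserted inequality. There is essentially no obstacle; the only subtlety worth a remark is that the union \(\bigcup_{a < b} D_{a,b}\) is not disjoint — a tuple with three or more coincident entries is counted several times — but this over\-/counting can only enlarge the nonnegative right\-/hand side, so the estimate is unaffected.
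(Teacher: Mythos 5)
Your proof is correct, and it takes a genuinely different (and arguably cleaner) route than the paper's. The paper rewrites both sides as multinomial sums over multiindices $\alpha \in \N_0^n$, uses the multinomial theorem to identify the coefficient of each monomial $x^\alpha$ in $\sum_i x_i^2 \bigl( \sum_i x_i \bigr)^{k-2}$, and shows this coefficient, multiplied by $\binom{k}{2}$, dominates the multinomial coefficient $\binom{k}{\alpha}$ appearing on the left; the argument hinges on the algebraic identity $\sum_{j} \binom{k-2}{\alpha - 2 e_{i_j}} = \binom{k}{\alpha} \binom{k}{2}^{-1} \sum_j \binom{\alpha_{i_j}}{2} \geq \binom{k}{\alpha} \binom{k}{2}^{-1}$. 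Your approach instead covers $B_k^{(n)}$ by the $\binom{k}{2}$ sets $D_{a,b}$ of tuples whose $a$\textsuperscript{th} and $b$\textsuperscript{th} entries agree, applies a union bound (legitimate because all summands are nonnegative), and evaluates each inner sum exactly by factorization into $\bigl(\sum_j x_j^2\bigr)\bigl(\sum_l x_l\bigr)^{k-2}$. This sidesteps the multinomial bookkeeping entirely and makes transparent where the factor $\binom{k}{2}$ comes from. The paper's coefficient-by-coefficient comparison yields a slightly finer piece of information (the precise over-counting ratio $\sum_j \binom{\alpha_{i_j}}{2}$ for each $\alpha$), but since that extra precision is not used anywhere, your proof is an equally valid and more economical substitute.
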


\begin{proof}
Call \(C_{n, k} := \{\alpha \in \N_0^n \Colon \lvert \alpha \rvert = k \wedge \exists i \in \{1, \dotsc, n\} \colon \alpha_i \geq 2\}\), the set of multiindices which sum to \(k\) and of which at least one component is no smaller than \(2\), then we can write (recall the notations \(x^\alpha = \prod_{i = 1}^n x_i^{\alpha_i}\) and \(\binom{k}{\alpha} = k! \bigl( \prod_{i = 1}^n \alpha_i! \bigr)^{-1}\))
\begin{equation*}
\sum_{i \in B_k^{(n)}} x_{i_1} \dotsm x_{i_k} = \sum_{\alpha \in C_{n, k}} \binom{k}{\alpha} x^\alpha.
\end{equation*}
Using the multinomial theorem we get
\begin{equation}\label{EqMultinom}
\sum_{i = 1}^n x_i^2 \Biggl( \sum_{i = 1}^n x_i \Biggr)^{k - 2} = \sum_{i = 1}^n \sum_{\substack{\alpha \in \N_0^n \\ \lvert \alpha \rvert = k - 2}} \binom{k - 2}{\alpha} x_i^2 x^\alpha.
\end{equation}
The main task is now to count how often a given monomial \(x^\alpha\), for \(\alpha \in C_{n, k}\), appears on the right\-/hand side of~\eqref{EqMultinom}. Let \(\alpha \in C_{n, k}\), then \(m := \lvert \{ i \in \{1, \dotsc, n\} \Colon \alpha_i \geq 2\} \rvert \geq 1\), and let \(i_1, \dotsc, i_m \in \{1, \dotsc, n\}\) be an enumeration of the indices for which \(\alpha_i \geq 2\). Then \(x^\alpha = x_{i_j}^2 x^{\alpha - 2 e_{i_j}}\) for any \(j \in \{1, \dotsc, m\}\) (here \(e_i\) denotes the \(i\)\textsuperscript{th} canonical unit vector in \(\R^n\)), and the latter term occurs \(\binom{k - 2}{\alpha - 2 e_{i_j}}\) times on the right\-/hand side of~\eqref{EqMultinom}, and those are all occurrences of \(x^\alpha\) on said right\-/hand side. Thus the coefficient of \(x^\alpha\) in~\eqref{EqMultinom} equals
\begin{align*}
\sum_{j = 1}^m \binom{k - 2}{\alpha - 2 e_{i_j}} &= \sum_{j = 1}^m \frac{(k - 2)!}{\alpha_1! \dotsm \alpha_{i_j - 1}! (\alpha_{i_j} - 2)! \alpha_{i_j + 1}! \dotsm \alpha_n!}\\
&= \sum_{j = 1}^m \binom{k}{\alpha} \frac{\alpha_{i_j} (\alpha_{i_j} - 1)}{k (k - 1)}\\
&= \binom{k}{\alpha} \binom{k}{2}^{-1} \sum_{j = 1}^m \binom{\alpha_{i_j}}{2}.
\end{align*}
It remains to notice that \(m \geq 1\) and \(\binom{\alpha_{i_1}}{2} \geq 1\), and the claim follows.
\end{proof}

\begin{proof}[Proof of Proposition~\ref{LemMDPUStat}]
We recall that, by definition, $U_n( \phi^{(k)}) = \binom{n}{k}^{-1} \sum_{i \in J_k^{(n)}} X_{i_1} \dotsm X_{i_k}$. We show the result in the case of $k\geq 2$, since it is immediately true for $k=1$. For clarity's sake we are going to break down into several steps the proof that \(\bigl( \frac{\sqrt{n}}{b_n} (U_n(\phi^{(k)}) - m^k) \bigr)_{n \geq k}\) and \(\bigl( \frac{\sqrt{n}}{b_n} \, \frac{k m^{k - 1}}{n} \sum_{i = 1}^n (X_i - m) \bigr)_{n \geq k}\) are exponentially equivalent at speed \(b_n^2\), as follows:
\begin{compactenum}
\item The following are exponentially equivalent at speed \(b_n^2\):
    \begin{equation*}
    \Biggl( \frac{\sqrt{n}}{b_n} \Biggl( \frac{1}{n} \sum_{i = 1}^n X_i \Biggr)^k \Biggr)_{n \geq k} \quad \text{and} \quad \Biggl( \frac{\sqrt{n}}{b_n} \, \frac{k!}{n^k} \sum_{i \in J_k^{(n)}} X_{i_1} \dotsm X_{i_k} \Biggr)_{n \geq k}.
    \end{equation*}
\item The following are exponentially equivalent at speed \(b_n^2\):
    \begin{equation*}
    \Biggl( \frac{\sqrt{n}}{b_n} \, \frac{k!}{n^k} \sum_{i \in J_k^{(n)}} X_{i_1} \dotsm X_{i_k} \Biggr)_{n \geq k} \quad \text{and} \quad \Biggl( \frac{\sqrt{n}}{b_n} U_n(\phi^{(k)}) \Biggr)_{n \geq k}.
    \end{equation*}
\item The following are exponentially equivalent at speed \(b_n^2\):
    \begin{equation*}
    \Biggl( \frac{\sqrt{n}}{b_n} \bigl( U_n(\phi^{(k)}) - m^k \bigr) \Biggr)_{n \geq k} \quad \text{and} \quad \Biggl( \frac{\sqrt{n}}{b_n} \Biggl( \Biggl( \frac{1}{n} \sum_{i = 1}^n X_i \Biggr)^k - m^k \Biggr) \Biggr)_{n \geq k}.
    \end{equation*}
\item The following are exponentially equivalent at speed \(b_n^2\):
    \begin{equation*}
    \Biggl( \frac{\sqrt{n}}{b_n} \Biggl( \Biggl( \frac{1}{n} \sum_{i = 1}^n X_i \Biggr)^k - m^k \Biggr) \Biggr)_{n \geq k} \quad \text{and} \quad \Biggl( \frac{\sqrt{n}}{b_n} \, \frac{k m^{k - 1}}{n} \sum_{i = 1}^n (X_i - m) \Biggr)_{n \geq k}.
    \end{equation*}
\end{compactenum}
Steps~3 and~4 imply the claim, as exponential equivalence is an equivalence relation indeed and, hence, transitive.

The MDP for \(\bigl( \frac{\sqrt{n}}{b_n} (U_n(\phi^{(k)}) - m^k) \bigr)_{n \geq k}\) is an easy consequence then: by assumption \(\Erw[\exp(t_0 X_1)] < \infty\) for some \(t_0 \in \R_+\), hence also \(\Erw[\exp(t_1 k m^{k - 1} X_1)] < \infty\) for \(t_1 = \frac{t_0}{k m^{k - 1}}\), and therefore by Proposition~\ref{ThmCramerMDP}, \(\Bigl( \frac{\sqrt{n}}{b_n} \, \frac{1}{n} \sum_{i = 1}^n k m^{k - 1} (X_i - m) \Bigr)_{n \geq k}\) satisfies an MDP at speed \(b_n^2\) with GRF \(x \mapsto \frac{x^2}{2 \Var[k m^{k - 1} X_1]} = \frac{x^2}{2 \sigma^2}\). Thus by Proposition~\ref{PropExpEquLDP} and the exponential equivalence established before, \(\bigl( \frac{\sqrt{n}}{b_n} (U_n(\phi^{(k)}) - m^k) \bigr)_{n \geq k}\) satisfies the same MDP.

\textit{Step~1.} On the one hand we can write $\frac{k!}{n^k} \sum_{i \in J_k^{(n)}} X_{i_1} \dotsm X_{i_k} = n^{-k} \sum_{i \in I_{k}^{(n)} } X_{i_1} \dotsm X_{i_k}$, where $I_k^{(n)}$ denotes the set of $k$\=/tuples of pairwise distinct indices from \(\{1, \dotsc, n\}\). On the other hand we see $\bigl( \frac{1}{n} \sum_{i=1}^n X_i \bigr)^k = n^{-k} \sum_{i_1, \dotsc, i_k=1}^n X_{i_1} \dotsm X_{i_k}$. Thus we get
\begin{equation*}
\frac{k!}{n^k} \sum_{i \in J_k^{(n)}} X_{i_1} \dotsm X_{i_k} - \Biggl( \frac{1}{n} \sum_{i=1}^n X_i \Biggr)^k = n^{-k} \sum_{i \in B_{k}^{(n)}} X_{i_1} \dotsm X_{i_k},
\end{equation*}
where $B_{k}^{(n)}$ is defined as in Lemma~\ref{LemSummen}, that is, \(B_k^{(n)} = \{1, \dotsc, n\}^k \setminus I_k^{(n)}\). Employing said lemma we get

\begin{equation*}
0 \leq n^{-k} \sum_{i \in B_{k}^{(n)}} X_{i_1} \dotsm X_{i_k} \leq \frac{1}{n} \binom{k}{2} \frac{1}{n} \sum_{i=1}^n X_i^2 \Biggl( \frac{1}{n} \sum_{i=1}^n X_i \Biggr)^{k-2}.
\end{equation*}
The claimed exponential equivalence means that we need to show, for any $\epsilon \in \R_+$,
\begin{equation*}
\lim_{n \rightarrow \infty} \frac{1}{b_n^2} \log \left( \prb\left[ \left\lvert \frac{\sqrt{n}}{b_n} \, \frac{k!}{n^k} \sum_{i \in J_k^{(n)}} X_{i_1} \dotsm X_{i_k} - \frac{\sqrt{n}}{b_n} \Biggl( \frac{1}{n} \sum_{i=1}^n X_i \Biggr)^k \right\rvert > \epsilon \right] \right) = -\infty.
\end{equation*}
Let \(\epsilon > 0\). By the previous calculation it suffices to show
\begin{equation*}
\lim_{n \rightarrow \infty} \frac{1}{b_n^2} \log \left( \prb\left[
\frac{1}{b_n \sqrt{n}}
\frac{1}{n} \sum_{i=1}^n X_i^2 \left( \frac{1}{n} \sum_{i=1}^n X_i \right)^{k-2}
> \epsilon  
\right] \right) =- \infty.
\end{equation*}
We take a closer look at the probability in the logarithm, where we get, for any $n \in \N$ and $M \in \R_+$,
\begin{align*}
\prb\left[ \frac{1}{\sqrt{n}b_n} \frac{1}{n} \sum_{i=1}^n X_i^2 \left( \frac{1}{n} \sum_{i=1}^n X_i \right)^{k-2} > \epsilon \right] & = 
\prb\left[ \frac{1}{\sqrt{n}b_n} \frac{1}{n} \sum_{i=1}^n X_i^2 \left( \frac{1}{n} \sum_{i=1}^n X_i \right)^{k-2} > \epsilon, \left( \frac{1}{n} \sum_{i=1}^n X_i \right)^{k-2} > M \right]\\
& \quad + 
\prb\left[ \frac{1}{\sqrt{n}b_n} \frac{1}{n} \sum_{i=1}^n X_i^2 \left( \frac{1}{n} \sum_{i=1}^n X_i \right)^{k-2} > \epsilon, \left( \frac{1}{n} \sum_{i=1}^n X_i \right)^{k-2} \leq M \right]\\
& \leq  
\prb\Biggl[ \Biggl( \frac{1}{n} \sum_{i=1}^n X_i \Biggr)^{k-2} > M \Biggr]
+
\prb\Biggl[ \frac{1}{\sqrt{n}b_n} \frac{1}{n} \sum_{i=1}^n X_i^2 > \frac{\epsilon}{M} \Biggr]\\
& =: \alpha_n + \beta_n.
\end{align*}
Now it suffices to prove \(\limsup_{n \to \infty} \frac{1}{b_n^2} \log(\alpha_n) = -\infty\) and \(\limsup_{n \to \infty} \frac{1}{b_n^2} \log(\beta_n) = -\infty\), because then
\begin{equation}\label{EqLogSumme}
\limsup_{n \to \infty} \frac{1}{b_n^2} \log(\alpha_n + \beta_n) = \sup\Biggl\{ \limsup_{n \to \infty} \frac{1}{b_n^2} \log(\alpha_n), \limsup_{n \to \infty} \frac{1}{b_n^2} \log(\beta_n) \Biggr\} = -\infty.
\end{equation}
Since there exists a $ t_0 \in \R_+$ with $ \Erw[ \exp( t_0 X_1) ] < \infty $ by assumption, we know from Theorem~\ref{ThmCramer} that $ (X_i)_{i \in \N}$ satisfies an LDP in $\R$ at speed $n$ with GRF $ \I \colon \R \rightarrow [0, \infty] $ which is the Legendre transform of the cumulant generating function of $X_1$. Therefore,
\begin{equation*}
\limsup_{n \to \infty} \frac{1}{n} \log(\alpha_n) \leq -\inf_{x \geq M^{1/(k - 2)}} \I(x).
\end{equation*}
If $M \in \R_+$ is sufficiently large, to wit $ M^{1/(k-2)} > \Erw[X_1] $, then by the properties of $\I$, $\inf_{x \geq M^{1/(k-2)} } \I(x) = \I(M^{1/(k - 2)}) > 0$. As \(\lim_{n \to \infty} \frac{n}{b_n^2} = \infty\), we obtain
\begin{equation*}
\limsup_{n \to \infty} \frac{1}{b_n^2} \log(\alpha_n) = \limsup_{n \to \infty} \frac{n}{b_n^2} \, \frac{1}{n} \log(\alpha_n) = -\infty.
\end{equation*}

Now we turn to $(\beta_n)_{n \in \N}$. By assumption, we know that there exists a $t_0 \in \R_+$ with $\Erw[\exp(t_0 X_1)] < \infty$ and hence, by Markov's inequality, we find constants $b, c \in \R_+$ such that $ \prb[ X_1 \geq t] \leq c \exp(-b t)$, for $t \in \R_+$. Thus we have $\prb[X_1^2 \geq t] \leq c \exp(-b t^{1/2})$ for any $t \in \R_+$. First we assume $\liminf_{n \rightarrow \infty} \frac{\sqrt{n}}{b_n^2} > 0$. Since $\lim_{n \to \infty} \sqrt{n} b_n = \infty$, for any $A \in (\Erw[X_1^2], \infty)$ there exists \(n_0 \geq k\) such that we have $\frac{ \sqrt{n}b_n \epsilon}{M} \geq A$ for all \(n \geq n_0\), and hence, for all \(n \geq n_0\),
\begin{equation*}
\beta_n = \prb\left[ \frac{1}{\sqrt{n}b_n} \, \frac{1}{n} \sum_{i=1}^n X_i^2 > \frac{\epsilon}{M} \right ] = \prb\left[ \frac{1}{n} \sum_{i=1}^n X_i^2 > \frac{ \sqrt{n}b_n \epsilon}{M} \right] \leq \prb\left[ \frac{1}{n} \sum_{i=1}^n X_i^2 > A \right].
\end{equation*}
By Proposition~\ref{LemStrechedExp}, we get
\begin{equation*}
\limsup_{n \to \infty} \frac{1}{\sqrt{n}} \log\Biggl( \prb\Biggl[ \frac{1}{n} \sum_{i = 1}^n X_i^2 > A \Biggr] \Biggr) \leq -b (A - \Erw[X_1^2])^{1/2},
\end{equation*}
and hence, since \(A\) is arbitrary,
\begin{equation*}
\limsup_{n \to \infty} \frac{1}{\sqrt{n}} \log\Biggl( \prb\Biggl[ \frac{1}{n} \sum_{i = 1}^n X_i^2 > A \Biggr] \Biggr) = -\infty.
\end{equation*}
This implies
\begin{equation*}
\limsup_{n \rightarrow \infty} \frac{1}{b_n^2} \log(\beta_n) \leq \limsup_{n \rightarrow \infty} \frac{\sqrt{n}}{b_n^2} \, \frac{1}{\sqrt{n}} \log\Biggl( \prb\Biggl[ \frac{1}{n} \sum_{i = 1}^n X_i^2 > A \Biggr] \Biggr) = -\infty.
\end{equation*}
Now we consider the case $\liminf_{n \rightarrow \infty} \frac{\sqrt{n}}{b_n^2} = 0$; and w.l.o.g.\ we can assume $\lim_{n \rightarrow \infty} \frac{\sqrt{n}}{b_n^2} = 0$, otherwise switch to a corresponding subsequence (all other accumulation points have been dealt with in the previous case). Then there exists \(n_1 \geq k\) such that \(b_n^4 \geq n\) for all \(n \geq n_1\), and then \(\sum_{i = 1}^n X_i^2 \leq \sum_{i = 1}^{b_n^4} X_i^2\). Moreover, because we still have \(\lim_{n \to \infty} \frac{\sqrt{n}}{b_n} = \infty\), for any \(A \in (\Erw[X_1^2], \infty)\) there exists \(n_2 \geq k\) such that \(\frac{\epsilon n \sqrt{n}}{b_n^3} \geq A\) for all \(n \geq n_2\). This yields, for all \(n \geq \max\{n_1, n_2\}\),
\begin{equation*}
\beta_n = \prb\left[ \frac{1}{\sqrt{n}b_n} \, \frac{1}{n} \sum_{i=1}^n X_i^2 > \frac{\epsilon}{M} \right ] \leq \prb\Biggl[ \frac{1}{b_n^4} \sum_{i = 1}^{b_n^4} X_i^2 > A \Bigr],
\end{equation*}
and therefore,
\begin{align*}
\limsup_{n \rightarrow \infty} \frac{1}{b_n^2} \log(\beta_n)
\leq \limsup_{n \rightarrow \infty} \frac{1}{b_n^2} \log\Biggl( \prb\Biggl[ \frac{1}{b_n^4} \sum_{i=1}^{b_n^4} X_i^2 > A \Biggr] \Biggr)
\leq -b (A - \Erw[X_1^2])^{1/2},
\end{align*}
where again we have used Proposition~\ref{LemStrechedExp} in the last step. Now by taking the limit $A \rightarrow \infty$, we get the claimed exponential equivalence.
 
\textit{Step~2.} We now show that $\bigl( \frac{\sqrt{n}}{b_n} \frac{k!}{n^k} \sum_{i \in J_k^{(n)}} X_{i_1} \dotsm X_{i_k} \bigr)_{n \geq k}$ and $\bigl( \frac{\sqrt{n}}{b_n} U_n(\phi^{(k)}) \bigr)_{n \geq k}$ are exponentially equivalent at speed $(b_n^2)_{n \in \N}$. Let $\delta_n := \frac{n^k}{k!} \binom{n}{k}^{-1} - 1$ and note $n \delta_n \to \frac{k (k - 1)}{2}$ as $n \to \infty$, and hence $\frac{\sqrt{n}}{b_n} \delta_n \to 0$ for any sequence $(b_n)_{n \in \N}$ with $\lim_{n \to \infty} b_n = \infty$. For $\epsilon \in \R_+$ we get
\begin{align*}
\prb\Biggl[ \frac{\sqrt{n}}{b_n} \Biggl\lvert U_n(\phi^{(k)}) - \frac{k!}{n^k} \sum_{i \in J_k^{(n)}} X_{i_1} \dotsm X_{i_k} \Biggr\rvert > \epsilon \Biggr] & = 
\prb\Biggl[ \frac{\sqrt{n}}{b_n} \delta_n \Biggl\lvert \frac{k!}{n^k} \sum_{i \in J_k^{(n)}} X_{i_1} \dotsm X_{i_k} \Biggr\rvert > \epsilon \Biggr] \\
& \leq 
\prb\Biggl[ \frac{\sqrt{n}}{b_n} \delta_n \Biggl\lvert \frac{k!}{n^k} \sum_{i \in J_k^{(n)}} X_{i_1} \dotsm X_{i_k}  - \left( \frac{1}{n} \sum_{i=1}^n X_i \right)^k \Biggr\rvert > \frac{\epsilon}{2} \Biggr]\\ 
& \quad +
\prb\Biggl[ \frac{\sqrt{n}}{b_n} \delta_n \Biggl( \frac{1}{n} \sum_{i=1}^n X_i \Biggr)^k > \frac{\epsilon}{2} \Biggr] \\
& =: \alpha_n + \beta_n.
\end{align*}
Again by Equation~\eqref{EqLogSumme} it suffices to show $\lim_{n \rightarrow \infty} \frac{1}{b_n^2} \log(\alpha_n) = -\infty$ and $\lim_{n \rightarrow \infty} \frac{1}{b_n^2} \log(\beta_n) = -\infty$. We start with the sequence $(\beta_n)_{n \geq k}$. Let $A \in (\Erw[X_1], \infty)$ and note that $\frac{\epsilon b_n }{2 \sqrt{n} \delta_n } \geq A^k$ for all $n \in \N$ sufficiently large. We get
 \begin{align*}
 \limsup_{n \rightarrow \infty} \frac{1}{b_n^2 } \log( \beta_n ) & = \limsup_{n \rightarrow \infty} \frac{1}{b_n^2} \log\Biggl( \prb\Biggl[ \frac{\sqrt{n}}{b_n} \delta_n \Biggl( \frac{1}{n} \sum_{i=1}^n X_i \Biggr)^k > \frac{\epsilon}{2} \Biggr] \Biggr) \\
 & \leq \limsup_{n \rightarrow \infty} \frac{1}{b_n^2 } \log\Biggl( \prb\Biggl[ \frac{1}{n} \sum_{i=1}^n X_i > A \Biggr] \Biggr) \\
 & \leq - \limsup_{n \rightarrow \infty}  \frac{n}{b_n^2} \inf_{x \geq A} \I(x) \\
 & = - \infty
 \end{align*}
 where we have used the large deviations upper bound for the sequence $\left( \frac{1}{n} \sum_{i=1}^n X_i \right)_{n \in \N}$ together with $ \inf_{x \geq A} \I(x) = \I(A) \in \R_+$, since $A > \Erw[X_1]$. For the sequence $(\alpha_n)_{n \geq k}$ we get
 \begin{align*}
\limsup_{n \rightarrow \infty} \frac{1}{b_n^2} \log(\alpha_n) 
 & = 
 \limsup_{n \rightarrow \infty} \frac{1}{b_n^2} \log\Biggl(
 \prb\Biggl[ \frac{\sqrt{n}}{b_n} \delta_n \Biggl\lvert \frac{k!}{n^k} \sum_{i \in J_k^{(n)}} X_{i_1} \dotsm X_{i_k} - \Biggl( \frac{1}{n} \sum_{i=1}^n X_i \Biggr)^k \Biggr\rvert > \frac{\epsilon}{2} \Biggr] \Biggr) \\
 & \leq 
 \limsup_{n \rightarrow \infty} \frac{1}{b_n^2} \log\Biggl(
 \prb\Biggl[ \frac{\sqrt{n}}{b_n} \Biggl\lvert \frac{k!}{n^k} \sum_{i \in J_k^{(n)}} X_{i_1} \dotsm X_{i_k} - \Biggl( \frac{1}{n} \sum_{i=1}^n X_i \Biggr)^k \Biggr\rvert > \epsilon  \Biggr] \Biggr) \\
 & = - \infty,
 \end{align*}
where we have used $\frac{\epsilon}{ 2 \delta_n} > \epsilon $ for all $n \in \N$ sufficiently large, and have employed the exponential equivalence at scale $(b_n^2)_{n \in \N}$ of $\bigl( \frac{\sqrt{n}}{b_n} \frac{k!}{n^k} \sum_{i \in J_k^{(n)}} X_{i_1} \dotsm X_{i_k} \bigr)_{n \geq k}$ and $\bigl( \frac{\sqrt{n}}{b_n} \bigl( \frac{1}{n} \sum_{i=1}^n X_i \bigr)^k \bigr)_{n \geq k}$ from Step~1.

\textit{Step~3.} From Steps~1 and~2 and by transitivity of exponential equivalence we know now that \(\bigl( \frac{\sqrt{n}}{b_n} \bigl( \frac{1}{k} \sum_{i = 1}^n X_i \bigr)^k \bigr)_{n \geq k}\) and \(\bigl( \frac{\sqrt{n}}{b_n} \, U_n(\phi^{(k)}) \bigr)_{n \geq n}\) are exponentially equivalent at speed \(b_n^2\). But since trivially
\begin{equation*}
\frac{\sqrt{n}}{b_n} \Biggl( \Biggl( \frac{1}{k} \sum_{i = 1}^n X_i \Biggr)^k - m^k \Biggr) - \frac{\sqrt{n}}{b_n} \bigl( U_n(\phi^{(k)}) - m^k \bigr) = \frac{\sqrt{n}}{b_n} \Biggl( \frac{1}{k} \sum_{i = 1}^n X_i \Biggr)^k - \frac{\sqrt{n}}{b_n} \, U_n(\phi^{(k)}),
\end{equation*}
this immediately implies that also \(\bigl( \frac{\sqrt{n}}{b_n} \bigl( \bigl( \frac{1}{k} \sum_{i = 1}^n X_i \bigr)^k - m^k \bigr) \bigr)_{n \geq k}\) and \(\bigl( \frac{\sqrt{n}}{b_n} (U_n(\phi^{(k)}) - m^k) \bigr)_{n \geq n}\) are exponentially equivalent at speed \(b_n^2\).
 
\textit{Step~4.} The function $x \mapsto x^k$ has the Taylor expansion $x^k = m^k +k m^{k-1} (x - m) + R(x - m)$ around $m$, where there exist $M, \delta \in \R_+$ with $\lvert R(x) \rvert \leq M x^2$ whenever $x^2 < \delta$. This implies that
\begin{equation*}
\frac{\sqrt{n}}{b_n} \left[ \Biggl( \frac{1}{n} \sum_{i=1}^n X_i \Biggr)^k - m^k - k m^{k-1} \Biggl( \frac{1}{n} \sum_{i=1}^n X_i -m \Biggr) \right] = \frac{\sqrt{n}}{b_n} R\Biggl( \frac{1}{n} \sum_{i=1}^n X_i -m \Biggr).
\end{equation*}
Since $\bigl( \frac{1}{\sqrt{n} b_n} \sum_{i=1}^n (X_i - m) \bigr)_{n \in \N}$ satisfies an MDP at speed $b_n^2$ with GRF $x \mapsto \frac{x^2}{2 \Var[X_1]}$, we can apply Lemma~\ref{PropExpEquivalence} to infer that $\bigl( \frac{\sqrt{n}}{b_n} R\bigl( \frac{1}{n} \sum_{i=1}^n X_i - m \bigr) \bigr)_{n \in \N}$ is exponentially equivalent to $0$ at speed $b_n^2$. Thus the sequences $\Bigl( \frac{\sqrt{n}}{b_n} \Bigl( \bigl( \frac{1}{n} \sum_{i=1}^n X_i \bigr)^k - m^k \Bigr) \Bigr)_{n \in \N}$ and $\Bigl( \frac{\sqrt{n}}{b_n} k m^{k-1} \frac{1}{n} \sum_{i=1}^n (X_i - m) \Bigr)_{n \in \N} $ are exponentially equivalent at speed $b_n^2$.
\end{proof}

% % % % % % % % % % % % % % % % % % % % % % % % %
\subsection{Proofs of Theorems~\ref{ThmClt1},~\ref{ThmBE1} and  \ref{ThmMDPRkn}}
\label{SectionProofs}
% % % % % % % % % % % % % % % % % % % % % % % % %

\emph{General observation.} If \(\X \sim \GenDist\), then, because of Proposition~\ref{PropGenDistFJP}, we can write \(\X \GlVert \frac{R^{(n)} Y_p^{(n)}}{\lVert Y_p^{(n)} \rVert_p}\), and then
\begin{equation*}
U_n(\phi^{(k)})(\X) \GlVert \binom{n}{k}^{-1} \sum_{i \in J_k^{(n)}} \prod_{j = 1}^k \frac{(R^{(n)})^p \lvert Y_{j, p} \rvert^p}{\lVert Y_p^{(n)} \rVert_p^p} = \frac{(R^{(n)})^{k p}}{\lVert Y_p^{(n)} \rVert_p^{k p}} U_n(\phi^{(k)})(Y_p^{(n)});
\end{equation*}
this leads to
\begin{equation*}
\Ratio{k_1, k_2, p}{(n)}(\X) = \frac{U_n(\phi^{(k_2)})(\X)^{1/(k_2 p)}}{U_n(\phi^{(k_1)})(\X)^{1/(k_1 p)}} \GlVert \frac{U_n(\phi^{(k_2)})(Y_p^{(n)})^{1/(k_2 p)}}{U_n(\phi^{(k_1)})(Y_p^{(n)})^{1/(k_1 p)}} = \Ratio{k_1, k_2, p}{(n)}(Y_p^{(n)}).
\end{equation*}
In essence, this means that instead of working with \(\X \sim \GenDist\) it suffices to consider \((X_i)_{i \in \N}\) i.i.d.\ \(p\)\-/Gaussian. Of course this has no impact on the case \(\X \sim \Surf_p^{(n)}\); indeed things are more delicate as we do not have a probabilistic representation. In order to establish moderate deviations results for the surface measure $\Surf_p^{(n)}$, we will need the following exponential equivalence of the cone measure $\Keg_p^{(n)}$ and the surface measure $\Surf_p^{(n)}$.

\begin{lem}
    \label{PropConeSurfExpEqui}
Let $p \in [1, \infty)$, for each $n \in \N$ let $D_n \in \Borel(\Sph{p}{n-1})$, and let $(s_n)_{n \in \N}$ be a sequence of positive real numbers such that
\begin{equation*}
    \lim_{n \rightarrow \infty} \frac{\log(n)}{s_n} = 0.
\end{equation*}
Then, we have
\begin{align*}
    \limsup_{n \rightarrow \infty} \frac{1}{s_n } \log ( \Surf_p^{(n)}(D_n)) 
    & = \limsup_{n \rightarrow \infty} \frac{1}{s_n } \log ( \Keg_p^{(n)}(D_n)),\\
    \liminf_{n \rightarrow \infty} \frac{1}{s_n } \log ( \Surf_p^{(n)}(D_n)) 
    & = \liminf_{n \rightarrow \infty} \frac{1}{s_n } \log ( \Keg_p^{(n)}(D_n)), 
\end{align*}
where we use the convention $\log(0) := - \infty$.
\end{lem}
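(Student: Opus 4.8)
The plan is to deduce the statement directly from the uniform two\-/sided bound on the Radon\--Nikodym derivative $\frac{\dd\!\Surf_p^{(n)}}{\dd\!\Keg_p^{(n)}}$ supplied by Proposition~\ref{PropSurfKegDichte}; once that bound is in hand the lemma reduces to a squeezing argument.

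First I would fix $n \in \N$ and $D_n \in \Borel(\Sph{p}{n-1})$ and integrate the pointwise estimate $n^{-\lvert 1/p - 1/2 \rvert} \leq \frac{\dd\!\Surf_p^{(n)}}{\dd\!\Keg_p^{(n)}} \leq n^{\lvert 1/p - 1/2 \rvert}$ over $D_n$ with respect to $\Keg_p^{(n)}$, which yields
\begin{equation*}
n^{-\lvert 1/p - 1/2 \rvert}\,\Keg_p^{(n)}(D_n) \leq \Surf_p^{(n)}(D_n) \leq n^{\lvert 1/p - 1/2 \rvert}\,\Keg_p^{(n)}(D_n).
\end{equation*}
Since the density is bounded away from $0$ and $\infty$ at every point of $\Sph{p}{n-1}$, this in particular shows $\Keg_p^{(n)}(D_n) = 0$ if and only if $\Surf_p^{(n)}(D_n) = 0$, so the convention $\log(0) := -\infty$ is consistent on both sides and it suffices to treat those $n$ for which $\Keg_p^{(n)}(D_n) > 0$. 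Taking logarithms and dividing by $s_n > 0$ gives
\begin{equation*}
\frac{\log\!\Keg_p^{(n)}(D_n)}{s_n} - \frac{\lvert 1/p - 1/2 \rvert \log(n)}{s_n} \leq \frac{\log\!\Surf_p^{(n)}(D_n)}{s_n} \leq \frac{\log\!\Keg_p^{(n)}(D_n)}{s_n} + \frac{\lvert 1/p - 1/2 \rvert \log(n)}{s_n}.
\end{equation*}

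Now I would invoke the hypothesis $\lim_{n \to \infty} \frac{\log(n)}{s_n} = 0$, which forces the error term $\frac{\lvert 1/p - 1/2 \rvert \log(n)}{s_n}$ to tend to $0$. Hence the two sequences $\bigl( \frac{1}{s_n} \log\!\Surf_p^{(n)}(D_n) \bigr)_{n}$ and $\bigl( \frac{1}{s_n} \log\!\Keg_p^{(n)}(D_n) \bigr)_{n}$ differ by a null sequence, and adding a vanishing real sequence to a sequence in $[-\infty, \infty)$ changes neither its $\limsup$ nor its $\liminf$; this gives both claimed identities.

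There is essentially no serious obstacle here: the only points requiring a little care are the interpretation of the extended\-/real convention $\log(0) = -\infty$ (handled by the equivalence of $\Keg_p^{(n)}$\-/nullity and $\Surf_p^{(n)}$\-/nullity of $D_n$) and the observation that the density bound of Proposition~\ref{PropSurfKegDichte} is genuinely valid at \emph{every} point of the sphere, so no exceptional set appears upon integration. It is worth remarking that the same computation says precisely that, in the regime $\log(n) = o(s_n)$, the cone and surface measures are interchangeable on the logarithmic scale $s_n$; this is exactly why the surface\-/measure variant of Theorem~\ref{ThmMDPRkn} requires the stronger condition $b_n / \sqrt{\log(n)} \to \infty$ (so that $s_n = b_n^2$ dominates $\log n$), whereas the cone\-/measure variant does not.
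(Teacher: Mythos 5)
Your argument is correct and coincides with the paper's own proof: both integrate the two\-/sided pointwise bound on $\frac{\dd\!\Surf_p^{(n)}}{\dd\!\Keg_p^{(n)}}$ from Proposition~\ref{PropSurfKegDichte} over $D_n$, take logarithms, divide by $s_n$, and let the $\frac{\log(n)}{s_n}$ error term vanish. Your explicit remark that $\Keg_p^{(n)}(D_n)=0$ iff $\Surf_p^{(n)}(D_n)=0$ (so the $\log(0)=-\infty$ convention is consistent on both sides) is a small but welcome clarification that the paper leaves implicit.
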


\begin{proof}
Let \(n \in \N\). By Proposition~\ref{PropSurfKegDichte} there exists a constant $c \in [0, \frac{1}{2}]$ such that $n^{-c} \leq \frac{\dd\!\Surf_p^{(n)}}{\dd\!\Keg_p^{(n)}}(x) \leq n^c$ for all $x \in \Sph{p}{n-1}$. Thus, for any $D_n \in \Borel(\Sph{p}{n-1})$, we have $ n^{-c} \Keg_p^{(n)}(D_n) \leq \Surf_p^{(n)}(D_n) \leq n^c \Keg_p^{(n)}(D_n)$, which leads to
\begin{equation*}
\frac{-c \log(n)}{s_n} + \frac{1}{s_n} \log(\Keg_p^{(n)}(D_n)) \leq \frac{1}{s_n} \log(\Surf_p^{(n)}(D_n)) \leq \frac{c \log(n)}{s_n} + \frac{1}{s_n} \log(\Keg_p^{(n)}(D_n)).
\end{equation*}
Since by assumption $\lim_{n \rightarrow \infty} \frac{\log(n)}{s_n} = 0$, we can take the limits superior and inferior as $n \rightarrow \infty$ to get the claimed result.  
\end{proof}

\begin{lem}\label{LemTaylorRatio}
Let $k \in \N$ be fixed, $p \in [1, \infty)$, and for each \(n \in \N\) let $\X \sim \GenDist$. Then for the corresponding Maclaurin ratio $ \Ratio{k,n,p}{(n)}$ we have, for any \(n \geq k\),
\begin{equation*}
\Ratio{k,n,p}{(n)} \GlVert \ee^{m_p} \left( 1+ \frac{V_n^{(0)}}{\sqrt{n}} - \frac{V_n^{(k)}}{pk \sqrt{n}} + R \left( \frac{V_n^{(0)}}{\sqrt{n}}, \frac{V_n^{(k)}}{\sqrt{n}} \right)\right),
\end{equation*}
where $m_p$ is defined in Equation~\eqref{DefRelQuant},
\begin{align*}
V_n^{(0)} &= \sqrt{n} \bigl( U_n(\phi^{(0)})(\lvert Y_1 \rvert, \dotsc, \lvert Y_n \rvert) - m_p \bigr),\\
V_n^{(k)} &= \sqrt{n} \bigl( U_n(\phi^{(k)})(\lvert Y_1 \rvert^p, \dotsc, \lvert Y_n \rvert^p) - 1 \bigr),
\end{align*}
with respective kernels $ \phi^{(0)} \colon \R_+ \rightarrow \R$, $x \mapsto \log(x)$, and $ \phi^{(k)} \colon \R_+^k \rightarrow \R$, $(y_1, \dotsc, y_k) \mapsto \prod_{i=1}^k y_i$; \((Y_i)_{i \in \N}\) is a sequence of i.i.d.\ \(p\)\=/Gaussian random variables, and \(R \colon \R^2 \to \R\) has the property that there exist constants $M, \delta \in \R_+$ such that, for any $(x,y) \in \R^2$ with $\lVert (x,y) \rVert_2 \leq \delta$, we have $\lvert R(x,y) \rvert \leq M \rVert (x,y) \rVert_2^2$. 
\end{lem}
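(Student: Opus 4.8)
The plan is to (i) use scale\-/invariance to replace $\X \sim \GenDist$ by a vector of i.i.d.\ $p$\=/Gaussians, (ii) rewrite the two symmetric means entering $\Ratio{k, n, p}{(n)}$ as U\=/statistics of the kernels $\phi^{(0)}$ and $\phi^{(k)}$, and (iii) Taylor\-/expand. None of this is deep; the proof is essentially a routine computation.

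For step (i) I would invoke the general observation at the beginning of this subsection: since $\X \sim \GenDist$ and the Maclaurin ratio is unchanged when $\X$ is multiplied by a positive scalar, one obtains $\Ratio{k, n, p}{(n)}(\X) \GlVert \Ratio{k, n, p}{(n)}(Y_p^{(n)})$, where $Y_p^{(n)} = (Y_1, \dotsc, Y_n)$ has i.i.d.\ $p$\=/Gaussian coordinates; a.s.\ $Y_i \neq 0$ for all $i$, so all the quantities below make sense a.s. For step (ii) I would note that the numerator of $\Ratio{k, n, p}{(n)}(Y_p^{(n)})$, the geometric mean $S_{n, p}^{(n)}(Y_p^{(n)}) = \exp\bigl( \tfrac1n \sum_{i = 1}^n \log\lvert Y_i \rvert \bigr) = \exp\bigl( U_n(\phi^{(0)})(\lvert Y_1 \rvert, \dotsc, \lvert Y_n \rvert) \bigr)$, equals $\ee^{m_p} \exp\bigl( V_n^{(0)}/\sqrt{n} \bigr)$ by the definitions of $m_p$ and $V_n^{(0)}$, while the denominator $S_{k, p}^{(n)}(Y_p^{(n)}) = \bigl( U_n(\phi^{(k)})(\lvert Y_1 \rvert^p, \dotsc, \lvert Y_n \rvert^p) \bigr)^{1/(k p)}$ equals $\bigl( 1 + V_n^{(k)}/\sqrt{n} \bigr)^{1/(k p)}$, using the elementary identity $\Erw[\lvert Y_1 \rvert^p] = 1$ (a one\-/line computation with the density~\eqref{EqDensitypGenGaussian}) and the definition of $V_n^{(k)}$. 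Hence $\Ratio{k, n, p}{(n)}(Y_p^{(n)}) \GlVert \ee^{m_p} \exp\bigl( V_n^{(0)}/\sqrt{n} \bigr) \bigl( 1 + V_n^{(k)}/\sqrt{n} \bigr)^{-1/(k p)}$.

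For step (iii) I would set $g(x, y) := \ee^x (1 + y)^{-1/(k p)}$, which is $C^\infty$ on the half\-/plane $\{y > -1\}$ with $g(0, 0) = 1$, $\partial_x g(0, 0) = 1$, and $\partial_y g(0, 0) = -\tfrac{1}{k p}$; Taylor's theorem then gives $g(x, y) = 1 + x - \tfrac{y}{k p} + R(x, y)$, where I define $R(x, y) := g(x, y) - 1 - x + \tfrac{y}{k p}$ for $y > -1$ and extend $R$ to all of $\R^2$ arbitrarily (say by $0$) on $\{y \leq -1\}$. Choosing $\delta \in \R_+$ small enough that the Euclidean $\delta$\=/ball around the origin lies in $\{y > -1\}$, the $C^2$ remainder estimate furnishes an $M \in \R_+$ with $\lvert R(x, y) \rvert \leq M \lVert (x, y) \rVert_2^2$ whenever $\lVert (x, y) \rVert_2 \leq \delta$. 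Since a.s.\ $1 + V_n^{(k)}/\sqrt{n} = U_n(\phi^{(k)}) > 0$, the pair $\bigl( V_n^{(0)}/\sqrt{n}, V_n^{(k)}/\sqrt{n} \bigr)$ a.s.\ lies in the domain $\{y > -1\}$, so substituting it into $g = 1 + x - \tfrac{y}{k p} + R$ and using the distributional identity from step (ii) yields exactly the claimed formula.

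I do not expect a genuine obstacle here; the only points deserving a moment's care are the passage to i.i.d.\ $p$\=/Gaussian coordinates (which is precisely what the general observation supplies) and ensuring that $R$ is defined as a map on all of $\R^2$ while still carrying a true quadratic bound in a fixed neighbourhood of the origin, which is handled by the arbitrary extension above.
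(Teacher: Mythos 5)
Your argument is correct and follows the paper's own proof essentially step for step: the same reduction to i.i.d.\ $p$-Gaussian coordinates via the general observation, the same rewriting of $S_{n,p}^{(n)}$ and $S_{k,p}^{(n)}$ as (functions of) U-statistics of the kernels $\phi^{(0)}$ and $\phi^{(k)}$, and the same second-order Taylor expansion of $(x,y) \mapsto \ee^x(1+y)^{-1/(kp)}$ around the origin. The extra care you take in noting $\Erw[\lvert Y_1\rvert^p]=1$ explicitly and in extending $R$ from $\{y>-1\}$ to all of $\R^2$ is a welcome tidying-up of two points the paper leaves implicit, but it does not change the substance of the argument.
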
 

\begin{proof}
Let \(n \geq k\). By the general observation on page~\pageref{SectionProofs} we may work directly with \(\X = Y_p^{(n)}\). Notice that we have
\begin{equation*}
S_{n, p}^{(n)} = \Biggl( \prod_{i = 1}^n \lvert X_i \rvert \Biggr)^{1/n} = \exp\Biggl( \frac{1}{n} \sum_{i = 1}^n \log\lvert X_i \rvert \Biggr) = \exp\bigl( U_n(\phi^{(0)})(\lvert X_1 \rvert, \dotsc, \lvert X_n \rvert) \bigr)
\end{equation*}
and
\begin{equation*}
S_{k, p}^{(n)} = \Biggl( \binom{n}{k}^{-1} \sum_{i \in J_{n, k}} \prod_{j = 1}^k \lvert X_{i_j} \rvert^p \Biggr)^{1/(k p)} = \bigl( U_n(\phi^{(k)})(\lvert X_1 \rvert^p, \dotsc, \lvert X_n \rvert^p) \bigr)^{1/(k p)},
\end{equation*}
where the kernels \(\phi^{(0)}\) and \(\phi^{(k)}\) are adopted from the statement of the lemma. Adopting \(V_n^{(0)}\) and \(V_n^{(k)}\) as well, we can write 
\begin{equation}
\label{EqRepMclaurinRatio}
\Ratio{k,n,p}{(n)} \GlVert \frac{  \exp \left( m_p + \frac{V_n^{(0)}}{\sqrt{n}} \right)}{ \left (1 + \frac{V_n^{(k)}}{\sqrt{n}} \right)^{1/(kp)}} = \ee^{m_p} \, \frac{  \exp \left(\frac{V_n^{(0)}}{\sqrt{n}} \right)}{ \left (1 + \frac{V_n^{(k)}}{\sqrt{n}} \right)^{1/(kp)}}.
\end{equation}
Now consider the function $F \colon \R \times (-1, \infty)  \rightarrow \R $ defined by $(x,y) \mapsto \frac{\ee^x}{(1+ y)^{1/(kp)}}$. We note that $F$ is two times continuously differentiable on its domain and thus, by the multidimensional Taylor formula, we can expand $F$ around $0 \in \R^2$ in the following way,
\begin{equation*}
F(x,y) = 1 + x - \frac{y}{pk} + R(x,y),
\end{equation*}
where by Taylor's theorem there exist $M, \delta \in \R_+$ such that, for any $(x,y) \in \R^2$ with $\lVert (x,y)\rVert_2 \leq \delta$, we have $\lvert R(x,y)\rvert \leq M \lVert (x,y)\rVert_2^2$. The rest of the statement in Lemma~\ref{LemTaylorRatio} follows now by plugging in \eqref{EqRepMclaurinRatio} into the Taylor expansion of $F$.
\end{proof}

\begin{proof}[Proof of Theorem~\ref{ThmMDPRkn}]
First we assume $\X \sim \GenDist$. By Lemma~\ref{LemTaylorRatio}, we have, for all \(n \geq k\),
\begin{equation*}
\frac{\sqrt{n}}{b_n} \left( \ee^{-m_p} \Ratio{k,n,p}{(n)} -1 \right) \GlVert \frac{V_n^{(0)}}{b_n} - \frac{V_n^{(k)}}{pk {b_n}}  + \frac{\sqrt{n}}{b_n} \, R\left( \frac{V_n^{(0)}}{\sqrt{n}}, \frac{V_n^{(k)}}{\sqrt{n}} \right),
\end{equation*}
where \(m_p\), \(V_n^{(0)}\), \(V_n^{(k)}\), and \(R\) are as stated in Lemma~\ref{LemTaylorRatio}. By Lemma~\ref{PropExpEquivalence}, $ \left( \frac{\sqrt{n}}{b_n} R \left( \frac{V_n^{(0)}}{\sqrt{n}}, \frac{V_n^{(k)}}{\sqrt{n}} \right) \right)_{n \geq k}$ is exponentially equivalent to $0$ at speed $b_n^2$, and thus $\left( \frac{\sqrt{n}}{b_n} \left(\ee^{-m_p} \Ratio{k,n,p}{(n)} -1 \right) \right)_{n \geq k}$ and $\left( \frac{V_n^{(0)}}{b_n} - \frac{V_n^{(k)}}{pk {b_n}} \right)_{n \geq k}$ are exponentially equivalent at speed $b_n^2$; hence it suffices to prove that \(\Bigl( \frac{V_n^{(0)}}{b_n} - \frac{V_n^{(k)}}{k p b_n} \Bigr)_{n \geq k}\) satisfies the claimed MDP at speed \(b_n^2\) with GRF \(x \mapsto \frac{x^2}{2 s_p^2}\).

Since $ \Erw[\exp(t_0 \log\lvert Y_1\rvert)] = \Erw[\lvert Y_1 \rvert^{t_0}] < \infty$ for all $t_0 \in \R_+$, we know that $ \Bigl( \frac{\sqrt{n}}{b_n} \frac{V_n^{(0)}}{\sqrt{n}} \Bigr)_{n \geq k}$ satisfies an MDP at speed $b_n^2$ with GRF $x \mapsto \frac{x^2}{2 \Var[\log\lvert Y_1 \rvert]}$. Since \(\Erw[\exp ( t_0 \lvert Y_1\rvert^p )] < \infty\) for all \(t_0 \in (0, \frac{1}{p})\), Proposition~\ref{LemMDPUStat} yields that $\Bigl( \frac{\sqrt{n}}{b_n} \frac{V_n^{(k)}}{\sqrt{n}} \Bigr)_{n \in \N}$ satisfies an MDP at speed $b_n^2$ with GRF $x \mapsto \frac{x^2}{2 k^2 \Var[\lvert Y_1\rvert^p]}$ (note \(\Erw[\lvert Y_1 \rvert^p] = 1\)). By Proposition~\ref{LemMDPUStat}, $ \left( \frac{V_n^{(k)}}{pk {b_n}} \right)_{n \geq k}$ and $ \left( \frac{1}{p b_n \sqrt{n}} \sum_{i=1}^k (\lvert Y_i\rvert^p -1) \right)_{n \geq k}$ are exponentially equivalent at speed $b_n^2$, and hence also $ \left( \frac{V_n^{(0)}}{b_n} - \frac{V_n^{(k)}}{pk b_n} \right)_{n \geq k}$ and $\Bigl( \frac{V_n^{(0)}}{b_n} - \frac{1}{p b_n \sqrt{n}} \sum_{i = 1}^n (\lvert Y_i \rvert^p - 1) \Bigr)_{n \geq k} = \Bigl( \frac{1}{b_n \sqrt{n}} \sum_{i=1}^n \Bigl( \log\lvert Y_i\rvert - \frac{1}{p} \lvert Y_i\rvert^p - m_p + \frac{1}{p} \Bigr) \Bigr)_{n \geq k}$ are exponentially equivalent at speed $b_n^2$. The latter is a sum of centered i.i.d.\ random variables with locally finite moment generating function. Thus, $\Bigl( \frac{1}{b_n \sqrt{n}} \sum_{i=1}^n \Bigl( \log\lvert Y_i\rvert - \frac{1}{p} \lvert Y_i\rvert^p - m_p + \frac{1}{p} \Bigr) \Bigr)_{n \geq k}$ satisfies an MDP at speed $b_n^2$ with GRF $x \mapsto \frac{x^2}{2 \Var[\log\lvert Y_1\rvert - \frac{1}{p} \lvert Y_1\rvert^p]} = \frac{x^2}{2 s_p^2}$.

Now consider the case where $\X \sim \Surf_p^{(n)}$; additionally let $\tilde{X}_p^{(n)} \sim \Keg_p^{(n)}$. By the first part of this proof, we know that $ \left( \frac{\sqrt{n}}{b_n} \left(\ee^{-m_p} \Ratio{k,n,p}{(n)}(\tilde{X}_p^{(n)}) - 1 \right) \right)_{n \geq k}$ satisfies an MDP at speed $b_n^2$. We are going to prove the upper bound of the MDP for $\Bigl( \frac{\sqrt{n}}{b_n} \Bigl( \ee^{-m_p} \Ratio{k,n,p}{(n)}(\X) - 1 \Bigr) \Bigr)_{n \geq k}$; the lower bound can be done analogously. To that end, let $A \in \Borel(\R)$ be closed, and we have to show
\begin{equation*}
\limsup_{n \rightarrow \infty} \frac{1}{b_n^2} \log \left( \prb\left[ \frac{\sqrt{n}}{b_n} \left(\ee^{-m_p} \Ratio{k,n,p}{(n)}(\X) -1\right) \in A \right] \right) \leq -\inf_{x \in A} \frac{x^2}{2 s_p^2}.
\end{equation*}
Define the set
\begin{equation*}
D_n := \left \{ x \in \Sph{p}{n-1} \Colon \frac{\sqrt{n}}{b_n} \left(\ee^{-m_p} \Ratio{k, n, p}{(n)}(x) - 1 \right) \in A  \right \}.
\end{equation*}
Then we can write
\begin{equation*}
\prb\left[ \frac{\sqrt{n}}{b_n} \left(\ee^{-m_p} \Ratio{k,n,p}{(n)}(\X) -1\right) \in A \right] = \Surf_p^{(n)}(D_n) \quad \text{and} \quad \prb\left[ \frac{\sqrt{n}}{b_n} \left(\ee^{-m_p} \Ratio{k,n,p}{(n)}(\tilde{X}_p^{(n)}) - 1 \right) \in A \right] = \Keg_p^{(n)}(D_n).
\end{equation*}
From Lemma~\ref{PropConeSurfExpEqui}, we get
\begin{equation*}
     \limsup_{n \rightarrow \infty} \frac{1}{b_n^2} \log \left( \Surf_p^{(n)} ( D_n) \right)  = \limsup_{n \rightarrow \infty} \frac{1}{b_n^2} \log \left( \Keg_p^{(n)} ( D_n) \right) \leq - \inf_{x \in A} \frac{x^2}{2 s_p^2},
\end{equation*}
where we also have used $\lim_{n \rightarrow \infty} \frac{\log(n)}{b_n^2} = 0$.

\end{proof}

\begin{proof}[Proof of Theorem~\ref{ThmBE1}]
First assume $\X \sim \GenDist$, or equivalently \(\X \sim \Gaussp{p}^{\otimes n}\), by the general observation on page~\pageref{SectionProofs}. For the associated Maclaurin ratio $\Ratio{k,n,p}{(n)}$, we obtain from Lemma~\ref{LemTaylorRatio}
\begin{equation*}
\Ratio{k,n,p}{(n)} \GlVert \ee^{m_p} \left( 1+ \frac{V_n^{(0)}}{\sqrt{n}} - \frac{V_n^{(k)}}{pk \sqrt{n}} + R \left( \frac{V_n^{(0)}}{\sqrt{n}}, \frac{V_n^{(k)}}{\sqrt{n}} \right)\right).
\end{equation*}
By rearranging the terms, we get
\begin{align*}
\sqrt{n} \left( \ee^{-m_p} \Ratio{k,n,p}{(n)} -1 \right) & = V_n^{(0)} - \frac{1}{pk} V_n^{(k)} + \sqrt{n} R\left( \frac{V_n^{(0)}}{\sqrt{n}}, \frac{V_n^{(k)}}{\sqrt{n}} \right)\\
& = \sqrt{n} \left( U_n( \phi^{(0)}) - m_p - \frac{1}{pk} U_n(\phi^{(k)}) +\frac{1}{pk} \right) + \sqrt{n} R\left( \frac{V_n^{(0)}}{\sqrt{n}}, \frac{V_n^{(k)}}{\sqrt{n}} \right)\\
& = \sqrt{n} \left( U_n( \tilde{\phi}^{(k)}) - m_p +\frac{1}{pk} \right) + \sqrt{n} R\left( \frac{V_n^{(0)}}{\sqrt{n}}, \frac{V_n^{(k)}}{\sqrt{n}} \right),
\end{align*}
where we have introduced the kernel $\tilde{\phi}^{(k)} \colon \R^k \rightarrow \R$, $(x_1, \dotsc, x_k) \mapsto \frac{1}{k} \sum_{i=1}^k \log\lvert x_i\rvert - \frac{1}{pk} \lvert x_1 \rvert^p \dotsm \lvert x_k \rvert^p$. Define \(\tilde{V}_n^{(k)} := \sqrt{n} \bigl( U_n(\tilde{\phi}^{(k)}) - m_p + \frac{1}{k p} \bigr)\), and let $Z \sim \Normal(0, s_p^2)$ and $\epsilon \in \R_+$, then we get by Proposition~\ref{PropEstimateKolmogoroff}
\begin{equation}
\label{EqdistKol}
    \dKol\left( \sqrt{n} \left( \ee^{-m_p} \Ratio{k,n,p}{(n)} -1  \right) , Z \right) \leq \dKol\left( \tilde{V}_n^{(k)}, Z \right) + \prb\left[ \sqrt{n} \left\lvert R\left( \frac{V_n^{(0)}}{\sqrt{n}}, \frac{V_n^{(k)}}{\sqrt{n}} \right) \right\rvert > \epsilon \right] + \frac{\epsilon }{\sqrt{2 \pi s_p^2 }}.
\end{equation}
$U_n( \tilde{\phi}^{(k)})$ is a U\=/statistic of order $k$ with $\Erw[\lvert U_n(\tilde{\phi}^{(k)}) \rvert^3] \leq 4 \Erw[\lvert\log \lvert X_1\rvert\rvert^3] + \frac{4}{p^3 k^3} \Erw[\lvert X_1\rvert^{3p}]^k < \infty$, since $X_1 \sim \Gaussp{p}$. Thus, by Proposition~\ref{PropUStat}, \ref{UStatBE}, $\tilde{V}_n^{(k)}$ satisfies a Berry\--Essen bound, i.e., there exists a constant $ C_1 \in \R_+$ such that $ \dKol\left( \tilde{V}_n^{(k)}, Z \right) \leq \frac{C_1}{\sqrt{n}}$ for all \(n \geq k\). The quantity $s_p^2$ from Equation~\eqref{DefRelQuant} is indeed the correct variance, since the sequence $(V_n^{(0)}, V_n^{(k)})_{n \geq k}$ satisfies the CLT for U\=/statistics (see Proposition~\ref{PropUStat}, \ref{USatCLT}), and therefore $\sqrt{n} \Bigl( U_n( \phi^{(0)}) - \frac{1}{pk} U_n( \phi^{(k)}) - m_p +\frac{1}{pk} \Bigr) \KiVert{} \Normal(0, s_p^2)$.

Now we give an upper bound of the term in \eqref{EqdistKol} containing the remainder $R$ from the Taylor expansion. Since there exist constants $M, \delta \in \R_+$ such that $ \lvert R(x,y)\rvert \leq M \lVert(x,y)\rVert_2^2$ provided $\lVert(x,y)\rVert_2^2 \leq \delta$, we can see that
\begin{equation*}
\prb\left[ \sqrt{n} \left\lvert R \left( \frac{V_n^{(0)}}{\sqrt{n}}, \frac{V_n^{(k)}}{\sqrt{n}} \right) \right\rvert > \epsilon \right] \leq \prb\left[ \left\lVert \left ( \frac{V_n^{(0)}}{\sqrt{n}}, \frac{V_n^{(k)}}{\sqrt{n}} \right) \right\rVert_2^2 > \frac{\epsilon}{M \sqrt{n}} \right] + \prb\left[ \left\lVert \left ( \frac{V_n^{(0)}}{\sqrt{n}}, \frac{V_n^{(k)}}{\sqrt{n}} \right) \right\rVert_2^2 > \delta \right].
\end{equation*}
We now choose \(\epsilon = \frac{b_n^2}{\sqrt{n}}\), with some sequence \((b_n)_{n \in \N}\) of positive reals such that \(\lim_{n \to \infty} b_n = \infty\) and \(\lim_{n \to \infty} \frac{b_n}{n^{1/4}} = 0\) to be determined later. Therewith, we get
\begin{align*}
\prb\left[ \sqrt{n} \left\lvert R \left( \frac{V_n^{(0)}}{\sqrt{n}}, \frac{V_n^{(k)}}{\sqrt{n}} \right) \right\rvert > \epsilon \right] &\leq \prb\left[ \left\lVert \left ( \frac{V_n^{(0)}}{\sqrt{n}}, \frac{V_n^{(k)}}{\sqrt{n}} \right) \right\rVert_2^2 > \frac{b_n^2}{M n} \right] + \prb\left[ \left\lVert \left ( \frac{V_n^{(0)}}{\sqrt{n}}, \frac{V_n^{(k)}}{\sqrt{n}} \right) \right\rVert_2^2 > \delta \right]\\
&\leq 2 \prb\left[ \bigl\lVert (V_n^{(0)}, V_n^{(k)}) \bigr\rVert_2 > \frac{b_n}{\sqrt{M}} \right]\\
&\leq 2 \prb\left[ \lvert V_n^{(0)} \rvert > \frac{b_n}{2 \sqrt{M}} \right] + 2 \prb\left[ \lvert V_n^{(k)} \rvert > \frac{b_n}{2 \sqrt{M}} \right],
\end{align*}
where we have used \(\frac{b_n^2}{\sqrt{n}} \leq \delta\) for all \(n \geq k\) large enough, and the inequality \(\lVert (x, y) \rVert_2 \leq \lVert (x, y) \rVert_1 = \lvert x \rvert + \lvert y \rvert\), valid for all \((x, y) \in \R^2\), plus the usual splitting argument. By Proposition~\ref{LemMDPUStat}, either sequence $\left( \frac{1}{b_n} V_n^{(0)} \right)_{n \geq k}$ and $ \left( \frac{1}{b_n} V_n^{(k)} \right)_{n \geq k}$ satisfies an MDP at speed $b_n^2$ with GRF $x \mapsto \frac{x^2}{2 \sigma_i^2}$ with some appropriate $\sigma_i^2 \in \R_+$, for $i \in \{0, k\}$. Therefore
\begin{equation*}
\limsup_{n \to \infty} \frac{1}{b_n^2} \log \prb\left[ \lvert V_n^{(0)} \rvert > \frac{b_n}{2 \sqrt{M}} \right] \leq -\inf_{\lvert x \rvert \geq 1/(2 \sqrt{M})} \frac{x^2}{2 \sigma_0^2} = -\frac{1}{8 M \sigma_0^2},
\end{equation*}
and thus there exists a \(n_0 \geq k\) such that, for any \(n \geq n_0\),
\begin{equation*}
\prb\left[ \lvert V_n^{(0)} \rvert > \frac{b_n}{2 \sqrt{M}} \right] \leq \exp\left( -\frac{b_n^2}{16 M \sigma_0^2} \right);
\end{equation*}
the analogous estimate holds true for \(V_n^{(k)}\), perhaps with a different \(n_k \geq k\). So take \(n \geq \max\{n_0, n_k\}\) and choose \(b_n^2 := 8 M \log(n) \max\{\sigma_0^2, \sigma_k^2\}\), then we obtain, for all \(n \geq k\),
\begin{equation*}
\prb\left[ \sqrt{n} \left\lvert R \left( \frac{V_n^{(0)}}{\sqrt{n}}, \frac{V_n^{(k)}}{\sqrt{n}} \right) \right\rvert > \epsilon \right] \leq 2 \prb\left[ \lvert V_n^{(0)} \rvert > \frac{b_n}{2 \sqrt{M}} \right] + 2 \prb\left[ \lvert V_n^{(k)} \rvert > \frac{b_n}{2 \sqrt{M}} \right] \leq \frac{C_2}{\sqrt{n}},
\end{equation*}
where \(C_2 \in \R_+\) is an appropriate constant. Referring back to Equation~\eqref{EqdistKol} and recalling \(\epsilon = \frac{b_n^2}{\sqrt{n}}\), now we see that there exists a \(C_3 \in \R_+\) such that, for all \(n \geq k\),
\begin{equation*}
\dKol\left( \tilde{V}_n^{(k)}, Z \right) + \prb\left[ \sqrt{n} \left\lvert R\left( \frac{V_n^{(0)}}{\sqrt{n}}  , \frac{V_n^{(k)}}{\sqrt{n}} \right) \right\rvert > \epsilon \right] + \frac{\epsilon }{\sqrt{2 \pi s_p^2 }} \leq \frac{C_3 \sqrt{\log(n)}}{\sqrt{n}}.
\end{equation*}

Now we consider the case $\X \sim \Surf_p^{(n)}$. For each \(n \geq k\) take \(\tilde{X}_p^{(n)} \sim \Keg_p^{(n)}\), then by the first part of this proof, there exists a constant $c \in \R_+$ such that $\dKol\bigl( \sqrt{n} ( \ee^{-m_p} \Ratio{k,n,p}{(n)}(\tilde{X}_p^{(n)}) - 1  ), Z \bigr) \leq \frac{c \sqrt{\log(n)}}{\sqrt{n}}$ for all $n \geq k$, where $Z \sim \Normal(0, s_p^2)$. For any \(t \in \R\) introduce the set $A_{t, n} := \bigl\{ x \in \Sph{p}{n-1} \Colon \sqrt{n} \bigl( \ee^{-m_p} \Ratio{k,n,p}{(n)}(x) - 1 \bigr) \geq t \bigr\}$, then we are lead to
\begin{align*}
\dKol\left( \sqrt{n} \left ( \ee^{-m_p} \Ratio{k,n,p}{(n)}(\X) - 1  \right ), Z \right) & \leq \dKol\left( \sqrt{n} \left ( \ee^{-m_p} \Ratio{k,n,p}{(n)}(\X) - 1  \right ), \sqrt{n} \left (\ee^{-m_p} \Ratio{k,n,p}{(n)}(\tilde{X}_p^{(n)}) - 1  \right ) \right)\\
&\quad + \dKol\left( \sqrt{n} \left (\ee^{-m_p} \Ratio{k,n,p}{(n)}(\tilde{X}_p^{(n)}) - 1 \right ), Z \right) \\
&\leq \sup_{t \in \R} \left\lvert \Surf_p^{(n)}(A_{t, n}) - \Keg_p^{(n)}(A_{t, n}) \right\rvert + \frac{c \sqrt{\log(n)}}{\sqrt{n}} \\
& \leq \frac{c_p}{\sqrt{n}} + \frac{c \sqrt{\log(n)}}{\sqrt{n}}\\
&\leq \frac{C \sqrt{\log(n)}}{\sqrt{n}},
\end{align*}
where we have used Proposition~\ref{PropSurfMeasConeMeasClose} to bound $\sup_{t \in \R} \bigl\lvert \Surf_p^{(n)}(A_{t, n}) - \Keg_p^{(n)}(A_{t, n}) \bigr\rvert$ from above by $\frac{c_p}{\sqrt{n}}$, and lastly have chosen $C \in \R_+$ such that the last inequality holds true for all $n \geq k$.
\end{proof}

\begin{proof}[Proof of Theorem~\ref{ThmClt1}]
This now is an easy consequence of Theorem~\ref{ThmBE1}, since convergence w.r.t.\ the Kolmogorov distance implies convergence in distribution.
\end{proof}

We also provide an alternate, direct proof here, where we assume slightly different, partly more general premises. It is going to use the following auxiliary statement, which in general appears to be well\-/known and widely used; nevertheless for the convenience of the reader we provide a version with its proof that suits precisely our needs.

\begin{lem}\label{LemRestgliedNull}
Let \(d, l \in \N\), let \(R \colon \R^d \to \R\) be a function for which there exist \(M, \delta \in \R_+\) such that \(\lvert R(x) \rvert \leq M \lVert x \rVert_2^l\) for all \(x \in \R^d\) with \(\lVert x \rVert_2 \leq \delta\), and let \((Z_n)_{n \in \N}\) be a sequence of \(\R^d\)\=/valued random variables such that \(\lim_{n \to \infty} n^{1/2 - 1/(2 l)} Z_n = 0\) in probability. Then \(\lim_{n \to \infty} n^{(l - 1)/2} R(Z_n) = 0\) in probability.
\end{lem}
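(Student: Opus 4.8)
The plan is to fix $\epsilon > 0$ and estimate $\prb[n^{(l-1)/2} \lvert R(Z_n) \rvert > \epsilon]$ by splitting according to whether the event $\{\lVert Z_n \rVert_2 \leq \delta\}$ occurs. On that event the assumed local bound on $R$ applies and yields $n^{(l-1)/2} \lvert R(Z_n) \rvert \leq M n^{(l-1)/2} \lVert Z_n \rVert_2^l$. The crucial (elementary) observation is the identity $\frac{l-1}{2} = l \bigl( \frac{1}{2} - \frac{1}{2l} \bigr)$, which gives $n^{(l-1)/2} \lVert Z_n \rVert_2^l = \bigl( n^{1/2 - 1/(2l)} \lVert Z_n \rVert_2 \bigr)^l$; in other words, the scaling $n^{1/2 - 1/(2l)}$ appearing in the hypothesis is precisely the $l$-th root of the scaling $n^{(l-1)/2}$ in the conclusion. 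Writing $W_n := n^{1/2 - 1/(2l)} Z_n$, the hypothesis says $W_n \to 0$ in probability, hence $\lVert W_n \rVert_2^l \to 0$ in probability, and therefore $\prb[M \lVert W_n \rVert_2^l > \epsilon] \to 0$.

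It then remains to handle $\prb[\lVert Z_n \rVert_2 > \delta]$. If $l = 1$ this already tends to $0$, because $W_n = Z_n \to 0$ in probability. If $l \geq 2$, then $n^{1/2 - 1/(2l)} \to \infty$, so $n^{1/2 - 1/(2l)} \delta \geq 1$ for all $n$ large enough, whence $\prb[\lVert Z_n \rVert_2 > \delta] \leq \prb[\lVert W_n \rVert_2 > 1] \to 0$ by assumption. Combining the two parts,
\[
\prb\bigl[ n^{(l-1)/2} \lvert R(Z_n) \rvert > \epsilon \bigr] \leq \prb\bigl[ \lVert Z_n \rVert_2 > \delta \bigr] + \prb\bigl[ M \lVert W_n \rVert_2^l > \epsilon \bigr] \xrightarrow{n \to \infty} 0,
\]
and since $\epsilon > 0$ was arbitrary this is exactly the claim.

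I do not expect any serious obstacle in this lemma; the only points that need a little attention are matching the exponents correctly (i.e.\ recognizing the $l$-th power/root relationship between the two scalings) and treating the degenerate case $l = 1$, where both scalings are trivial, separately from $l \geq 2$, where one genuinely needs $n^{1/2 - 1/(2l)} \to \infty$ to absorb the event $\{\lVert Z_n \rVert_2 > \delta\}$.
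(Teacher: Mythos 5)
Your proof is correct and follows essentially the same route as the paper's: split on the event $\{\lVert Z_n \rVert_2 \leq \delta\}$, use the exponent identity $\frac{l-1}{2} = l\bigl(\frac12 - \frac1{2l}\bigr)$ to rewrite the local bound in terms of $W_n := n^{1/2 - 1/(2l)} Z_n$, and kill both contributions using $W_n \to 0$ in probability. The only (cosmetic) difference is that your case split on $l = 1$ versus $l \geq 2$ is unnecessary: since $1/2 - 1/(2l) \geq 0$ for all $l \geq 1$, one has $n^{1/2 - 1/(2l)} \geq 1$ for every $n$, so $\prb[\lVert Z_n\rVert_2 > \delta] = \prb[\lVert W_n\rVert_2 > n^{1/2-1/(2l)}\delta] \leq \prb[\lVert W_n\rVert_2 > \delta] \to 0$ uniformly in $l$, which is exactly how the paper treats it.
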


\begin{proof}
Let \(\epsilon \in \R_+\), then for all \(n \geq 1\),
\begin{align*}
\prb\bigl[ n^{(l - 1)/2} \lvert R(Z_n) \rvert \geq \epsilon \bigr] &= \prb\bigl[ n^{(l - 1)/2} \lvert R(Z_n) \rvert \geq \epsilon \wedge \lVert Z_n \rVert_2 \leq \delta \bigr] + \prb\bigl[ n^{(l - 1)/2} \lvert R(Z_n) \rvert \geq \epsilon \wedge \lVert Z_n \rVert_2 > \delta \bigr]\\
&\leq \prb\bigl[ n^{(l - 1)/2} M \lVert Z_n \rVert_2^l \geq \epsilon \bigr] + \prb\bigl[ \lVert Z_n \rVert_2 \geq \delta \bigr]\\
&= \prb\Biggl[ \bigl\lVert n^{1/2 - 1/(2 l)} Z_n \bigr\rVert_2 \geq \biggl( \frac{\epsilon}{M} \biggr)^{1/l} \Biggr] + \prb\bigl[ \bigl\lVert n^{1/2 - 1/(2 l)} Z_n \bigr\rVert_2 \geq n^{1/2 - 1/(2 l)} \delta \bigr].
\end{align*}
Because of \(\lim_{n \to \infty} n^{1/2 - 1/(2 l)} Z_n = 0\) in probability, we get immediately
\begin{equation*}
\lim_{n \to \infty} \prb\Biggl[ \bigl\lVert n^{1/2 - 1/(2 l)} Z_n \bigr\rVert_2 \geq \biggl( \frac{\epsilon}{M} \biggr)^{1/l} \Biggr] = 0.
\end{equation*}
Also we have \(n^{1/2 - 1/(2 l)} \geq 1\) for all \(n \in \N\), and this implies
\begin{equation*}
\limsup_{n \to \infty} \prb\bigl[ \bigl\lVert n^{1/2 - 1/(2 l)} Z_n \bigr\rVert_2 \geq n^{1/2 - 1/(2 l)} \delta \bigr] \leq \limsup_{n \to \infty} \prb\bigl[ \bigl\lVert n^{1/2 - 1/(2 l)} Z_n \bigr\rVert_2 \geq \delta \bigr] = 0,
\end{equation*}
and the claim follows.
\end{proof}

\begin{proof}[Alternate proof of Theorem~\ref{ThmClt1}]
Let \((X_i)_{i \in \N}\) be a sequence of i.i.d.\ real\-/valued random variables such that \(\Erw[(\log\lvert X_1 \rvert)^2] < \infty\) and \(\Erw[\lvert X_1 \rvert^{2 p}] < \infty\); write \(\mu := \Erw[\lvert X_1 \rvert^p]\). Then Lemma~\ref{LemTaylorRatio} still holds true up to minor modifications, namely we do not need a separate sequence \((Y_i)_{i \in \N}\) anymore as its role is fulfilled by \((X_i)_{n \in \N}\) itself; \(\phi^{(0)}\), \(\phi^{(k)}\), \(V_n^{(0)}\) remain as stated; and redefine \(V_n^{(k)} := \sqrt{n} \bigl( U_n(\phi^{(k)}) - \mu^k \bigr)\), because now \(\Erw[U_n(\phi^{(k)})] = \mu^k\). Then analogously to the original situation we have
\begin{align*}
\Ratio{k, n, p}{(n)} &= \frac{\exp\Bigl( m_p + \frac{V_n^{(0)}}{\sqrt{n}} \Bigr)}{\Bigl( \mu^k + \frac{V_n^{(k)}}{\sqrt{n}} \Bigr)^{1/(k p)}} = \frac{\ee^{m_p}}{\mu^{1/p}} \, \frac{\exp\Bigl( \frac{V_n^{(0)}}{\sqrt{n}} \Bigr)}{\Bigl( 1 + \frac{V_n^{(k)}}{\mu^k \sqrt{n}} \Bigr)^{1/(k p)}}\\
&= \frac{\ee^{m_p}}{\mu^{1/p}} \Biggl( 1 + \frac{V_n^{(0)}}{\sqrt{n}} - \frac{V_n^{(k)}}{k p \mu^k \sqrt{n}} + R\Biggl( \frac{V_n^{(0)}}{\sqrt{n}}, \frac{V_n^{(k)}}{\mu^k \sqrt{n}} \Biggr) \Biggr),
\end{align*}
where \(R\) is the same as in Lemma~\ref{LemTaylorRatio} (including \(\delta\) and \(M\)), and therefore
\begin{equation*}
\sqrt{n} \Biggl( \frac{\mu^{1/p}}{\ee^{m_p}} \, \Ratio{k, n, p}{(n)} - 1 \Biggr) = V_n^{(0)} - \frac{V_n^{(k)}}{k p \mu^k} + \sqrt{n} R\Biggl( \frac{V_n^{(0)}}{\sqrt{n}}, \frac{V_n^{(k)}}{\mu^k \sqrt{n}} \Biggr).
\end{equation*}
By the CLT for U\=/statistics (Proposition~\ref{PropUStat}, \ref{USatCLT}), we know
\begin{equation*}
\begin{pmatrix} V_n^{(0)} \\ V_n^{(k)} \end{pmatrix} \KiVert{n \to \infty} \Normal(0, \Sigma)
\end{equation*}
with the covariance matrix
\begin{equation*}
\Sigma = \begin{pmatrix} \Var[\log\lvert X_1 \rvert] & k \mu^{k - 1} \cov[\log\lvert X_1 \rvert, \lvert X_1 \rvert^p] \\ k \mu^{k - 1} \cov[\log\lvert X_1 \rvert, \lvert X_1 \rvert^p] & k^2 \mu^{2 k - 2} \Var[\lvert X_1 \rvert^p] \end{pmatrix}.
\end{equation*}
From this we infer
\begin{equation*}
V_n^{(0)} - \frac{V_n^{(k)}}{k p \mu^k} \KiVert{n \to \infty} \Normal(0, s_p^2),
\end{equation*}
where \(s_p^2\) has been adjusted to
\begin{equation*}
s_p^2 = \Var\Biggl[ \log\lvert X_1 \rvert - \frac{\lvert X_1 \rvert^p}{p \mu} \Biggr].
\end{equation*}
Concerning the remainder term, recall that it satisfies \(\lvert R(x, y) \rvert \leq M \lVert (x, y) \rVert_2^2\) for all \((x, y) \in \R^2\) with \(\lVert (x, y) \rVert_2 \leq \sqrt{\delta}\); moreover \((V_n^{(0)}, V_n^{(k)})\) converges in distribution as \(n \to \infty\), and hence \(n^{1/4} \bigl( \frac{V_n^{(0)}}{\sqrt{n}}, \frac{V_n^{(k)}}{\mu^k \sqrt{n}} \bigr) = n^{-1/4} \bigl( V_n^{(0)}, \frac{V_n^{(k)}}{\mu^k} \bigr)\) converges to zero in distribution as \(n \to \infty\) per Slutsky's theorem, and hence also in probability. Thus, via Lemma~\ref{LemRestgliedNull} with \(l = 2\), we infer that \(\sqrt{n} R\bigl( \frac{V_n^{(0)}}{\sqrt{n}}, \frac{V_n^{(k)}}{\mu^k \sqrt{n}} \bigr)\) converges to zero in probability as \(n \to \infty\), and the proof is complete.
\end{proof}

% % % % % % % % % % % % % % % % % % % % % % % % %
\subsection{Proof of Theorem~\ref{ThmClt2}}
\label{SectionProofs2}
% % % % % % % % % % % % % % % % % % % % % % % % %

Recall that following the general observation at the beginning of Section~\ref{SectionProofs}, we may consider \(\Ratio{k_1, k_2, p}{(n)}(X_1, \dotsc, X_n)\) with \((X_i)_{i \in \N}\) a sequence of i.i.d.\ \(p\)\=/Gaussian random variables.

We start with Hoeffding's decomposition (Proposition~\ref{PropUStat}, \ref{UStatHdecomp}),
\begin{equation*}
(S_{k_1, p}^{(n)})^{k_1 p} = U_n(\phi^{(k_1)}) = 1 + \sum_{h = 1}^{k_1} \binom{k_1}{h} U_n(\phi_h^{(k_1)}),
\end{equation*}
with the kernel \(\phi^{(k_1)} \colon [0, \infty)^{k_1} \to \R\), \((x_i)_{i = 1}^{k_1} \mapsto \prod_{i = 1}^{k_1} \lvert x_i \rvert^p\), and analogously for \(k_2\), so also in the sequel. Via induction on the parameter \(h\), one shows
\begin{equation*}
\phi_h^{(k_1)}(x_1, \dotsc, x_h) = \prod_{i = 1}^h (\lvert x_i \rvert^p - 1);
\end{equation*}
in particular, this implies
\begin{equation*}
U_n(\phi_h^{(k_1)}) = U_n(\phi_h^{(k_2)}) = \binom{n}{h}^{-1} \sum_{i \in J_k^{(n)}} \prod_{j = 1}^h (\lvert X_{i_j} \rvert^p - 1).
\end{equation*}
We also know from Hoeffding's decomposition that \(\Var[U_n(\phi_h^{(k_1)})]\) is of order \(n^{-h}\). In what follows, we abbreviate \(Y_i := \lvert X_i \rvert^p - 1\) for \(i \in \N\); these random variables are independent and identically distributed, and \(\Erw[Y_1] = 0\) and \(\Erw[Y_1^2] = \Var[Y_1] = p\). For our purposes it suffices to take \(h \in \{1, 2, 3\}\) into account; in these cases the U\=/statistics can be written in terms of sums of powers, namely,
\begin{equation}\label{EqU1U2U3}
\begin{split}
U_n(\phi_1^{(k_1)}) &= \frac{1}{n} \sum_{i = 1}^n Y_i,\\
U_n(\phi_2^{(k_1)}) &= \frac{1}{n (n - 1)} \Biggl( \biggl( \sum_{i = 1}^n Y_i \biggr)^2 - \sum_{i = 1}^n Y_i^2 \Biggr),\\
U_n(\phi_3^{(k_1)}) &= \frac{1}{n (n - 1) (n - 2)} \Biggl( \biggl( \sum_{i = 1}^n Y_i \biggr)^3 - 3 \sum_{i = 1}^n Y_i \sum_{i = 1}^n Y_i^2 + 2 \sum_{i = 1}^n Y_i^3 \Biggr).
\end{split}
\end{equation}
The multivariate CLT yields
\begin{equation}\label{EqZGSZ1Z2}
\left( \begin{pmatrix} Z_n^{(1)} \\ Z_n^{(2)} \end{pmatrix} \right)_{n \in \N} := \left( \frac{1}{\sqrt{n}} \sum_{i = 1}^n \begin{pmatrix} Y_i \\ Y_i^2 - p \end{pmatrix} \right)_{n \in \N} \KiVert{} \begin{pmatrix} N_1 \\ N_2 \end{pmatrix} \sim \Normal\left( \begin{pmatrix} 0 \\ 0 \end{pmatrix}, \begin{pmatrix} p & c_{1, 2} \\ c_{1, 2} & v_2 \end{pmatrix} \right),
\end{equation}
where we additionally have introduced \(c_{1, 2} := \cov[Y_1, Y_1^2]\) and \(v_2 := \Var[Y_1^2]\). Plugging this into the representations~\eqref{EqU1U2U3} of the \(U_n(\phi_h^{(k_1)})\) leads to
\begin{equation}\label{EqV1V2V3}
\begin{split}
V_n^{(1)} &:= \sqrt{n} \, U_n(\phi_1^{(k_1)}) = Z_n^{(1)},\\
V_n^{(2)} &:= n U_n(\phi_2^{(k_1)}) = \frac{n}{n - 1} \Bigl( (Z_n^{(1)})^2 - p - \frac{Z_n^{(2)}}{\sqrt{n}} \Bigr),\\
V_n^{(3)} &:= n^{3/2} \, U_n(\phi_3^{(k_1)}) = \frac{n^2}{(n - 1) (n - 2)} \biggl( (Z_n^{(1)})^3 - 3 Z_n^{(1)} \Bigl( p + \frac{Z_n^{(2)}}{\sqrt{n}} \Bigr) + \frac{2}{n^{3/2}} \sum_{i = 1}^n Y_i^3 \biggr),\\
V_n^{(4, i)} &:= n^2 \sum_{h = 4}^{k_i} \binom{k_i}{h} U_n(\phi_h^{(k_i)}).
\end{split}
\end{equation}
With Slutsky's theorem, we get \((n^{-1/2} \, Z_n^{(2)})_{n \in \N} \to 0\) in probability, according to the strong law of large numbers \(\bigl( n^{-1} \sum_{i = 1}^n Y_i^3 \bigr)_{n \in \N} \to \Erw[Y_1^3]\) a.s.~and hence \(\bigl( n^{-3/2} \sum_{i = 1}^n Y_i^3 \bigr)_{n \geq 1} \to 0\) a.s., and therefrom and from~\eqref{EqZGSZ1Z2} we conclude
\begin{equation}\label{EqZGSV1V2V3}
\left( \begin{pmatrix} V_n^{(1)} \\ V_n^{(2)} \\ V_n^{(3)} \\ V_n^{(4, i)} \end{pmatrix} \right)_{n \geq 4} \KiVert{} \begin{pmatrix} N_1 \\ N_1^2 - p \\ N_1^3 - 3 p \, N_1 \\ H(N_1) \end{pmatrix},
\end{equation}
where the convergence of \((V_n^{(4, i)})_{n \geq k_2}\) is argued with Lee~\cite[Section~3.2.3]{leeUStatistics}; see there for further details, especially regarding the function \(H\). \(N_2\) is going to be needed later.

Returning to Hoeffding's decomposition we now have
\begin{equation*}
(S_{k_1, p}^{(n)})^{k_1 p} = 1 + k_1 \, \frac{V_n^{(1)}}{\sqrt{n}} + \binom{k_1}{2} \frac{V_n^{(2)}}{n} + \binom{k_1}{3} \frac{V_n^{(3)}}{n^{3/2}} + \frac{V_n^{(4, i)}}{n^2}.
\end{equation*}
Consequently, everything after the constant term converges to zero in probability, and we are going to apply Taylor series expansion, keeping explicit track only of terms \(\xi_n\) for which \(\lim_{n \to \infty} n^{3/2} \xi_n = 0\) in probability does \emph{not} hold true.

Recall that for any \(\alpha \in \R\), we have
\begin{equation}\label{EqBinomReihe}
(1 + x)^\alpha = 1 + \alpha x + \frac{\alpha (\alpha - 1)}{2} \, x^2 + \frac{\alpha (\alpha - 1) (\alpha - 2)}{6} \, x^3 + \rho(x),
\end{equation}
where there exist \(M, \delta \in \R_+\) such that the remainder term satisfies \(\lvert \rho(x) \rvert \leq M x^4\) for all \(x \in \R\) with \(\lvert x \rvert \leq \delta\). In our case, \(x_n := k_1 \, \frac{V_n^{(1)}}{\sqrt{n}} + \binom{k_1}{2} \frac{V_n^{(2)}}{n} + \binom{k_1}{3} \frac{V_n^{(3)}}{n^{3/2}} + \frac{V_n^{(4, i)}}{n^2}\), so \(\Var[x_n] \leq \frac{C}{n}\) for all \(n \geq k_2\) with some constant \(C\in(0,\infty)\). Therefore, \(\Var[n^{3/8} x_n] \leq C n^{-1/4}\) and so by Chebyshev's inequality \((n^{3/8} x_n)_{n \geq k_2}\) converges to zero in probability, and hence via Lemma~\ref{LemRestgliedNull} (with \(l = 4\)) \((n^{3/2} \rho(x_n))_{n \geq k_2}\) converges to zero in probability. Also, after plugging \(x_n\) into~\eqref{EqBinomReihe} and expanding the powers, we get terms of the shape \(\frac{(V_n^{(i)})^a (V_n^{(j)})^b}{n^c}\), and since \(n^{3/2} \frac{(V_n^{(i)})^a (V_n^{(j)})^b}{n^c} = n^{3/2 - c} (V_n^{(i)})^a (V_n^{(j)})^b\) converges to zero in probability if \(c \geq 2\), we collect the latter together with the remainder term in a single variable, that is,
\begin{align*}
(S_{k_1, p}^{(n)})^{\pm 1} &= \Biggl( 1 + k_1 \, \frac{V_n^{(1)}}{\sqrt{n}} + \binom{k_1}{2} \frac{V_n^{(2)}}{n} + \binom{k_1}{3} \frac{V_n^{(3)}}{n^{3/2}} + \frac{V_n^{(4, i)}}{n^2} \Biggr)^{\pm 1/(k_1 p)}\\
&= 1 \pm \frac{1}{k_1 p} \Biggl( k_1 \, \frac{V_n^{(1)}}{\sqrt{n}} + \binom{k_1}{2} \frac{V_n^{(2)}}{n} + \binom{k_1}{3} \frac{V_n^{(3)}}{n^{3/2}} \Biggr)\\
&\quad + \frac{1 \mp k_1 p}{2 k_1^2 p^2} \Biggl( \frac{k_1^2 (V_n^{(1)})^2}{n} + k_1^2 (k_1 - 1) \frac{V_n^{(1)} V_n^{(2)}}{n^{3/2}} \Biggr)\\
&\quad \pm \frac{(1 \mp k_1 p) (1 \mp 2 k_1 p)}{6 k_1^3 p^3} \, \frac{k_1^3 (V_n^{(1)})^3}{n^{3/2}} + R_1\\
&= 1 \pm \frac{V_n^{(1)}}{p \sqrt{n}} \pm \frac{(k_1 - 1) V_n^{(2)}}{2 p n} \pm \frac{(k_1 - 1) (k_1 - 2) V_n^{(3)}}{6 p n^{3/2}}\\
&\quad + \frac{(1 \mp k_1 p) (V_n^{(1)})^2}{2 p^2 n} + \frac{(1 \mp k_1 p) (k_1 - 1) V_n^{(1)} V_n^{(2)}}{2 p^2 n^{3/2}}\\
&\quad \pm \frac{(1 \mp k_1 p) (1 \mp 2 k_1 p) (V_n^{(1)})^3}{6 p^3 n^{3/2}} + R_1,
\end{align*}
where \((n^{3/2} R_1)_{n \geq k_2} \to 0\) in probability; we also have an analogous expansion w.r.t.\ \(k_2\).

With all this at our disposal, we arrive at
\begin{align*}
\Ratio{k_1, k_2, p}{(n)} &= \Biggl( 1 + \frac{V_n^{(1)}}{p \sqrt{n}} + \frac{(k_2 - 1) V_n^{(2)}}{2 p n} + \frac{(k_2 - 1) (k_2 - 2) V_n^{(3)}}{6 p n^{3/2}}\\
&\qquad + \frac{(1 - k_2 p) (V_n^{(1)})^2}{2 p^2 n} + \frac{(1 - k_2 p) (k_2 - 1) V_n^{(1)} V_n^{(2)}}{2 p^2 n^{3/2}}\\
&\qquad + \frac{(1 - k_2 p) (1 - 2 k_2 p) (V_n^{(1)})^3}{6 p^3 n^{3/2}} + R_2 \Biggr)\\
&\quad \cdot \Biggl( 1 - \frac{V_n^{(1)}}{p \sqrt{n}} - \frac{(k_1 - 1) V_n^{(2)}}{2 p n} - \frac{(k_1 - 1) (k_1 - 2) V_n^{(3)}}{6 p n^{3/2}}\\
&\qquad + \frac{(1 + k_1 p) (V_n^{(1)})^2}{2 p^2 n} + \frac{(1 + k_1 p) (k_1 - 1) V_n^{(1)} V_n^{(2)}}{2 p^2 n^{3/2}}\\
&\qquad - \frac{(1 + k_1 p) (1 + 2 k_1 p) (V_n^{(1)})^3}{6 p^3 n^{3/2}} + R_1 \Biggr)\\
&= 1 - \frac{(k_2 - k_1) (V_n^{(1)})^2}{2 p n} + \frac{(k_2 - k_1) V_n^{(2)}}{2 p n} + \frac{(k_2 - k_1) (k_1 + k_2) (V_n^{(1)})^3}{3 p n^{3/2}}\\
&\quad - \frac{(k_2 - k_1) (k_1 + k_2 - 1) V_n^{(1)} V_n^{(2)}}{2 p n^{3/2}} + \frac{(k_2 - k_1) (k_1 + k_2 - 3) V_n^{(3)}}{6 p n^{3/2}} + R_3,
\end{align*}
where \(R_3\) collects all terms \(\xi_n\) such that \(\lim_{n \to \infty} n^{3/2} \xi_n = 0\) in probability, thus also \(\lim_{n \to \infty} n^{3/2} R_3 = 0\) in probability. We rearrange further,
\begin{align*}
\frac{n p}{k_2 - k_1} (\Ratio{k_1, k_2, p}{(n)} - 1) &= \frac{V_n^{(2)} - (V_n^{(1)})^2}{2} + \frac{(k_1 + k_2) (V_n ^{(1)})^3}{3 \sqrt{n}}\\
&\quad - \frac{(k_1 + k_2 - 1) V_n^{(1)} V_n^{(2)}}{2 \sqrt{n}} + \frac{(k_1 + k_2 - 3) V_n^{(3)}}{6 \sqrt{n}} + R_4,
\end{align*}
where now \(\lim_{n \to \infty} n^{1/2} R_4 = 0\) in probability. At this point we see \(\bigl( V_n^{(2)} - (V_n^{(1)})^2 \bigr)_{n \geq k_2} \KiVert{} -p\) from~\eqref{EqZGSV1V2V3}, and that all the other terms, when multiplied with \(\sqrt{n}\), converge to some random variable in distribution. Therefore we have to ensure that \(\bigl( \sqrt{n} \bigl( V_n^{(2)} - (V_n^{(1)})^2 + p \bigr) \bigr)_{n \geq k_2}\) converges in distribution, and we have to determine its limit. Now by both~\eqref{EqV1V2V3} and~\eqref{EqZGSZ1Z2}
\begin{align*}
\sqrt{n} \Bigl( V_n^{(2)} - (V_n^{(1)})^2 + p \Bigr) &= \sqrt{n} \Bigl( \frac{n}{n - 1} \Bigl( (Z_n^{(1)})^2 - p - \frac{Z_n^{(2)}}{\sqrt{n}} \Bigr) - (Z_n^{(1)})^2 + p \Bigr)\\
&= \frac{\sqrt{n}}{n - 1} \Bigl( (Z_n^{(1)})^2 - p \Bigr) - \frac{n Z_n^{(2)}}{n - 1}\\
&\KiVert{n \to \infty} -N_2.
\end{align*}
Thus, together with~\eqref{EqZGSV1V2V3}, we arrive at
\begin{align*}
\sqrt{n} \Bigl( \frac{n p}{k_2 - k_1} \bigl( \Ratio{k_1, k_2, p}{(n)} - 1 \bigr) + \frac{p}{2} \Bigr) &= \frac{\sqrt{n}}{2} \Bigl( V_n^{(2)} - (V_n^{(1)})^2 + p \Bigr) + \frac{(k_1 + k_2) (V_n^{(1)})^3}{3}\\
&\quad - \frac{(k_1 + k_2 - 1) V_n^{(1)} V_n^{(2)}}{2} + \frac{(k_1 + k_2 - 3) V_n^{(3)}}{6} + \sqrt{n} R_4\\
&\KiVert{n \to \infty} -\frac{N_2}{2} + \frac{(k_1 + k_2) N_1^3}{3} - \frac{(k_1 + k_2 - 1) N_1 (N_1^2 - p)}{2}\\
&\quad\qquad + \frac{(k_1 + k_2 - 3) (N_1^3 - 3 p N_1)}{6}\\
&= p N_1 - \frac{N_2}{2}.
\end{align*}
A routine calculation confirms that the limit has the claimed distribution. \qed

\begin{rem}
A closer inspection of the proof reveals that the statement can be generalized to such sequences \((X_i)_{i \in \N}\) of i.i.d.\ random variables that \(\Erw[\lvert X_1 \rvert^{4 p}] < \infty\); then, calling \(m := \Erw[\lvert X_1 \rvert^p]\) and \(v := \Var[\lvert X_1 \rvert^p]\), one can show
\begin{equation*}
\sqrt{n} \Biggl( \frac{n p}{k_2 - k_1} \bigl( \Ratio{k_1, k_2, p}{(n)} - 1 \bigr) + \frac{v}{2 m^2} \Biggr) \KiVert{n \to \infty} \Normal(0, \rho^2),
\end{equation*}
where one has
\begin{equation*}
\rho^2 = \frac{1}{4} \Var\Biggl[ \Biggl( \frac{\lvert X_1 \rvert^p - m}{m} - \frac{v}{m^2} \Biggr)^2 \Biggr].
\end{equation*}
The reader is invited to check that \(\rho\) is zero iff \(\lvert X_1 \rvert\) almost surely takes at most two values \(x_1\), \(x_2\); in the case \(x_1 \neq x_2\) under the condition \(\prb[\lvert X_1 \rvert = x_1] = \frac{x_2^p}{x_1^p + x_2^p}\).
\end{rem}

\subsection{Proofs of the corollaries}

\begin{proof}[Proof of Corollary~\ref{cor:CLT1}.]
Adopting the notation of Theorem~\ref{ThmClt1} we have
\begin{equation*}
\sqrt{n} \Bigl( \ee^{-m_p} \Ratio{k, n, p}{(n)} - 1 \Bigr) \KiVert{n \to \infty} Z,
\end{equation*}
and because the normal CDF \(\Phi\) is continuous everywhere, this implies, for any \(t \in \R\),
\begin{equation*}
\lim_{n \to \infty} \prb\Bigl[ \sqrt{n} \Bigl( \ee^{-m_p} \Ratio{k, n, p}{(n)} - 1 \Bigr) \geq t \Bigr] = \prb[Z \geq t] = 1 - \Phi\Bigl( \frac{t}{s_p} \Bigr).
\end{equation*}
In particular for \(t = 0\) this becomes, after some rearrangement,
\begin{equation*}
\lim_{n \to \infty} \prb\bigl[ \Ratio{k, n, p}{(n)} \geq \ee^{m_p} \bigr] = 1 - \Phi(0) = \frac{1}{2}.
\end{equation*}
Writing out \(\Ratio{k, n, p}{(n)}\) (see Equation~\eqref{EqDefRatioEulerMclaurin}) yields the claimed formula.
\end{proof}

\begin{proof}[Proof of Corollary~\ref{cor:CLT2}.]
From Theorem~\ref{ThmClt2} we have
\begin{equation*}
\sqrt{n} \biggl( \frac{n p}{k_2 - k_1} \Bigl( \Ratio{k_1, k_2, p}{(n)} - 1 \Bigr) + \frac{p}{2} \biggr) \KiVert{n \to \infty} Z,
\end{equation*}
and because the CDF of \(Z\) is continuous everywhere we even have uniform convergence of the CDFs, or also of the tail probabilities, that is,
\begin{equation*}
\lim_{n \to \infty} \sup_{t \in \R} \Biggl\lvert \prb\biggl[ \sqrt{n} \biggl( \frac{n p}{k_2 - k_1} \Bigl( \Ratio{k_1, k_2, p}{(n)} - 1 \Bigr) + \frac{p}{2} \biggr) \geq t \biggr] - \Bigl( 1 - \Phi\Bigl( \frac{t}{\rho_p} \Bigr) \Bigr) \Biggr\rvert = 0.
\end{equation*}
This implies that, for any \(t \in \R \cup \{\pm\infty\}\) and any sequence \((t_n)_{n \in \N} \in \R^\N\) with \(\lim_{n \to \infty} t_n = t\) we have
\begin{equation*}
\lim_{n \to \infty} \prb\biggl[ \sqrt{n} \biggl( \frac{n p}{k_2 - k_1} \Bigl( \Ratio{k_1, k_2, p}{(n)} - 1 \Bigr) + \frac{p}{2} \biggr) \geq t_n \biggr] = 1 - \Phi\Bigl( \frac{t}{\rho_p} \Bigr),
\end{equation*}
where we interpret \(\Phi(-\infty) := 0\) and \(\Phi(\infty) := 1\). Now choose \(t_n := -n\), then we see
\begin{equation*}
\lim_{n \to \infty} \prb\biggl[ \sqrt{n} \biggl( \frac{n p}{k_2 - k_1} \Bigl( \Ratio{k_1, k_2, p}{(n)} - 1 \Bigr) + \frac{p}{2} \biggr) \geq -n \biggr] = 1 - \Phi\Bigl( -\frac{\infty}{\rho_p} \Bigr) = 1.
\end{equation*}
On the left\-/hand side notice
\begin{equation*}
\prb\biggl[ \sqrt{n} \biggl( \frac{n p}{k_2 - k_1} \Bigl( \Ratio{k_1, k_2, p}{(n)} - 1 \Bigr) + \frac{p}{2} \biggr) \geq -n \biggr] = \prb\biggl[ \Ratio{k_1, k_2, p}{(n)} \geq 1 - \frac{k_2 - k_1}{p \sqrt{n}} - \frac{k_2 - k_1}{2 n} \biggr];
\end{equation*}
now let \(c \in (0, 1)\), then \(1 - \frac{k_2 - k_1}{p \sqrt{n}} - \frac{k_2 - k_1}{2 n} \geq c\) for eventually all \(n\), and hence eventually
\begin{equation*}
\prb\biggl[ \sqrt{n} \biggl( \frac{n p}{k_2 - k_1} \Bigl( \Ratio{k_1, k_2, p}{(n)} - 1 \Bigr) + \frac{p}{2} \biggr) \geq -n \biggr] \leq \prb\bigl[ \Ratio{k_1, k_2, p}{(n)} \geq c \bigr] \leq 1.
\end{equation*}
Letting \(n \to \infty\) and writing out \(\Ratio{k_1, k_2, p}{(n)}\) concludes the proof.
\end{proof}

% % % % % % % % % % % % % % % % % % % % % % % % %
% % % % % % % % % % % % % % % % % % % % % % % % %
\section*{Acknowledgement}
LF was supported by the Austrian Science Fund (FWF), projects P-32405
and P-35322. JP is supported by the German Research Foundation (DFG) under project
516672205 and by the Austrian Science Fund (FWF) under project P-32405.
% % % % % % % % % % % % % % % % % % % % % % % % %
% % % % % % % % % % % % % % % % % % % % % % % % %

\bibliographystyle{plain}
\bibliography{Mclaurin_bib.bib}

\end{document}